\documentclass[11pt]{article}

\usepackage{amsmath,amssymb,amsthm,vmargin}
\usepackage{graphicx}
\usepackage{hyperref}
\usepackage{authblk}

\theoremstyle{theorem}

\newtheorem{theorem}{Theorem}
\newtheorem{lemma}[theorem]{Lemma}
\newtheorem{proposition}[theorem]{Proposition}
\newtheorem{corollary}[theorem]{Corollary}
\theoremstyle{definition}
\newtheorem{definition}[theorem]{Definition}
\newtheorem{assumption}[theorem]{Assumption}
\newtheorem{remark}[theorem]{Remark}

\numberwithin{equation}{section}
\numberwithin{theorem}{section}
%\setcounter{tocdepth}{2}

%\def\Mm#1{\mbox{\boldmath$\scriptstyle #1$\unboldmath}}
                                       % Math-bold in subscript
%\def\MM#1{\mbox{\boldmath$#1$\unboldmath}}
                                       % Math-bold

%\DeclareMathOperator{\divis}{div}
\DeclareMathOperator{\diag}{diag}

\newcommand{\ds}{\displaystyle}

\let\Re\undefined
\let\Im\undefined
\DeclareMathOperator{\Re}{Re}
\DeclareMathOperator{\Im}{Im}

\title{Asymptotics of matrix valued orthogonal  polynomials on $[-1,1]$}

%\author[1]{Alfredo Dea\~{n}o}
%\author[2]{Arno B. J. Kuijlaars}
%\author[3]{Pablo Rom\'an}
%
%\affil[1]{Department of Mathematics, Universidad Carlos III de Madrid, Spain}
%\affil[2]{Department of Mathematics,  Katholieke Universiteit Leuven, Leuven, Belgium}
%\affil[3]{FaMAF-CIEM, Universidad Nacional de C\'ordoba, Argentina}

%\emailAdd{alfredo.deanho@uc3m.es, arno.kuijlaars@kuleuven.be, pablo.roman@unc.edu.ar.}

%
\author[1]{Alfredo Dea\~{n}o\thanks{alfredo.deanho@uc3m.es}}
\author[2]{Arno B.J. Kuijlaars\thanks{arno.kuijlaars@kuleuven.be}}
\author[3]{Pablo Rom\'an\thanks{pablo.roman@unc.edu.ar}}
\affil[1]{Department of Mathematics, Universidad Carlos III de Madrid, Spain}
\affil[2]{Department of Mathematics,  Katholieke Universiteit Leuven, Leuven, Belgium}
\affil[3]{FaMAF-CIEM, Universidad Nacional de C\'ordoba, Argentina}

\begin{document}
\maketitle

%\abstract{We analyze the large degree asymptotic behavior of matrix valued orthogonal polynomials (MVOPs) $P_n(z)$ defined on the interval $[-1,1]$. Using the Riemann--Hilbert formulation for MVOPs and the Deift--Zhou method of steepest descent, we obtain asymptotic expansions for $P_n(z)$ as $n\to\infty$, for $z$ in different regions of $\mathbb{C}$ (outside the interval of orthogonality, on the interval away from the endpoints and in neighborhoods of $z=\pm 1$), as well as for the matrix coefficients in the three-term recurrence relation for these MVOPs. The analysis follows the work of Kuijlaars, McLaughlin, Van Assche and Vanlessen \cite{KMVV04} on scalar Jacobi-type orthogonal polynomials, but it also requires  several different factorizations of the matrix part of the weight, in terms of eigenvalues/eigenvectors and using a matrix Szeg\H{o} function. We illustrate the results with two main examples, MVOPs of Jacobi and Gegenbauer type, coming from group theory.}

\abstract{We analyze the large degree asymptotic behavior of matrix valued orthogonal polynomials (MVOPs), with a weight that consists of a Jacobi scalar factor and a matrix part. Using the Riemann--Hilbert formulation for MVOPs and the Deift--Zhou method of steepest descent, we obtain asymptotic expansions for the MVOPs as the degree tends to infinity, in different regions of the complex plane (outside the interval of orthogonality, on the interval away from the endpoints and in neighborhoods of the endpoints), as well as for the matrix coefficients in the three-term recurrence relation for these MVOPs. The asymptotic analysis follows the work of Kuijlaars, McLaughlin, Van Assche and Vanlessen on scalar Jacobi-type orthogonal polynomials, but it also requires  several different factorizations of the matrix part of the weight, in terms of eigenvalues/eigenvectors and using a matrix Szeg\H{o} function. We illustrate the results with two main examples, MVOPs of Jacobi and Gegenbauer type, coming from group theory.}

\section{Introduction and statement of results} \label{sec1}

\subsection{Introduction}

In this paper, we are interested in the large degree asymptotic behavior of matrix valued orthogonal
polynomials (MVOPs), with orthogonality defined on $[-1,1]$. The weight
matrix $W$ on $[-1,1]$ is of size $r \times r$, and we take
it of the form
\begin{equation} \label{Wdef}
	W(x) = (1-x)^{\alpha} (1+x)^{\beta} H(x) \end{equation}
with $\alpha, \beta > -1$ and where the matrix valued function 
$H(x)$ satisfies the following: 
\begin{assumption} \label{assumptions}
	\begin{itemize}
		\item[(a)] $H(x)$ is an $r \times r$ complex valued matrix for $x \in [-1,1]$,
		\item[(b)] $H(x)$ is Hermitian positive definite for $x \in (-1,1)$,
		\item[(c)] $H(x)$ is real analytic on $[-1,1]$,
		\item[(d)] $H(-1)$ and $H(1)$ are
		not identically zero. 
	\end{itemize}
\end{assumption}
The real analyticity means that $H$ has an analytic extension 
to a neighborhood  of $[-1,1]$ in the
complex plane that we will also denote by $H$.
The requirement in (b) is that $H(x)$ is a Hermitian matrix,
i.e., $H(x) = H(x)^\ast$, for every $x \in (-1,1)$, with positive
eigenvalues. Then by real analyticity $H(-1)$ and $H(1)$
are Hermitian non-negative definite, but not necessarily
positive definite, as some of the eigenvalues could vanish at $\pm 1$.
However, not all eigenvalues can vanish because of the 
requirement in (d). 
In our examples the matrix valued function $H$ is polynomial in $x$,
and $H(\pm 1)$ will be singular.

If $H(x)$ is a diagonal matrix for every $x \in [-1,1]$, then the
MVOPs reduce to $r$ usual scalar orthogonal polynomials with
weight functions of the type $w(x)=(1-x)^{\alpha} (1+x)^{\beta} h(x)$, 
where $h(x)$ is analytic in a neighborhood of $[-1,1]$.
Strong asymptotics for these kind of orthogonal polynomials
was obtained with Riemann-Hilbert methods by Kuijlaars, McLaughlin,
Van Assche and Vanlessen in \cite{KMVV04}, and the
present paper can be viewed as a matrix valued extension 
of that work.

The monic MVOP $P_n$ is  defined by the property that for $m,n\geq 0$,
\begin{equation} \label{MVOPdef}
	\int_{-1}^1 P_n(x) W(x) P_m(x)^\ast dx = \delta_{n,m} \Gamma_n
\end{equation}
with a positive definite matrix $\Gamma_n$,
where $P_n(x) = x^n I_r + \cdots$ is a matrix valued
polynomial of degree $n$ whose leading coefficient
is the identity matrix $I_r$. The integral in \eqref{MVOPdef} 
is taken entrywise.
Under Assumption \ref{assumptions},
existence and uniqueness of the sequence $(P_n)_n$ is guaranteed.

Matrix orthogonal polynomials have appeared in many different contexts in the literature in the last years. Following classical ideas in the scalar case, Dur\'an and Gr\"{u}nbaum in \cite{DG05a,DG05b} studied MVOPs from the perspective of eigenfunctions of second order differential operators with matrix coefficients. This work has produced a large number of contributions in the literature, extending classical identities for scalar OPs to the matrix case. A general analysis of the matrix Bochner problem (the classification of $N\times N$ weight matrices whose associated MVOPs are eigenfunctions of a second order differential operator) has been recently addressed by Casper and Yakimov in \cite{CY22}, using techniques from noncommutative algebra. 

%From the viewpoint of integrable systems and bispectrality, MVOPs have been studied as solutions of since the work of  , 

%F Alberto Grünbaum, Inés Pacharoni, Ignacio Zurrián, Bispectrality and Time–Band Limiting: Matrix-valued Polynomials, International Mathematics Research Notices, Volume 2020, Issue 13, July 2020, Pages 4016–4036, 

From the point of view of group theory and representation theory, the study of matrix valued spherical functions has led to families of MVOPs associated to compact symmetric spaces. The first example of this connection is given by Gr\"{u}nbaum, Pacharoni and Tirao in \cite{GPT02} for the symmetric pair $(G, K) = (\mathrm{SU}(3),\mathrm{U}(2))$, see also \cite{PT07,PT13,RT06}. Another approach was
developed in \cite{KvPR12, KvPR13} for the $(\mathrm{SU}(2) \times \mathrm{SU}(2), \diag)$, and later extended to 
%a general set-also [24,42,43] for 
a more general set-up in the context of the so-called multiplicity free pairs. %, see also also [24,42,43].
In particular, \cite{KdlRR17} gives a detailed study of the Gegenbauer matrix valued orthogonal
polynomials, which can be considered as matrix valued analogues of the Chebyshev
polynomials, i.e., the spherical polynomials on $(\textrm{SU}(2) \times \textrm{SU}(2), \diag)$, better known as the characters on SU(2), see also \cite{AKR17} for the quantum group case.

The Riemann--Hilbert formulation for MVOPs appears in the works of Gr\"{u}nbaum, de la Iglesia and Mart\'inez-Finkelshtein \cite{GIM11}, and Cassatella-Contra and Ma\~{n}as \cite{CM12}, as a generalization of the classical result of Fokas, Its and Kitaev \cite{FIK92}. 
This formulation has been used in several examples, like Hermite and Laguerre--type MVOPs in \cite{CdI2014,CdI2018} or matrix
biorthogonal polynomials in \cite{BFM21,BFFM22},
in order to obtain algebraic and differential identities for MVOPs that can be seen as non-commutative analogues of well known identities in the theory of integrable systems, such as the Toda lattice equation or Painlev\'e equations.  

Asymptotic results for MVOPs obtained from the Riemann--Hilbert formulation using the Deift--Zhou method \cite{DZ93} 
of steepest descent are much more scarce. In the last few years, MVOPs have appeared in the area of integrable probability, 
more precisely in the study of random tilings of plane figures. We mention the recent work by Duits and Kuijlaars \cite{DK21} 
and Berggren and Duits \cite{BD19}
on periodic tilings of the Aztec diamond, as well as the papers by Charlier \cite{Ch21} and by 
Groot and Kuijlaars \cite{GK21} on doubly periodic lozenge tilings of a hexagon. In these cases, an essential step 
in the asymptotic analysis is the connection between matrix orthogonality in the complex plane and scalar orthogonality 
on suitable curves in a Riemann surface. 
%The study of block Toeplitz matrices 

Our results are strong asymptotic formulas for $P_n(z)$ as $n \to \infty$, for $z$ in three regions in the complex plane,
namely in the exterior region $\mathbb C \setminus [-1,1]$, in the oscillatory region $(-1,1)$ 
away from the endpoints, and near the endpoints. An important aspect of this work is the fact that we use different factorizations of the weight matrix for the asymptotic analysis: in the outer region and on the interval $(-1,1)$, we use a matrix Szeg\H{o} function $D$, which is obtained from a matrix spectral factorization of the weight on the unit circle; in neighborhoods of the endpoints, we use the spectral decomposition of $W(x)$, since the possible vanishing of the eigenvalues at $z=\pm 1$ is essential in the construction of the local parametrices. The same methodology allows us to include asymptotic expansions for the recurrence coefficients as well.

Throughout we assume that $W$ is of the form \eqref{Wdef}
with $H$ satisfying Assumption \ref{assumptions}, and $P_n$
is the degree $n$  monic MVOP satisfying \eqref{MVOPdef}.
We use $A^T$ to denote the transpose of a matrix $A$ and $A^*$ for
its Hermitian transpose.
For a matrix valued function $A(z)$ defined for $z \in \mathbb C \setminus \Sigma$, where $\Sigma$ is an oriented contour, we use
$A_+(x)$ ($A_-(x)$) for the limits of $A(z)$ as $z \to x \in \Sigma$
from the $+$-side ($-$-side). The $+$-side ($-$-side) is on our left (right) as
we follow $\Sigma$ according to
its orientation. 

\subsection{Factorizations of the weight matrix}

Our asymptotic results rely on three factorizations
of the weight matrix.  
\subsubsection{First factorization}
The first one is the familiar
spectral decomposition of $H(x)$
\begin{equation} \label{HQLambda} 
	H(x) = Q(x) \Lambda(x) Q(x)^*, \qquad x \in [-1,1] \end{equation}
with a unitary matrix $Q(x)$ and a diagonal matrix
\begin{equation} \label{Lambdadef} \Lambda(x) = \diag\left(\lambda_1(x), \ldots, \lambda_r(x)\right) \end{equation}
containing the eigenvalues $\lambda_j(x)$, $j=1, \ldots, r$ of $H(x)$. 
The assumption that $H$ is real analytic on $[-1,1]$ 
has the following important consequence.

\begin{lemma} \label{Rellich}
	$Q(x)$ and $\Lambda(x)$ can (and will) be taken to be real analytic on $[-1,1]$.
\end{lemma}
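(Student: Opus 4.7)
The plan is to invoke Rellich's theorem on real analytic perturbation of Hermitian matrices. The argument naturally splits into a local step and a global step, with the main difficulty at points where eigenvalues of $H(x)$ coincide.

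First, at any $x_0 \in [-1,1]$ where $H(x_0)$ has $r$ pairwise distinct eigenvalues, I would use the holomorphic functional calculus. For each eigenvalue $\lambda_j(x_0)$, choose a small contour $\gamma_j$ encircling only $\lambda_j(x_0)$, and define the Riesz projector
$$P_j(x) = \frac{1}{2\pi i}\oint_{\gamma_j}(zI - H(x))^{-1}\,dz.$$
Since $H$ is real analytic, so is $P_j$ near $x_0$. Then $\lambda_j(x) = \Tr(H(x) P_j(x))$ is a simple analytic eigenvalue, and a unit eigenvector is obtained by the analytic normalization of $P_j(x) v_j$ for a suitable fixed vector $v_j$. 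This handles the generic case and produces real analytic $\Lambda$ and $Q$ on a neighborhood of $x_0$.

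The delicate step is extending across points where two or more eigenvalues of $H(x)$ collide. At such a point the group projector onto the coalescing eigenspace is still analytic, but it only delivers a block, not individual eigenvalue branches. Here the Hermitian hypothesis is essential: Rellich's theorem asserts that a real analytic one-parameter family of Hermitian matrices admits a real analytic choice of eigenvalues (transversal crossings being permitted) together with a real analytic orthonormal eigenvector frame. I would prove this by induction on $r$. For the inductive step one shows that the characteristic polynomial of $H(x)$ factors, in the analytic germs at $x_0$, into linear factors with real analytic coefficients, without fractional Puiseux branches; this uses that the roots are real and that in the Hermitian case the algebraic branching of eigenvalues that is possible for general analytic matrix families cannot occur. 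This yields one global real analytic eigenvalue-eigenvector pair $(\lambda_1(x), q_1(x))$ on $[-1,1]$. Restricting $H(x)$ to the analytically varying invariant subspace orthogonal to $q_1(x)$, given by the analytic projector $I - q_1(x)q_1(x)^*$, reduces to the case of size $r-1$, and the inductive hypothesis applies.

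Finally, combining the local analytic spectral data via a finite covering of the compact interval $[-1,1]$ together with analytic continuation of the global eigenvalue branches produces real analytic $\Lambda(x) = \diag(\lambda_1(x),\ldots,\lambda_r(x))$ and a real analytic unitary $Q(x)$ with $H(x) = Q(x)\Lambda(x)Q(x)^*$. The main obstacle in the whole argument is ruling out fractional branch points at eigenvalue collisions, which is exactly the content of Rellich's theorem and the only place where Hermiticity is used; everything else reduces to routine analytic functional calculus.
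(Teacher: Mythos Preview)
Your proposal is correct and takes essentially the same approach as the paper: both invoke Rellich's theorem on real analytic diagonalization of Hermitian matrix families. The paper's proof consists of a single sentence citing the result as a well-known theorem of Rellich (with references to Wimmer and Simon), whereas you go further and sketch the standard proof of Rellich's theorem itself via Riesz projectors, induction on $r$, and the absence of fractional Puiseux branches in the Hermitian case.
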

\begin{proof}
	This is a well-known theorem of Rellich, see \cite{Wim86} or \cite[Theorem 1.4.4]{Simon15}.
\end{proof}
We choose $Q(x)$ and $\Lambda(x)$ as in Lemma \ref{Rellich},
and we continue to use $Q$ and $\Lambda$ for their analytic
continuations to a neighborhood of $[-1,1]$ in the
complex plane. Then each eigenvalue $\lambda_j(x)$, $j=1, \ldots, r$
is analytic in that same neighborhood of $[-1,1]$, and it 
satisfies $\lambda_j(x) > 0$ for $x \in (-1,1)$ because
of Assumption \ref{assumptions} (b), but $\lambda_j(x)$
could be zero at $x = \pm 1$, since we do not assume
positive definiteness of $H$ at the endpoints. 

\begin{definition} \label{def13}
We define for $j=1, \ldots, r$,
\begin{enumerate}
	\item[(a)] $n_j$ is the order of vanishing of $\lambda_j(x)$
	at $x=1$, where we put $n_j = 0$ if $\lambda_j(1) > 0$, and
	\begin{equation} \label{alphajdef}
		\alpha_j = \alpha + n_j,
	\end{equation}
	\item[(b)] $m_j$ is the order of vanishing of $\lambda_j(x)$
	at $x=-1$, where we put $m_j = 0$ if $\lambda_j(-1) > 0$,
	and
	\begin{equation} \label{betajdef}
		\beta_j = \beta + m_j.
	\end{equation}
\end{enumerate} 
\end{definition} 
Because of Assumption \ref{assumptions} (d) at least one
of the numbers $n_1, \ldots, n_r$ is equal to zero, and similarly
for the $m_j$'s. Thus we have
\[ \min \{n_j \mid j=1, \ldots, r\} =
	\min \{m_j \mid j=1, \ldots, r\} = 0. \]
We emphasize that $\lambda_j(x)$, for $j=1, \ldots, r$ are
the eigenvalues of $H(x)$, and so  by \eqref{Wdef} 
the eigenvalues of $W(x)$ are  
$(1-x)^{\alpha} (1+x)^{\beta} \lambda_j(x)$ for $j=1, \ldots, r$. 

\subsubsection{Second factorization}
The second factorization of $W(x)$ is less familiar.
\begin{proposition} \label{prop12}
	There  exists an analytic matrix valued function 
		$D : \mathbb C \setminus [-1,1] \to \mathbb C^{r \times r}$  with boundary values
		$D_{\pm}$ on $(-1,1)$ satisfying
		\begin{equation} \label{WDfactor}
			W(x) = D_-(x) D_-(x)^\ast = D_+(x) D_+(x)^\ast,
		\end{equation}
		where $D(z)$ is invertible for every $z \in \mathbb C
		\setminus [-1,1]$, and such that 
		\begin{equation}\label{eq:Dinf}
			D(\infty) = \lim_{z \to \infty} D(z) 
		\end{equation}
		exists and is invertible as well.  
\end{proposition}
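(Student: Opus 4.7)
The plan is to pull the factorization problem back to the unit circle via the Joukowski conformal map and then invoke the classical matrix Szeg\H{o} (Wiener--Hopf canonical) factorization. Let $\varphi(z) = z + (z^{2}-1)^{1/2}$ denote the biholomorphism between $\mathbb{C}\setminus[-1,1]$ and the exterior of the unit disk $\{|w|>1\}$, with inverse $\psi(w) = \tfrac{1}{2}(w+w^{-1})$ and $\varphi(z)\sim 2z$ at infinity. The two pre-images of $x=\cos\theta\in(-1,1)$ on $\mathbb{T}=\{|w|=1\}$ are $e^{\pm i\theta}$, and with the standard orientation of $[-1,1]$ these correspond to the boundary limits $\varphi(x\pm i0)$ from the upper and lower half planes, respectively. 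I would then define $\widetilde W(w) := W(\psi(w))$ on $\mathbb{T}$, which is a Hermitian positive semi-definite matrix function, positive definite away from the two distinguished points $w=\pm 1$.

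Next I would verify the Szeg\H{o} condition $\log\det \widetilde W \in L^{1}(\mathbb{T})$. Since
\[
\det \widetilde W(e^{i\theta}) = (1-\cos\theta)^{r\alpha}(1+\cos\theta)^{r\beta}\det H(\cos\theta),
\]
and since by Definition \ref{def13} together with Assumption \ref{assumptions} the scalar function $\det H$ vanishes only to finite order at the endpoints (namely $\sum_{j} n_{j}$ at $x=1$ and $\sum_{j} m_{j}$ at $x=-1$) while being strictly positive on $(-1,1)$, every singularity of $\log\det \widetilde W$ on $\mathbb{T}$ is of logarithmic type; using $\alpha,\beta > -1$ these are integrable. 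The matrix Szeg\H{o} theorem (for example in the form presented in \cite{Simon15}) then produces a matrix function $\widetilde D$ that is analytic and everywhere invertible on $\{|w|>1\}$, whose non-tangential boundary values on $\mathbb{T}$ satisfy $\widetilde W(w) = \widetilde D(w)\widetilde D(w)^{*}$ a.e., and such that $\widetilde D(\infty)$ exists and is invertible.

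Finally I would set $D(z) := \widetilde D(\varphi(z))$ for $z\in\mathbb{C}\setminus[-1,1]$. Analyticity, pointwise invertibility, and the existence and invertibility of $D(\infty)=\widetilde D(\infty)$ all transfer directly from $\widetilde D$ via the biholomorphism $\varphi$. For $x=\cos\theta\in(-1,1)$ the boundary values are $D_{\pm}(x) = \widetilde D(e^{\pm i\theta})$, and therefore
\[
D_{\pm}(x)\,D_{\pm}(x)^{*} \;=\; \widetilde D(e^{\pm i\theta})\,\widetilde D(e^{\pm i\theta})^{*} \;=\; \widetilde W(e^{\pm i\theta}) \;=\; W(x),
\]
which is exactly \eqref{WDfactor}. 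The main obstacle is the matrix Szeg\H{o} factorization on the circle itself: producing an outer, pointwise invertible factor $\widetilde D$ with invertible limit at infinity is a nontrivial spectral/operator-theoretic input, and the sole hypothesis of that result that we have to check ourselves, the log-integrability of $\det \widetilde W$, is the computation above. Everything else in the argument is transport along $\varphi$.
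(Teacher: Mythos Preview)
Your proposal is correct and follows essentially the same route as the paper: transport the weight to the unit circle via the Joukowski map, verify the matrix Szeg\H{o} condition \eqref{Szcondition} (exactly your log-integrability computation), invoke the classical Wiener--Masani / Helson--Lowdenslager factorization \eqref{WGfactor}, and pull back. The only cosmetic difference is that the paper factors in the interior of the disk and sets $D(z)=G(1/\varphi(z))$, whereas you factor in the exterior and set $D(z)=\widetilde D(\varphi(z))$; these are related by $\widetilde D(w)=G(1/w)$ and amount to the same argument.
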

Proposition \ref{prop12} follows from Lemma \ref{lem:Ddef} below.

A similar factorization, but for weight matrices on the unit circle appeared in \cite{BD19}, 
in the study of correlation functions for determinantal processes involving infinite Toeplitz minors, 
which arise in random tilings of certain planar domains.

\begin{remark} \label{remark13}
	We consider $D(z)$ as a matrix valued Szeg\H{o} function.
	It arises from a matrix spectral factorization of the
	weight matrix $W$. It is unique up to a constant unitary
	matrix. That is, if $D$ satisfies the conditions 
	of Proposition \ref{prop12} and $U$ is a unitary matrix, independent of $z$, then $DU$
	satisfies the conditions as well. Uniqueness of the
	matrix valued Szeg\H{o} function is guaranteed if we
	require that $D(\infty)$ is a positive definite Hermitian matrix. 
	We call this the normalized matrix valued Szeg\H{o} function.

	If $W(x)$ is real valued for $x \in (-1,1)$, 
	then the normalized matrix valued Szeg\H{o} function
	$D$ will satisfy the symmetry condition
	\begin{equation} \label{Dreal} 
		D(\overline{z}) = \overline{D(z)}, \qquad z \in \mathbb C \setminus [-1,1]. \end{equation}
	In that case $D_-(x) = \overline{D_+(x)}$ and the factorization
	\eqref{WDfactor} can be alternatively written as
	\begin{equation} \label{WDfactor2}
		W(x) = D_-(x) D_+(x)^T = D_+(x) D_-(x)^T.
	\end{equation}
	Also $D(\infty)$ is a positive definite real matrix in this case.
\end{remark}

\subsubsection{Third factorization}
The third factorization is very much related to the spectral
decomposition \eqref{HQLambda}. 
We use modified eigenvalues
\begin{equation} \label{lambdajdef}
		\widetilde{\lambda}_j = (-1)^{n_j} \lambda_j,
			\qquad  j=1, \ldots, r,
\end{equation}
and
\begin{equation} \label{Lambdatildef}
		\widetilde{\Lambda} = \diag (\widetilde{\lambda}_1, \ldots, \widetilde{\lambda}_r ). 
	\end{equation}
Recall from Definition \ref{def13} that $n_j$ denotes the order of vanishing of $\lambda_j$
at $x=1$. Thus  $\widetilde{\lambda}_j(x) > 0$
for $x \in (1,1+\delta)$ for some $\delta > 0$,
and we use $\widetilde{\lambda}_j(x)^{1/2}$ to denote
its positive square root. This has an analytic continuation
to a neighborhood of $[-1,1]$ with a branch cut along 
$(-\infty,1]$ that we also denote by $\widetilde{\lambda}_j^{1/2}$.
Then we define
\begin{equation} \label{Lambdatilsqrtdef}
	\widetilde{\Lambda}^{1/2} = \diag \left(\widetilde{\lambda}_1^{1/2}, \ldots, \widetilde{\lambda}_r^{1/2} \right), 
\end{equation}
and
\begin{equation}  \label{Vdef}
	V(z) = (z-1)^{\alpha/2} (z+1)^{\beta/2} 
		Q(z) \widetilde{\Lambda}(z)^{1/2}. 
\end{equation}
which is defined and analytic with a branch cut along
$(-\infty,1]$. In particular it is defined and analytic
in $D(1,\delta) \setminus (1-\delta,1]$ for some $\delta >0$.

We will use $V$ for the local analysis around $1$. Near $-1$
we have a similarly defined matrix valued function.
We define
\begin{equation} \label{Lambdahatdef} \widehat{\Lambda} = \diag \left( \widehat{\lambda}_1,
	\ldots, \widehat{\lambda}_r\right), \quad
	\widehat{\lambda}_j = (-1)^{m_j} \lambda_j, \end{equation}
so that $\widehat{\lambda}_j(x) > 0$ for $x \in (-1-\delta, -1)$
for some $\delta > 0$. Then we define
\begin{equation} \label{Vhatdef}
		\widehat{V}(z) = (1-z)^{\alpha/2} (-1-z)^{\beta/2}
		Q(z) \widehat{\Lambda}^{1/2}(z),
	\end{equation}
defined with a branch cut along $[-1,\infty)$. 

The third factorization of $W$ is as follows:
\begin{lemma} We have for $x \in (-1,1)$,
	\begin{equation} \label{WVfactor} 
	\begin{aligned} 
	W(x) & = V_-(x) V_-(x)^\ast = V_+(x) V_+(x)^{\ast} \\
	& = \widehat{V}_-(x) \widehat{V}_-(x)^\ast = 
		\widehat{V}_+(x) \widehat{V}_+(x)^{\ast},
	\end{aligned}
\end{equation}
where $V$ and $\widehat{V}$ are defined by \eqref{Vdef} and \eqref{Vhatdef}.
\end{lemma}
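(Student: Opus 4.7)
The plan is to compute $V_\pm(x) V_\pm(x)^\ast$ directly from the definition \eqref{Vdef} and use the spectral decomposition \eqref{HQLambda} of $H$. The key observation is that all branch data on $(-1,1)$ enters only through squared moduli and hence drops out; the same argument then handles $\widehat{V}$ by the natural symmetry between the endpoints $1$ and $-1$.

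First I would factor $V_+(x) = c_+(x)\, Q(x)\, \widetilde{\Lambda}_+^{1/2}(x)$, where the scalar $c_+(x) = (z-1)_+^{\alpha/2}(z+1)_+^{\beta/2}$ carries the branch data from the explicit weight factors. Since $Q$ is analytic across $(-1,1)$ with $Q(x)$ unitary for real $x$, we have $Q_+(x) = Q_-(x) = Q(x)$ and
\[
V_+(x) V_+(x)^\ast = |c_+(x)|^2 \, Q(x)\, \widetilde{\Lambda}_+^{1/2}(x)\, \bigl(\widetilde{\Lambda}_+^{1/2}(x)\bigr)^\ast Q(x)^\ast.
\]
Then $|c_+(x)|^2 = (1-x)^\alpha(1+x)^\beta$, since any branch of $(z\mp 1)^{\gamma/2}$ has squared modulus $|x\mp 1|^\gamma$. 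By Definition \ref{def13} and \eqref{lambdajdef}, $|(\widetilde{\lambda}_j)_+^{1/2}(x)|^2 = |\widetilde{\lambda}_j(x)| = \lambda_j(x)$ on $(-1,1)$, so $\widetilde{\Lambda}_+^{1/2}(x)\bigl(\widetilde{\Lambda}_+^{1/2}(x)\bigr)^\ast = \Lambda(x)$. Substituting and applying \eqref{HQLambda} yields
\[
V_+(x) V_+(x)^\ast = (1-x)^\alpha(1+x)^\beta Q(x)\Lambda(x)Q(x)^\ast = W(x).
\]
Exactly the same computation with $-$ boundary values gives the same answer, because both $|c_\pm(x)|^2$ and $\widetilde{\Lambda}_\pm^{1/2}\bigl(\widetilde{\Lambda}_\pm^{1/2}\bigr)^\ast$ depend only on moduli and are therefore $\pm$-independent.

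For $\widehat{V}$ the argument is structurally identical with the roles of $1$ and $-1$ swapped: I would write $\widehat{V}_\pm(x) = \widehat{c}_\pm(x)\, Q(x)\, \widehat{\Lambda}_\pm^{1/2}(x)$, verify $|\widehat{c}_\pm(x)|^2 = (1-x)^\alpha(1+x)^\beta$ from the placement of the branch cuts in \eqref{Vhatdef}, and use \eqref{Lambdahatdef} together with $\lambda_j(x)>0$ on $(-1,1)$ to get $\widehat{\Lambda}_\pm^{1/2}\bigl(\widehat{\Lambda}_\pm^{1/2}\bigr)^\ast = \Lambda$; this gives $\widehat{V}_\pm(x)\widehat{V}_\pm(x)^\ast = W(x)$. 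There is no substantive obstacle here: the lemma is essentially bookkeeping on branch cuts, and the potentially worrying phases introduced by the fractional powers $(z\mp 1)^{\gamma/2}$ and by the square roots of the modified eigenvalues cancel automatically upon forming $V V^\ast$.
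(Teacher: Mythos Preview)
Your proof is correct and follows essentially the same approach as the paper: factor $V_\pm(x)$ into its scalar prefactor, $Q(x)$, and $\widetilde{\Lambda}_\pm^{1/2}(x)$, then observe that forming $V_\pm V_\pm^\ast$ turns the branch-cut phases into moduli, so that $|c_\pm(x)|^2 = (1-x)^\alpha(1+x)^\beta$ and $\widetilde{\Lambda}_\pm^{1/2}\overline{\widetilde{\Lambda}_\pm^{1/2}} = \Lambda$ by $|\widetilde{\lambda}_j| = \lambda_j$. The paper defers the detailed computation to its Lemma~\ref{lemma34}, where exactly this calculation is carried out.
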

\begin{proof}
	This follows by straightforward calculation from
	the definitions \eqref{Vdef} and \eqref{Vhatdef}.
	See also Lemma \ref{lemma34} for details. 
\end{proof}

Comparing \eqref{WVfactor} and \eqref{WDfactor} we
see that $V$ and $\widehat{V}$ share the same factorization
property with the matrix valued Szeg\H{o} function $D$.
Actually $D^{\pm}(x)^{-1} V_{\pm}(x)$ and $D_{\pm}(x)^{-1} \widehat{V}_{\pm}(x)$ are unitary matrices for every $x \in (-1,1)$,
see formula \eqref{DVprod} below. 
For our asymptotic results we need their values at the endpoints. 
\begin{lemma} \label{lemma17}
	The two limits
	\begin{align}  \label{U1def}
		U_1 = \lim_{z \to 1} D(z)^{-1} V(z), \qquad
		U_{-1} = \lim_{z \to -1} D(z)^{-1} \widehat{V}(z)
	\end{align}
	exist, and define unitary matrices $U_1$ and $U_{-1}$.
\end{lemma}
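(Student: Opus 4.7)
The plan is to treat only the limit at $z = 1$; the limit at $z = -1$ follows by the same argument after replacing $(V, \widetilde{\Lambda}, n_j)$ by $(\widehat{V}, \widehat{\Lambda}, m_j)$. Set
\[
  M(z) := D(z)^{-1} V(z).
\]
By Proposition \ref{prop12} and the definition \eqref{Vdef}, $M$ is a well-defined, analytic, invertible $r\times r$ matrix valued function on the slit disk $\{|z-1|<\delta\}\setminus[1-\delta,1]$ for some $\delta>0$.

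First I would verify that the boundary values $M_{\pm}(x) = D_{\pm}(x)^{-1}V_{\pm}(x)$ on $(1-\delta,1)$ are unitary. Combining \eqref{WDfactor} and \eqref{WVfactor},
\[
 M_{\pm}(x)\, M_{\pm}(x)^{\ast} = D_{\pm}(x)^{-1}\, V_{\pm}(x) V_{\pm}(x)^{\ast}\, D_{\pm}(x)^{-\ast} = D_{\pm}(x)^{-1}\, W(x)\, D_{\pm}(x)^{-\ast} = I.
\]
This is the identity \eqref{DVprod} foreshadowed right before the statement.

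Second, I would show that $M$ extends single-valuedly across the cut, i.e.\ $M_{+}(x) = M_{-}(x)$ on $(1-\delta,1)$, so that $M$ is analytic on the full punctured disk $\{0<|z-1|<\delta\}$. From \eqref{Vdef}, the branch behavior of $(z-1)^{\alpha/2}$ and of the diagonal entries of $\widetilde{\Lambda}^{1/2}$ (each of the form $(z-1)^{n_j/2}$ times an analytic nonvanishing factor near $z=1$) combine to give the $x$-independent jump
\[
 V_{-}(x)^{-1}V_{+}(x) = e^{i\pi\alpha}\,\diag\bigl((-1)^{n_1},\ldots,(-1)^{n_r}\bigr).
\]
The equality $M_{+} = M_{-}$ then reduces to showing that the jump $D_{-}(x)^{-1}D_{+}(x)$ has the same form near $x=1$, which has to be read off from the explicit construction of the matrix Szeg\H{o} function $D$ in Lemma \ref{lem:Ddef}.

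Once single-valuedness is established, the column-wise endpoint asymptotics of $V$ from \eqref{Vdef} — column $j$ of $V(z)$ behaves like $(z-1)^{\alpha_j/2}$ times an analytic invertible factor, with $\alpha_j = \alpha+n_j$ — combined with the matching endpoint asymptotics of $D$ provided by Lemma \ref{lem:Ddef}, show that all these singularities cancel in $M = D^{-1}V$, so $M$ stays bounded as $z \to 1$. Riemann's removable singularity theorem then yields $U_1 := \lim_{z \to 1} M(z)$, and passing to the limit in $M_{\pm} M_{\pm}^{\ast} = I$ gives $U_1 U_1^{\ast} = I$, so $U_1$ is unitary. The main obstacle is the match between the endpoint and monodromy structure of $D$ and that of $V$ at $z = 1$: this is not encoded in the abstract defining properties of Proposition \ref{prop12}, but is intrinsic to the concrete construction of $D$ given in Lemma \ref{lem:Ddef}.
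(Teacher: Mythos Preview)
Your argument has a genuine gap at the second step: the claim that $M(z)=D(z)^{-1}V(z)$ satisfies $M_{+}(x)=M_{-}(x)$ on $(1-\delta,1)$ is false, so $M$ does \emph{not} extend to a single-valued analytic function on the punctured disk and Riemann's theorem cannot be applied to it directly. To see this already in the simplest scalar case, take $r=1$, $H\equiv 1$ (pure Jacobi weight). Then $V(z)=(z-1)^{\alpha/2}(z+1)^{\beta/2}$ while the Szeg\H{o} function is $D(z)=(z-1)^{\alpha/2}(z+1)^{\beta/2}\varphi(z)^{-(\alpha+\beta)/2}$, so that $M(z)=\varphi(z)^{(\alpha+\beta)/2}$. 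On $(1-\delta,1)$ one has $\varphi_{\pm}(x)=e^{\pm i\arccos x}$, hence $M_{\pm}(x)=e^{\pm i(\alpha+\beta)\arccos(x)/2}$, and these differ for every $x<1$ unless $\alpha+\beta=0$. The jump of $V$ across the cut is the constant $e^{i\pi\alpha}\diag((-1)^{n_j})$ as you computed, but the jump of $D$ carries the $x$-dependent factor coming from $\varphi(z)^{-(\alpha+\beta)/2}$ (and, in the matrix case, from the Szeg\H{o} function of $H$ as well). The two jumps agree only in the limit $x\to 1$, not on an interval, so no appeal to the explicit construction in Lemma~\ref{lem:Ddef} will produce the identity $M_{+}=M_{-}$ you need.

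The paper circumvents this by working not with $M$ but with the symmetrized combinations
\[
\Omega_{1}(z)=D(z)^{-1}V(z)+D(\overline{z})^{\ast}V(\overline{z})^{-\ast},\qquad
\Omega_{2}(z)=(z-1)^{-1/2}\bigl(D(z)^{-1}V(z)-D(\overline{z})^{\ast}V(\overline{z})^{-\ast}\bigr),
\]
which arise naturally from the analytic prefactor $E_{n}$ in the local parametrix at $z=1$; see \eqref{Omega1def}--\eqref{Omega2def}. The analyticity of $E_{n}$ across the cut (Lemma~\ref{Enjump}) forces $\Omega_{1}$ and $\Omega_{2}$ to be single-valued on the punctured disk, and then the pole is excluded using the unitarity of $M_{\pm}(x)$ on the cut together with the crude power bounds \eqref{Vat1}--\eqref{Dat1}. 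The limit then follows from the identity $D(z)^{-1}V(z)=\tfrac{1}{2}\bigl(\Omega_{1}(z)+(z-1)^{1/2}\Omega_{2}(z)\bigr)$, which gives $U_{1}=\tfrac{1}{2}\Omega_{1}(1)$. Your unitarity argument for the boundary values is correct and is exactly what the paper uses to exclude a pole and to pass unitarity to the limit; what is missing is the right object on which to run the removable-singularity argument.
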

The proof of Lemma \ref{lemma17} is in Section \ref{subsec358}.

\subsection{Asymptotics in the exterior region}
Throughout the paper, we need the conformal map
\begin{equation} \label{phidef} \varphi(z) = z + (z^2-1)^{1/2},
	\qquad z \in \mathbb C \setminus [-1,1]
\end{equation}
from $\mathbb C \setminus [-1,1]$ to the exterior of the unit circle.
Our first result is the asymptotics of $P_n(z)$ as $n \to \infty$ 
for $z \in \mathbb C \setminus [-1,1]$. 
The main term in the asymptotic formula \eqref{Pnoutside} is not new
as it is known at least since \cite{AN83}, where it is
proved under weaker assumptions as well, namely $W$
is assumed to satisfy a matrix Szeg\H{o} condition on $[-1,1]$
with a finite number of mass points outside
$[-1,1]$). See also \cite{Ko10} for an infinite number of mass points.

\begin{theorem} \label{theorem14}
Let $W$ be the weight matrix \eqref{Wdef} with $H$
satisfying Assumption \ref{assumptions}. Let $D$ be
the matrix Szeg\H{o} function associated with $W$
as in Proposition \ref{prop12}. Then as $n\to\infty$ the monic MVOP $P_n$
has an asymptotic series expansion
 \begin{equation} \label{Pnoutside} \frac{2^n P_n(z)}{\varphi(z)^n} 
 	\sim  \frac{\varphi(z)^{1/2}}{\sqrt{2} (z^2-1)^{1/4}}
 			D(\infty) \left[ I_r + \sum_{k=1}^{\infty} \frac{\Pi_{k}(z)}{n^k} \right] D(z)^{-1},
 		\quad z \in \mathbb C \setminus [-1,1], \end{equation}
 uniformly for $z$ in compact subsets of $\mathbb C \setminus [-1,1]$, where each $\Pi_k$ is an analytic function
 in $\mathbb C \setminus [-1,1]$. The first one is
 %\begin{equation} \label{Pi0def}
 %	\Pi_0(z) = \frac{\varphi(z)^{1/2}}{\sqrt{2}(z^2-1)^{1/4}}
%	D(\infty) D(z)^{-1} 
%		\end{equation}
\begin{multline} \label{Pi1def}
	\Pi_1(z) = - \frac{1}{8(\varphi(z)-1)}
	U_1 \diag \left( 4\alpha_1^2 - 1,
	\ldots, 4\alpha_r^2 - 1 \right)
	U_1^{-1} \\
	+ 	\frac{1}{8(\varphi(z)+1)} 
	U_{-1} \diag \left( 4\beta_1^2 - 1,
	\ldots, 4\beta_r^2 - 1 \right)
	U_{-1}^{-1},
\end{multline} 
where $U_1$ and $U_{-1}$ are as in \eqref{U1def}
and the parameters $\alpha_j$ and $\beta_j$ for $j=1,\ldots, r$
are given by \eqref{alphajdef} and \eqref{betajdef}.
\end{theorem}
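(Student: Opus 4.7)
The plan is to prove Theorem \ref{theorem14} by carrying out the Deift--Zhou steepest descent analysis of the matrix Riemann--Hilbert problem (RHP) for $P_n$, in the form introduced for MVOPs by Gr\"unbaum--de la Iglesia--Mart\'inez-Finkelshtein \cite{GIM11} and Cassatella-Contra--Ma\~nas \cite{CM12}. The starting point is the $2r\times 2r$ matrix $Y$ whose $(1,1)$-block is $P_n$, analytic off $[-1,1]$, with jump
\[
Y_+(x)=Y_-(x)\begin{pmatrix} I_r & W(x) \\ 0 & I_r\end{pmatrix},\qquad x\in(-1,1),
\]
and normalization $Y(z)=(I_{2r}+O(1/z))\,\mathrm{diag}(z^n I_r,z^{-n}I_r)$ at infinity. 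The strategy is a chain of transformations $Y\to T\to S\to R$, each of which uses one of the three factorizations of $W$ introduced above in an essential way, followed by a small-norm analysis that yields the asymptotic series in \eqref{Pnoutside}.

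In the first step $Y\to T$, I would use the conformal map $\varphi$ in \eqref{phidef} to normalize $Y$ at infinity: setting $T(z)=2^{n\sigma_3\otimes I_r}\,Y(z)\,\varphi(z)^{-n\sigma_3\otimes I_r}$ produces an RHP whose solution tends to $I_{2r}$ at infinity and whose jump on $(-1,1)$ acquires the oscillatory factor $\varphi_+^{-n}\varphi_-^{-n}=1$ on the diagonal and a rapidly oscillating upper-right entry. In the second step $T\to S$ (opening of lenses), I invoke the second factorization \eqref{WDfactor}: the jump factors as
\[
\begin{pmatrix} I_r & W \\ 0 & I_r\end{pmatrix}=\begin{pmatrix} I_r & 0 \\ W^{-1} & I_r\end{pmatrix}\begin{pmatrix} 0 & W \\ -W^{-1} & 0\end{pmatrix}\begin{pmatrix} I_r & 0 \\ W^{-1} & I_r\end{pmatrix},
\]
and deforming the lower-triangular factors into an upper and a lower lens around $(-1,1)$ makes those jumps exponentially close to $I_{2r}$ away from $\pm 1$. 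What survives is the constant off-diagonal jump on $(-1,1)$ and circular jumps around the endpoints.

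In the third step I construct a global parametrix $N$ and local parametrices $P^{(\pm 1)}$. For $N$ I use the matrix Szeg\H{o} function $D$ from Proposition \ref{prop12}, in direct analogy with the scalar construction of \cite{KMVV04}: with $a(z)=((z-1)/(z+1))^{1/4}$,
\[
N(z)=D(\infty)\,\tfrac12\begin{pmatrix} a+a^{-1} & -i(a-a^{-1}) \\ i(a-a^{-1}) & a+a^{-1}\end{pmatrix}\,D(z)^{-\ast\sigma_3},
\]
interpreted so that the jump $N_+=N_-\bigl(\begin{smallmatrix} 0 & I_r \\ -I_r & 0\end{smallmatrix}\bigr)$ on $(-1,1)$ holds thanks to \eqref{WDfactor}. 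For the local parametrix near $1$ I use the third factorization: since $V$ in \eqref{Vdef} is analytic in a punctured neighborhood of $1$, and since $\widetilde\Lambda$ is diagonal with $\widetilde\lambda_j$ vanishing of order $n_j$, the local model RHP decouples into $r$ scalar Bessel RHPs with parameters $\alpha_j=\alpha+n_j$. The classical Bessel parametrix then produces $P^{(1)}$ in closed form, matched to $N$ on the circle through the factor $V$. The analogous construction with $\widehat V$ yields $P^{(-1)}$ with parameters $\beta_j=\beta+m_j$.

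In the final step $R(z)=S(z)N(z)^{-1}$ outside the disks around $\pm 1$ and $R(z)=S(z)(P^{(\pm 1)}(z))^{-1}$ inside. Then $R$ is analytic off the lens boundaries and the two circles, with jumps $I_{2r}+O(1/n)$ uniformly. A standard small-norm argument gives an asymptotic expansion $R(z)\sim I_{2r}+\sum_{k\ge 1} R^{(k)}(z)/n^k$, each $R^{(k)}$ computed iteratively from Cauchy integrals of the known $1/n$-expansions of $P^{(\pm 1)}N^{-1}-I_{2r}$ on the circles (the lens contributions are exponentially small). Unwinding $R\to S\to T\to Y$ and reading off the $(1,1)$-block gives \eqref{Pnoutside} with $\Pi_k=R^{(k)}_{11}\,\text{(pulled back through $N$)}$. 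The main computational task, and the place where the matrix structure really bites, is the evaluation of $\Pi_1$: one has to compute the coefficient of $1/n$ in the expansion of the Bessel parametrix, which on each eigenvalue channel at $z=1$ contributes a factor $(4\alpha_j^2-1)/8$, assemble them into $\mathrm{diag}(4\alpha_j^2-1)$, and transport the result from the $V$-basis to the $D$-basis. This transport is exactly the limit $U_1=\lim_{z\to 1} D(z)^{-1}V(z)$ from Lemma \ref{lemma17}, which produces the conjugation $U_1\,\mathrm{diag}(4\alpha_j^2-1)\,U_1^{-1}$; together with the Cauchy kernel factor $1/(\varphi(z)-1)$ arising from the residue at $z=1$, this yields the first summand of \eqref{Pi1def}. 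The endpoint $-1$ is treated identically using $\widehat V$ and $U_{-1}$, giving the second summand. The principal obstacle is precisely this endpoint bookkeeping: coordinating the three different factorizations so that the matching of local to global parametrices produces the clean conjugated-diagonal form claimed for $\Pi_1$.
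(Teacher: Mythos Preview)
Your proposal is correct and follows essentially the same steepest descent route as the paper: the same chain $Y\to T\to S\to R$, the matrix Szeg\H{o} function $D$ for the global parametrix, the eigenvalue factorization $V$ (resp.\ $\widehat V$) to decouple the local parametrices at $\pm 1$ into $r$ scalar Bessel models with parameters $\alpha_j$ (resp.\ $\beta_j$), and the residue computation that produces the $U_{\pm 1}$-conjugated diagonals in $\Pi_1$. A few details are stated loosely---the lens-opening factorization should carry the $\varphi_\pm^{-2n}$ factors coming from the $Y\to T$ step, the global parametrix jump on $(-1,1)$ is $\bigl(\begin{smallmatrix}0 & W\\ -W^{-1} & 0\end{smallmatrix}\bigr)$ rather than $\bigl(\begin{smallmatrix}0 & I_r\\ -I_r & 0\end{smallmatrix}\bigr)$, and the block structure $\mathrm{diag}(D(\infty),D(\infty)^{-\ast})$ and $\mathrm{diag}(D(z)^{-1},D(\overline z)^\ast)$ should be made explicit---but these are cosmetic and the argument goes through exactly as in the paper.
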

The proof of Theorem \ref{theorem14} is in Section \ref{subsec41}.

The leading term in  \eqref{Pnoutside} is known. The limit
\begin{equation} \label{Pi0def}
	\lim_{n\to \infty} \frac{2^n P_n(z)}{\varphi(z)^n} =
	\frac{\varphi(z)^{1/2}}{\sqrt{2} (z^2-1)^{1/4}} D(\infty) D(z)^{-1},
	\quad z \in \mathbb C \setminus [-1,1], \end{equation}
can equivalently be written as
\[ \lim_{n \to \infty} (2z)^n P_n\left(\frac{z+z^{-1}}{2} \right)
	= \frac{1}{(1-z^2)^{1/2}}  D(\infty) D\left(\frac{z+z^{-1}}{2}\right)^{-1} 
	\qquad |z| < 1,
\] 
which corresponds to the asymptotics stated in 
\cite[Theorem 2]{AN83} and  \cite{Ko10}.
The analogous result for MVOP on the unit circle dates back to \cite{YK78} and \cite{DGK78}.

The existence of a full asymptotic expansion is new, as 
well as the explicit
form \eqref{Pi1def} of the first subleading term.
\begin{remark}
The expression \eqref{Pi1def} simplifies if $n_j=m_j = 0$
for every $j=1, \ldots, r$, since in that case the
two diagonal matrices in \eqref{Pi1def} are multiples
of the identity matrix. Then \eqref{Pi1def} reduces to
\begin{align*} \Pi_1(z) & = \left( 
	- \frac{4\alpha^2-1}{8(\varphi(z)-1)}
		+ \frac{4\beta^2-1}{8 (\varphi(z) + 1)} \right) I_r.
	\end{align*}
which is consistent with the formula given in \cite[formula (1.13)]{KMVV04} for the scalar case.
\end{remark}

\subsection{Asymptotics on the interval $(-1,1)$}
The MVOP $P_n$ has oscillatory behavior on the interval $(-1,1)$.
Theorem \ref{theorem15} should be compared with Theorem 2 (f) in  \cite{AN83}, where the boundary
values are given in $L^2$ sense, while our asymptotic
formula \eqref{eq:innerasymp} holds uniformly on compact
subsets of $(-1,1)$.
\begin{theorem} \label{theorem15}
With the same assumptions as in Theorem \ref{theorem14},
we have uniformly for $x$ in compact subsets of $(-1,1)$,	
	\begin{multline} \label{eq:innerasymp0} 	
		2^n P_n(x) = 	\frac{1}{\sqrt{2} \sqrt[4]{1-x^2}} 
		D(\infty )\\
		\times
	\left( e^{i \left(n + \frac{1}{2} \right) \arccos(x) - 
		\frac{\pi i}{4}} D_+(x)^{-1}
	+
	e^{-i \left(n+ \frac{1}{2} \right) \arccos(x) + 
		\frac{\pi i}{4}}	D_-(x)^{-1} \right)
	+ \mathcal{O}(n^{-1}), \end{multline}	
	as $n \to \infty$.
	
	In case $W(x)$ is real symmetric for every $x \in (-1,1)$,
	then the MVOP $P_n(x)$ is real
	valued for real $x$. Then, if we use
	the normalized Szeg\H{o} function as in Remark \ref{remark13},
	we have uniformly for $x$ in compact subsets of $(-1,1)$,
	\begin{equation}\label{eq:innerasymp}
		\begin{aligned}
			2^n P_n(x)
			&=	\frac{\sqrt{2}}{(1-x^2)^{\frac{1}{4}}} D(\infty)	\Re\left(e^{i\left(n+\frac{1}{2}\right)\arccos x-\frac{\pi i}{4}}D_+(x)^{-1}\right)+\mathcal{O}(n^{-1}),
		\end{aligned}
	\end{equation}
	where the real part of the matrix is taken entrywise.
	\end{theorem}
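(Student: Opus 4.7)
The starting point is the Riemann--Hilbert steepest descent analysis already in place for the proof of Theorem \ref{theorem14}. That analysis produces a chain of transformations $Y \mapsto T \mapsto S \mapsto R$ in which $T$ normalizes the problem at infinity (absorbing powers of $\varphi$), $S$ opens lenses around $(-1,1)$ through a triangular factorization of the jump on the interval that exploits $W = D_{\pm} D_{\pm}^{\ast}$ from Proposition \ref{prop12}, the global parametrix $P^{(\infty)}$ is built explicitly from the Szeg\H{o} matrix $D$, and $R = S (P^{(\infty)})^{-1}$ outside small disks around $\pm 1$ solves a small-norm problem with $R(z) = I_{2r} + \mathcal{O}(n^{-1})$ uniformly on $\mathbb{C}$. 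For the present theorem what matters is that this estimate is, in particular, uniform on compact subsets $K \subset (-1,1)$, approached from either side.

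To prove \eqref{eq:innerasymp0} I would fix $x \in K$, extract $P_n(x) = [Y_+(x)]_{11}$ from the $(1,1)$ block of the RHP solution, and unwind the transformations on the interval. Retracing $Y \mapsto T$ introduces a factor of $\varphi_+(x)^n = e^{in\arccos x}$; retracing $T \mapsto S$ (from the upper lens) introduces a triangular lens factor involving $W(x)^{-1}$ together with $\varphi_+(x)^{-2n}$; and $S_+(x) = R(x) P^{(\infty)}_+(x)$. Expanding the $(1,1)$ block of the resulting product yields two terms: the diagonal-block contribution from $[P^{(\infty)}_+(x)]_{11}$ carries the exponential $e^{in\arccos x}$, while the off-diagonal contribution from $[P^{(\infty)}_+(x)]_{12}$ combines with the lens factor so that the net exponential becomes $\varphi_+(x)^{n-2n} = \varphi_+(x)^{-n} = \varphi_-(x)^n = e^{-in\arccos x}$. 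The universal prefactor $\frac{1}{\sqrt{2}(1-x^2)^{1/4}} D(\infty)$, the phase shifts $\mp \pi/4$, and the shift $n \mapsto n + 1/2$ are precisely the boundary values on $(-1,1)$ of the function $\frac{\varphi(z)^{1/2}}{\sqrt{2}(z^2-1)^{1/4}} D(\infty)$ appearing in \eqref{Pi0def}, while the error $\mathcal{O}(n^{-1})$ comes from $R(x) - I_{2r}$.

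The identity \eqref{eq:innerasymp} for real symmetric $W$ is then immediate from \eqref{eq:innerasymp0}. By Remark \ref{remark13}, the normalized Szeg\H{o} function satisfies $D(\overline z) = \overline{D(z)}$, so $D_-(x)^{-1} = \overline{D_+(x)^{-1}}$, and $D(\infty)$ is a real matrix. The two summands in \eqref{eq:innerasymp0} are therefore entry-wise complex conjugates of each other, and their sum equals twice the entry-wise real part of either one, which is exactly \eqref{eq:innerasymp}.

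The main obstacle is the non-commutative bookkeeping in the second paragraph. In the scalar case of \cite{KMVV04} one can freely permute the Szeg\H{o} factors, but in the matrix setting each block of $P^{(\infty)}_{\pm}(x)$ contains products involving $D(\infty)$ on the left and $D_{\pm}(x)^{-1}$ or $D_{\pm}(x)^{\ast}$ on the right. Verifying that, after multiplication by the triangular lens factors and by $\varphi_+^n$, these assemble in exactly the order displayed in \eqref{eq:innerasymp0}---with $D(\infty)$ outside on the left and $D_{\pm}(x)^{-1}$ on the right of the corresponding exponential---requires the explicit construction of the global parametrix $P^{(\infty)}$, which is where the matrix Szeg\H{o} function of Proposition \ref{prop12} plays its essential role.
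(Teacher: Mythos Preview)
Your outline is correct and follows essentially the same route as the paper's proof: unwind $Y\mapsto T\mapsto S$ from the upper lens, write $S_+(x)=R(x)M_+(x)$ with $R=I_{2r}+\mathcal{O}(n^{-1})$, and read off the two oscillatory terms from the block structure of the global parametrix built via the matrix Szeg\H{o} function. The real-symmetric reduction to \eqref{eq:innerasymp} is also argued exactly as in the paper.

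One small point worth noting: where you propose to extract both exponentials from the $+$ boundary value alone (the $e^{in\arccos x}$ term from $[M_+]_{11}$ and the $e^{-in\arccos x}$ term from $[M_+]_{12}W^{-1}$), the paper instead uses the jump relation $M_+=M_-\begin{pmatrix}0_r & W\\ -W^{-1} & 0_r\end{pmatrix}$ to rewrite the second contribution directly as $[M_-]_{11}\varphi_-^n$, arriving at the symmetric expression
\[
2^nP_n(x)=\begin{pmatrix}I_r & 0_r\end{pmatrix}R(x)\bigl(M_+(x)\varphi_+(x)^n+M_-(x)\varphi_-(x)^n\bigr)\begin{pmatrix}I_r\\0_r\end{pmatrix}.
\]
This makes the ``non-commutative bookkeeping'' you flag as the main obstacle essentially disappear: the identity $D_-(x)^\ast W(x)^{-1}=D_-(x)^{-1}$ is absorbed into the jump, and both terms are manifestly of the form $D(\infty)\cdot(\text{scalar})\cdot D_\pm(x)^{-1}$ coming from $[M_0]_{11,\pm}$. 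Your approach reaches the same place but requires verifying $[M_+]_{12}W^{-1}=D(\infty)[M_0]_{11,-}D_-^{-1}$ by hand, which is equivalent.
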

	We prove Theorem \ref{theorem15} in Section \ref{subsec42}.

\subsection{Asymptotics near the endpoints $z=\pm 1$}

Near the endpoints $\pm 1$ we find asymptotic
formulas in terms of Bessel functions. For the
scalar case $r=1$, the following is known (see Theorem 1.13 of
\cite{KMVV04}): there exists $\delta > 0$
such that for $x \in (1-\delta, 1)$ we have 
\begin{equation} \label{Pnnear1scalar} 
	P_n(x) = \frac{D(\infty)}{2^n \sqrt{W(x)}}
	\frac{\sqrt{n\pi \arccos x}}{(1-x^2)^{1/4}}
	\begin{pmatrix} \cos(\zeta(x)) & \sin(\zeta(x)) 
		\end{pmatrix} \left(I_{2} + \mathcal{O}(n^{-1}) \right)
		\begin{pmatrix} J_{\alpha}(n \arccos x) \\
			J_{\alpha}'(n \arccos x) \end{pmatrix} 
		\end{equation}
as $n \to \infty$, where $J_{\alpha}$ is the Bessel
function of the first kind and order $\alpha$ and
$\zeta(x)$ is a certain explicit function that
depends on the weight $W$. 

In the matrix valued generalization of \eqref{Pnnear1scalar}, 
it turns out that Bessel functions of various orders appear.
The orders of the Bessel functions in the asymptotics near $1$
are determined by the parameters $\alpha_j$ introduced
in \eqref{alphajdef}. 

We write
\begin{equation} \label{Jvecalphadef}
J_{\vec{\alpha}}(x) =
	\diag\left(J_{\alpha_1}(x), \ldots, J_{\alpha_r}(x)
	\right) \end{equation}
for the diagonal matrix containing the Bessel function
of orders $\alpha_1, \ldots, \alpha_r$ on the diagonal,
and similarly for $\left(J_{\vec{\alpha}}\right)'(x)$. 
We also use  
\begin{equation} \label{Azdef}
	A(z) = \frac{(z+1)^{1/2} + (z-1)^{1/2}}{\sqrt{2}} D(z)^{-1} V(z),
\end{equation}
with the principal branch of the square roots and
$V$ is defined in \eqref{Vdef}.

\begin{theorem} \label{theorem16}
We make the same assumptions as in Theorem \ref{theorem14},
and we let $Q$ and $A$ be given by \eqref{HQLambda} and \eqref{Azdef}.
Then there exists $\delta>0$ such that for $x\in(1-\delta,1)$,
\begin{multline} \label{Pnnear1}
	P_n(x) \sqrt{W(x)} 
	= \frac{\sqrt{\pi n \arccos x}}{2^{n} (1-x^2)^{1/4}} D(\infty) 
	\begin{pmatrix} \frac{A_+(x) + A_-(x)}{2} & 
			\frac{A_+(x) - A_-(x)}{2i} \end{pmatrix} \left(I_{2r} + \mathcal{O}(n^{-1}) \right)  \\ 	\times 
	\begin{pmatrix}  
		J_{\vec{\alpha}}(n \arccos x) \\
		\left( J_{\vec{\alpha}}\right)'(n \arccos x) 
	\end{pmatrix} Q(x)^\ast,
\end{multline}
with $J_{\vec{\alpha}}$ as in \eqref{Jvecalphadef}.
\end{theorem}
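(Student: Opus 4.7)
The plan is to complete the Deift--Zhou steepest descent analysis at the endpoint $z=1$. After the transformations $Y \to T \to S \to R$ set up in the preceding sections (normalization at infinity via $g(z) = \log\varphi(z) - \log 2$, followed by lens opening using the Szeg\H{o} factorization $W = D_\pm D_\pm^\ast$), one arrives at a Riemann--Hilbert problem whose only nontrivial features are concentrated in small disks around $\pm 1$. The task is to construct the local parametrix $P^{(1)}$ in $D(1,\delta)$, match it with the outer parametrix, and then unwind the chain of transformations. The third factorization $W = V_\pm V_\pm^\ast$ is the right tool inside the disk because it exposes the endpoint exponents $\alpha_j = \alpha + n_j$ through the eigenvalue structure of $H$.

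First I would construct the local parametrix in the form
\begin{equation*}
P^{(1)}(z) = E(z)\, \Psi_{\vec\alpha}\bigl(n^2 f(z)\bigr)\, V(z)^{-1},
\end{equation*}
where $f(z) = \tfrac{1}{4}(\log\varphi(z))^2$ is the standard conformal map with $f(1)=0$ satisfying $2 n f(x)^{1/2} = n \arccos x$ for $x \in (1-\delta,1)$; where $\Psi_{\vec\alpha}$ is the $2r \times 2r$ Bessel model assembled as a block direct sum of the usual $2\times 2$ scalar Bessel parametrices of orders $\alpha_1, \ldots, \alpha_r$; and where $E$ is an analytic prefactor on $D(1,\delta)$ determined by the matching condition $P^{(1)}(z) N(z)^{-1} = I_r + \mathcal{O}(1/n)$ on $\partial D(1,\delta)$, with $N$ the outer parametrix whose leading behavior is the factor appearing in \eqref{Pi0def}. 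The factor $W = V_\pm V_\pm^\ast$ produces the same jumps as those of $S$ across the lens arcs inside the disk, while the block-diagonal structure of $\Psi_{\vec\alpha}$ supplies the correct Bessel monodromy on $(1-\delta,1)$.

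The delicate step is verifying that $E$, defined through the matching, is analytic on the full disk $D(1,\delta)$ rather than merely on $D(1,\delta) \setminus (1-\delta,1]$. This is where the unitary limit $U_1 = \lim_{z\to 1} D(z)^{-1} V(z)$ from Lemma \ref{lemma17} enters: the boundary values $D_{\pm}^{-1} V_{\pm}$ are unitary and extend continuously to a common value at $z=1$, which ensures that the apparent cut of $E$ is removable. In fact $E$ admits a closed-form expression in terms of $A(z) = \varphi(z)^{1/2} D(z)^{-1} V(z)$ as defined in \eqref{Azdef}, and it is this identification that will ultimately feed the $A_\pm$ combinations into the answer. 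A standard small-norm argument then gives $R(z) = I_r + \mathcal{O}(1/n)$ uniformly on $\partial D(1,\delta)$, and hence $S(z) = R(z) P^{(1)}(z)$ inside the disk modulo a $1/n$ error.

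To conclude, for $x \in (1-\delta, 1)$ I would identify $P_n(x)$ with the upper-left block of $Y(x)$ and trace the transformations backwards through $T$ and $S$. This produces $2^n P_n(x) \sqrt{W(x)}$ in terms of $D(\infty) E(x) \Psi_{\vec\alpha,+}\bigl(n^2 f(x)\bigr) V_+(x)^{-1} \sqrt{W(x)}$ modulo $\mathcal{O}(1/n)$, with the factor $V_+(x)^{-1}\sqrt{W(x)}$ producing $Q(x)^\ast$ up to a unitary diagonal that is absorbed into $E$. Using the Hankel identities $H_\alpha^{(1)} + H_\alpha^{(2)} = 2 J_\alpha$ and $H_\alpha^{(1)} - H_\alpha^{(2)} = 2 i Y_\alpha$, the two block rows of $\Psi_{\vec\alpha,+}$ on the real line are precisely the combinations that, after multiplication by $E(x)$ and use of the closed form $E = E(A_\pm)$, turn into $(A_+(x) + A_-(x))/2$ and $(A_+(x) - A_-(x))/(2i)$, acting respectively on $J_{\vec\alpha}(n\arccos x)$ and on $(J_{\vec\alpha})'(n\arccos x)$. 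The prefactor $\sqrt{\pi n \arccos x}/(1-x^2)^{1/4}$ collects $f'(x)^{1/2}$ together with the standard normalization constants of the Bessel parametrix. The main obstacle is the analytic continuation of $E$ across $(1-\delta,1]$ and the precise matching with $N$, since all the matrix-valued complications induced by the possibly distinct orders $\alpha_j$ are concentrated in that step; once $E$ is under control, the remainder is routine bookkeeping together with Bessel function asymptotics.
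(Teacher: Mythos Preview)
Your proposal is correct and follows essentially the same route as the paper: build the local parametrix near $z=1$ from a block Bessel model with orders $\alpha_j$ via the $V$-factorization, establish analyticity of the prefactor through the unitary limit $U_1$ of Lemma~\ref{lemma17}, and then unwind $Y\to T\to S\to R$. Note only that the objects are $2r\times 2r$ throughout (the paper writes the local parametrix as $P=E_n\,P^{(1)}\begin{pmatrix}\varphi^{-n}V^{-1}&0_r\\0_r&\varphi^{n}V(\bar z)^\ast\end{pmatrix}$ with $R=I_{2r}+\mathcal{O}(n^{-1})$), so your schematic $E\,\Psi_{\vec\alpha}\,V^{-1}$ and the $I_r$'s should be read at that size.
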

From \eqref{Azdef} we obtain
\[ A_{\pm}(x) = e^{\pm i \arccos x} D_{\pm}^{-1} V_{\pm}(x) \]
and $A_{\pm}(x)$ turn out to be unitary matrices for $x \in (-1,1)$,
see  \eqref{DVprod} below. The
limit 
\begin{equation} \label{Adef} U_1 = \lim_{z \to 1} A(z) =
	\lim_{z \to 1} D(z)^{-1} V(z) \end{equation}
exists and is also a unitary matrix. It agrees with \eqref{U1def}.

If $W$ is real symmetric, then $A_-(x) = \overline{A_+(x)}$,
and $A$ is a real orthogonal matrix. Then we may write
\begin{equation} \label{Apmpolar} 
	A_{\pm} (x) = U_1 e^{\pm i Z(x)} \end{equation}
with a Hermitian matrix valued function $Z(x)$ that varies analytically and $Z(x) \to O_r$ as $x \to 1-$.
In fact we have $Z(x) = \mathcal O\left(\sqrt{1-x}\right)$
as $x \to 1-$.
Then 
\[ \frac{A_+(x) + A_-(x)}{2}  = U_1 \cos Z(x),
	\quad \frac{A_+(x) - A_-(x)}{2i} =  U_1 \sin Z(x) \]
and we obtain the following.
\begin{corollary}
	If $W(x)$ is real symmetric for every $x \in (-1,1)$,
	then  \eqref{Pnnear1} takes the form
	\begin{multline*}
		P_n(x) \sqrt{W(x)} 
		= \frac{\sqrt{\pi n \arccos x}}{2^{n} (1-x^2)^{1/4}} 
		D(\infty) 
		U_1 \begin{pmatrix} \cos (Z(x)) & 
			\sin (Z(x)) \end{pmatrix} \left(I_{2r} + \mathcal{O}(n^{-1}) \right)  \\ 	\times 
		\begin{pmatrix}  
			J_{\vec{\alpha}}(n \arccos x) \\
			\left( J_{\vec{\alpha}}\right)'(n \arccos x) 
		\end{pmatrix} Q(x)^T.
	\end{multline*}
\end{corollary}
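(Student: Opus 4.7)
The proof is a direct substitution into the asymptotic formula \eqref{Pnnear1} of Theorem \ref{theorem16}, using two simplifications available in the real symmetric setting. First, since $H(x)$ is a real symmetric real analytic matrix on $[-1,1]$, Rellich's theorem (Lemma \ref{Rellich}) can be applied to this real analytic pencil to produce a real orthogonal diagonalizing matrix $Q(x)$, so that $Q(x)^\ast = Q(x)^T$ throughout $[-1,1]$. Second, the polar factorization \eqref{Apmpolar}, $A_\pm(x) = U_1 e^{\pm i Z(x)}$ with an analytic Hermitian matrix $Z(x)$ satisfying $Z(x) \to O_r$ as $x \to 1-$, is recorded in the paragraph immediately preceding the corollary and may be used as given.

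From \eqref{Apmpolar} and the Hermitian functional calculus, and using that $U_1$ is a constant matrix that factors out from the left,
$$
\frac{A_+(x) + A_-(x)}{2} = U_1 \cos Z(x), \qquad \frac{A_+(x) - A_-(x)}{2i} = U_1 \sin Z(x).
$$
Assembling these two identities into a single row block gives
$$
\begin{pmatrix} \dfrac{A_+(x)+A_-(x)}{2} & \dfrac{A_+(x)-A_-(x)}{2i} \end{pmatrix}
= U_1 \begin{pmatrix} \cos Z(x) & \sin Z(x) \end{pmatrix},
$$
where the right factor is a $(r \times 2r)$-block row with entries defined by the convergent power series of $\cos$ and $\sin$ applied to the Hermitian matrix $Z(x)$.

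Inserting this identity, together with the equality $Q(x)^\ast = Q(x)^T$, into \eqref{Pnnear1} yields precisely the formula claimed in the corollary; no further calculation is required. I do not expect a serious obstacle, since every nontrivial ingredient has already been supplied in the preceding discussion: the existence and unitarity of $U_1$ come from Lemma \ref{lemma17}, while the polar form \eqref{Apmpolar} itself rests on the conjugation symmetry $\overline{A(\overline z)} = A(z)$ inherited from $\overline{D(\overline z)} = D(z)$ (Remark \ref{remark13}) and the corresponding property of $V(z)$, combined with the existence of a Hermitian logarithm of the unitary matrix $U_1^{-1} A_+(x)$ close to $x = 1$.
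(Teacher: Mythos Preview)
Your proposal is correct and follows essentially the same approach as the paper: the paper derives the corollary in the text immediately preceding it by writing $A_\pm(x)=U_1 e^{\pm i Z(x)}$ and reading off $\tfrac{A_++A_-}{2}=U_1\cos Z(x)$, $\tfrac{A_+-A_-}{2i}=U_1\sin Z(x)$, then substituting into \eqref{Pnnear1}. Your additional remark that $Q(x)$ can be taken real orthogonal (hence $Q(x)^\ast=Q(x)^T$) in the real symmetric case is a useful detail that the paper leaves implicit.
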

We obtain from Theorem \ref{theorem16} the Mehler-Heine asymptotics at $z=1$.

\begin{theorem} \label{theorem17}
Suppose the weight matrix $W$ satisfies Assumptions \ref{assumptions}. Suppose $\lambda_j$, $j=1, \ldots, r$
be the eigenvalues of $H$ as in \eqref{HQLambda}, \eqref{Lambdadef} and let  
\begin{equation} \label{eq:cj} 
	c_j = 2^{-\alpha_j + \beta} \lim_{x \to 1} \frac{\lambda_j(x)}{(1-x)^{n_j}},
	\quad \text{ for } j=1, \ldots, r, 
	\end{equation}
where $\alpha_j = \alpha + n_j$ as in \eqref{alphajdef}.   
Then we have the following Mehler--Heine asymptotics of
the monic MVOP associated with $W$:
\begin{multline} \label{eq:MHgeneral}
	\lim_{n\to\infty} \frac{2^{n}}{ \sqrt{n\pi}}  P_n\left(\cos \frac{\theta}{n} \right) Q\left(\cos \frac{\theta}{n}\right) 
	\diag\left(c_1^{1/2} n^{-\alpha_1}, \ldots, 
	c_r^{1/2} n^{-\alpha_r} \right) \\
	= D(\infty) U_1
	\diag\left(\theta^{-\alpha_1} J_{\alpha_1}(\theta),
		\ldots, \theta^{-\alpha_r} J_{\alpha_r}(\theta) \right)
\end{multline}
with $Q$ and $U_1$ given by \eqref{HQLambda} and \eqref{U1def} 
and $D$ the matrix valued Szeg\H{o} function. 
\end{theorem}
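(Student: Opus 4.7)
The plan is to derive \eqref{eq:MHgeneral} as a direct corollary of Theorem \ref{theorem16} by specializing $x = \cos(\theta/n)$ and collecting limits of the scalar and matrix factors separately. First I would solve \eqref{Pnnear1} for $P_n(x)$ by inverting $\sqrt{W(x)}$ on the right. Using the spectral decomposition \eqref{HQLambda} together with the unitarity of $Q(x)$,
\[ \sqrt{W(x)}^{-1} = (1-x)^{-\alpha/2}(1+x)^{-\beta/2}\, Q(x)\, \Lambda(x)^{-1/2}\, Q(x)^{\ast}, \]
so that multiplying \eqref{Pnnear1} on the right by $\sqrt{W(x)}^{-1} Q(x)$ yields
\[ P_n(x)\, Q(x) = \frac{\sqrt{\pi n \arccos x}}{2^n (1-x^2)^{1/4} (1-x)^{\alpha/2} (1+x)^{\beta/2}}\, D(\infty)\, B_n(x)\, \Lambda(x)^{-1/2}, \]
where $B_n(x)$ denotes the matrix expression in \eqref{Pnnear1} sitting between $D(\infty)$ and $Q(x)^{\ast}$.

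Next, I would substitute $x = \cos(\theta/n)$ and Taylor expand. Using $1-\cos(\theta/n) \sim \theta^2/(2n^2)$ and $\sin(\theta/n) \sim \theta/n$,
\[ n\arccos x = \theta, \quad (1-x^2)^{1/4} \sim \sqrt{\theta/n}, \quad (1-x)^{-\alpha/2} \sim 2^{\alpha/2} n^{\alpha} \theta^{-\alpha}, \quad (1+x)^{-\beta/2} \to 2^{-\beta/2}. \]
Together with the factor $2^n/\sqrt{n\pi}$ on the left-hand side of \eqref{eq:MHgeneral}, the scalar prefactor collapses to $2^{(\alpha-\beta)/2} n^{\alpha} \theta^{-\alpha}$. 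Since the Bessel argument $n\arccos x = \theta$ is exact, $J_{\vec\alpha}(n \arccos x) = J_{\vec\alpha}(\theta)$ and likewise for the derivative. Using \eqref{alphajdef}, \eqref{eq:cj}, and the resulting asymptotic $\lambda_j(x) \sim 2^{\alpha_j-\beta} c_j (1-x)^{n_j}$ as $x \to 1-$, the $(j,j)$ entry of $\Lambda(\cos(\theta/n))^{-1/2} \diag(c_j^{1/2} n^{-\alpha_j})$ tends to $2^{(\beta-\alpha)/2} n^{-\alpha} \theta^{-n_j}$, and this precisely cancels the $n^{\alpha}$ and the power of $2$ coming from the prefactor, leaving the combined scaling $\theta^{-\alpha-n_j} = \theta^{-\alpha_j}$.

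Finally, by \eqref{Adef} and Lemma \ref{lemma17}, $A_{\pm}(x) \to U_1$ as $x \to 1-$, so the block $\tfrac{1}{2i}(A_+(x)-A_-(x))$ converges to $O_r$ while $\tfrac{1}{2}(A_+(x)+A_-(x)) \to U_1$. Consequently $B_n(\cos(\theta/n))$ converges to $U_1 J_{\vec\alpha}(\theta)$, and assembling the pieces gives
\[ \lim_{n\to\infty} \frac{2^n}{\sqrt{n\pi}}\, P_n(\cos(\theta/n))\, Q(\cos(\theta/n))\, \diag\bigl(c_j^{1/2} n^{-\alpha_j}\bigr) = D(\infty)\, U_1\, \diag\bigl(\theta^{-\alpha_j} J_{\alpha_j}(\theta)\bigr), \]
which is \eqref{eq:MHgeneral}. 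The only nontrivial aspect is the bookkeeping of the powers of $2$, $n$ and $\theta$, together with checking that the $\mathcal{O}(n^{-1})$ error from Theorem \ref{theorem16} does not spoil the limit after multiplication by the diagonal scaling $\Lambda(\cos(\theta/n))^{-1/2}\diag(c_j^{1/2} n^{-\alpha_j})$; this is clear because the combined scaling has a finite limit (so in particular stays bounded) as $n \to \infty$ for fixed $\theta$.
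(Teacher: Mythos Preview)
Your proposal is correct and follows essentially the same approach as the paper's proof: both derive the Mehler--Heine formula directly from Theorem \ref{theorem16} by setting $x=\cos(\theta/n)$, using the spectral identity $\sqrt{W(x)}\,Q(x)=Q(x)\Lambda^{1/2}(x)(1-x)^{\alpha/2}(1+x)^{\beta/2}$ (you use its inverse), the limits $A_\pm(x)\to U_1$, and the same bookkeeping of powers of $2$, $n$, $\theta$. The only cosmetic difference is that the paper keeps $P_n\sqrt{W}\,Q$ on the left and passes to the limit of $\Lambda^{1/2}$ times the diagonal scaling, whereas you first invert $\sqrt{W}$ to isolate $P_nQ$; the content is identical.
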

The proofs of Theorems \ref{theorem16} and \ref{theorem17} are
in Section \ref{subsec43}.

Analogous results hold near $-1$, with Bessel
functions of order $\beta_j = \beta + m_j$.

\subsection{Asymptotics of recurrence coefficients}
The monic MVOPs satisfy a three term recurrence relation:
\begin{equation}\label{eq:TTRR}
xP_n(x)=P_{n+1}(x)+B_nP_{n}(x)+C_nP_{n-1}(x),
\end{equation}
with initial values $P_{-1}(x)=0_r$ and $P_0(x)=I_r$,
see e.g.~\cite{DPS08}. From the Riemann-Hilbert asymptotic
analysis that we present in this paper,  one can obtain large $n$ asymptotics for the recurrence coefficients $B_n$ and $C_n$,
see also \cite{KMVV04} for the scalar case.

Recall that  $\alpha_j$ and $\beta_j$ for $j=1, \ldots, r$
are defined in \eqref{alphajdef} and \eqref{betajdef}.

\begin{theorem} \label{theorem18}
Suppose the weight matrix $W$ satisfies the assumptions
of Theorem \ref{theorem14}.
The recurrence coefficients $B_n$ and $C_n$ in \eqref{eq:TTRR} admit asymptotic expansions  of the form
\begin{equation} \label{eq:BnCnasymp}
    B_n\sim \sum_{k=2}^{\infty}
    \frac{\mathcal{B}_{k}}{n^k}, \qquad
    C_n\sim \frac{1}{4}I_{2r}+\sum_{k=2}^{\infty}\frac{\mathcal{C}_{k}}{n^k}, \qquad n\to\infty,
\end{equation}
with certain computable $r\times r$ matrices $\mathcal{B}_k$, $\mathcal{C}_k$,
for $k=2, 3,  \ldots$.

We have an explicit formula for $\mathcal{B}_2$,
\begin{multline}  \label{b2def} 
	\mathcal{B}_2 =  
	-\frac{1}{16} D(\infty) U_1 \diag 
	\left(4\alpha_1^2 - 1, \ldots, 4 \alpha_r^2 - 1 \right)
	U_1^{-1} D(\infty)^{-1} \\
	 + \frac{1}{16} D(\infty) U_{-1}
	\diag \left( 4\beta_1^2 - 1, \ldots, 4 \beta_r^2-1 \right) 
	U_{-1}^{-1} D(\infty)^{-1},
\end{multline}
where $U_1$ and $U_{-1}$ are given  by \eqref{U1def},
and $D$ is the matrix valued Szeg\H{o} function.
\end{theorem}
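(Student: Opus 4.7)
The plan is to read off the recurrence coefficients from the Laurent expansion of $P_n(z)$ at $z = \infty$, using Theorem \ref{theorem14}. Writing $P_n(z) = z^n I_r + p_n^{(1)} z^{n-1} + p_n^{(2)} z^{n-2} + \mathcal{O}(z^{n-3})$ and matching the coefficients of $z^n$ and $z^{n-1}$ on both sides of \eqref{eq:TTRR}, one obtains
\[
B_n = p_n^{(1)} - p_{n+1}^{(1)}, \qquad C_n = p_n^{(2)} - p_{n+1}^{(2)} - B_n\, p_n^{(1)}.
\]
So the task reduces to producing asymptotic expansions in $1/n$ for $p_n^{(1)}$ and $p_n^{(2)}$ and then taking finite differences in $n$.

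To obtain these expansions I would expand the right hand side of \eqref{Pnoutside} at $z=\infty$. Setting $\psi(z) := \varphi(z)/(2z)$, one has $\psi(z) = 1 - \frac{1}{4z^2} + \mathcal{O}(1/z^4)$, an \emph{even} series in $1/z$, so $\psi(z)^n$ is itself an even series in $1/z$ with coefficients polynomial in $n$; similarly $g(z) := \varphi(z)^{1/2}/[\sqrt{2}(z^2-1)^{1/4}] = 1 + \frac{1}{8z^2} + \mathcal{O}(1/z^4)$ is even. The matrix factor $F_n(z) := D(\infty)\bigl[I_r + \sum_{k\ge 1} \Pi_k(z)/n^k\bigr] D(z)^{-1}$ is analytic at infinity with an expansion $I_r + M_1(n)/z + M_2(n)/z^2 + \cdots$ in which each $M_j(n)$ is a power series in $1/n$, built from $\Pi_k(z) = \pi_k^{(1)}/z + \pi_k^{(2)}/z^2 + \cdots$ and from the Laurent series of $D(\infty) D(z)^{-1}$. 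Multiplying the three factors and collecting, the polynomial-in-$n$ contributions (from $\psi^n g$) only enter the \emph{even} coefficients $p_n^{(2)}, p_n^{(4)}, \ldots$, while $p_n^{(1)} = M_1(n)$. In particular
\[
p_n^{(1)} = \alpha_1 + \frac{D(\infty)\, \pi_1^{(1)}\, D(\infty)^{-1}}{n} + \mathcal{O}(1/n^2),
\]
with $\alpha_1$ independent of $n$, so the finite difference $p_n^{(1)} - p_{n+1}^{(1)}$ starts at order $1/n^2$ and yields $B_n = n^{-2} D(\infty)\,\pi_1^{(1)}\, D(\infty)^{-1} + \mathcal{O}(1/n^3)$. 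A direct use of $1/(\varphi(z) \mp 1) = 1/(2z) + \mathcal{O}(1/z^2)$ in \eqref{Pi1def} gives
\[
\pi_1^{(1)} = \tfrac{1}{16}\bigl[\,U_{-1}\diag(4\beta_j^2-1)\,U_{-1}^{-1} - U_1\,\diag(4\alpha_j^2-1)\, U_1^{-1}\bigr],
\]
whose conjugation by $D(\infty)$ is exactly \eqref{b2def}. For $p_n^{(2)}$ the prefactor $\psi(z)^n g(z)$ contributes $(-n/4 + 1/8) I_r$ at order $1/z^2$, and the finite difference $p_n^{(2)} - p_{n+1}^{(2)}$ converts the $-n/4$ piece into the leading $\tfrac{1}{4} I_r$ of $C_n$; all remaining contributions are of order $1/n^2$.

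The formal manipulations above are justified rigorously through the Deift--Zhou steepest descent analysis of Section \ref{subsec41}: the final Riemann--Hilbert matrix admits a full asymptotic expansion $R(z) \sim I_{2r} + \sum_{k\ge 1} R_k(z)/n^k$ that is uniform in a neighborhood of $z = \infty$, and undoing the successive transformations $R \mapsto S \mapsto T \mapsto Y$ (each of which acts near infinity by multiplication with matrices analytic and expansible in $1/z$ at $\infty$) produces asymptotic expansions in $1/n$ for the coefficients $(Y_1)_{11} = p_n^{(1)}$ and $(Y_2)_{11} = p_n^{(2)}$ of the RH solution. This is exactly the scheme used in the scalar case in \cite{KMVV04} and it adapts to the present matrix setting without change, establishing the existence of the computable matrices $\mathcal{B}_k$ and $\mathcal{C}_k$ in \eqref{eq:BnCnasymp}.

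The main obstacle is bookkeeping rather than conceptual: the $n$-dependence enters through two distinct routes --- polynomially through $\psi(z)^n$ and as a $1/n$-series through the $\Pi_k$ --- and one must regroup terms carefully so that in each fixed order of $1/n$ only finitely many contributions survive. Everything else, in particular analyticity at infinity, invertibility of $D(\infty)$, and the identification of the boundary values $U_{\pm 1}$, is already supplied by the earlier statements of the paper.
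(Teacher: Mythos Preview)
Your approach is correct and takes a genuinely different route from the paper's own proof. The paper does not pass through Theorem~\ref{theorem14}; instead it first derives block formulas
\[
B_n=\lim_{z\to\infty}z\bigl(R_{11}^{(n)}(z)-R_{11}^{(n+1)}(z)\bigr),
\qquad
C_n=\lim_{z\to\infty}\Bigl(\tfrac{i}{2}D(\infty)D(\infty)^\ast+zR_{12}^{(n)}(z)\Bigr)
\Bigl(-\tfrac{i}{2}D(\infty)^{-\ast}D(\infty)^{-1}+zR_{21}^{(n)}(z)\Bigr)
\]
by manipulating $U^{(n)}(z)=Y^{(n+1)}(z)Y^{(n)}(z)^{-1}$ through the chain $Y\mapsto T\mapsto S\mapsto R$, and then inserts the expansion \eqref{Rexpansion} and the explicit residues $A^{(1)},B^{(1)}$ from \eqref{eq:A1}--\eqref{eq:B1}. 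In particular, the paper's $C_n$-formula involves the off-diagonal blocks $R_{12},R_{21}$ and produces an a~priori $1/n$ term; the authors must then carry out a separate cancellation argument (comparing the $12$- and $21$-blocks of $A^{(1)}$ and $B^{(1)}$) to conclude $\mathcal{C}_1=0$.

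Your route stays entirely inside the $11$-block: you read $p_n^{(1)},p_n^{(2)}$ off the expansion \eqref{Pnoutside} at $z=\infty$, exploit the evenness of $\psi(z)^n g(z)$ to isolate the $n$-dependence cleanly, and take finite differences. This has two pleasant consequences. First, $\mathcal{B}_2=D(\infty)\pi_1^{(1)}D(\infty)^{-1}$ drops out immediately from \eqref{Pi1def} without reopening the $R$-computation. Second, $\mathcal{C}_1=0$ is automatic: the only growing contribution to $p_n^{(2)}$ is the scalar $-n/4$ from $\psi^n$, whose finite difference is exactly $1/4$, while every matrix contribution sits in a $1/n$-series and loses one order under differencing. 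The price you pay is that you must invoke the double asymptotic property of $R$ (uniformity of \eqref{Rexpansion} near $z=\infty$) to justify extracting Laurent coefficients term by term; you flag this correctly, and it is exactly what the paper supplies in its expansion of $R$. The paper's approach, by contrast, keeps the $R$-matrix explicit throughout and so gives more direct access to higher $\mathcal{B}_k,\mathcal{C}_k$ without having to disentangle the polynomial-in-$n$ pieces from $\psi^n$.
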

The proof is in Section \ref{subsec44}.
The matrices $\mathcal{B}_k$ and $\mathcal{C}_k$ in \eqref{eq:BnCnasymp}
are, in principle, explicitly computable in an iterative manner.
However, the computations become very involved with increasing $k$, and we limit ourselves in Theorem \ref{theorem18}
to the explicit form of $\mathcal{B}_2$. 

If $\alpha_j = \alpha$ and $\beta_j=\beta$ for every $j$,
then \eqref{b2def} simplifies to $\mathcal{B}_2  = \frac{\beta^2-\alpha^2}{4} I_r$, which is  consistent with the formula given in \cite[(1.30)]{KMVV04} for the scalar case. In the scalar case more terms are given in 
\cite[Theorem 1.10]{KMVV04}.

%%%%%%%%%%%%%%%%%%%%%%%%%%%%%%%%

\section{Two examples}\label{sec:examples}
In this section we discuss two examples that arise from the study 
of matrix valued ortho\-gonal polynomials associated to compact symmetric pairs. We find it remarkable that in both
examples the matrix Szeg\H{o} function $D(z)$ can be computed
explicitly. 

\subsection{A Jacobi weight}
Our first example is a family of Jacobi-type matrix orthogonal polynomials which is connected with the matrix valued spherical functions associated to the compact symmetric pair $(\mathrm{SU}(n+1), \mathrm{SU}(n-1))$. This is the result of  a series of papers, starting with \cite{GPT02} and later extended in \cite{RT06, PT07, PR08, PT13}. The weight matrix is given in \cite[Corollary 3.3 and Theorem 3.4]{PT07}. 

Let $\alpha,\beta >-1$, $0<k<\alpha+1$ and $\ell\in \mathbb{N}_0$.  We consider the $(\ell+1)\times (\ell+1)$ weight matrix
\begin{equation}
	\label{eq:weight-Jacobi-NxN}
	W(x) = (1-x)^\alpha (1+x)^\beta  H(x), \quad  H(x) = \Psi(x) T \Psi(x)^T,\qquad x\in[-1,1],
\end{equation}
where $\Psi(x)$ is upper triangular and  $T$ is a constant diagonal matrix. Explicitly, we have
\[ T_{j,j} = \binom{\ell+k-1-j}{\ell-j}\binom{\alpha-k+j}{j},
	\qquad j=0, \ldots, \ell, \]
and
\begin{equation} \label{eq:Psixij} \Psi(x)_{i,j}= \binom{j}{i} 2^{-\frac{\ell-j}{2}-i} (1+x)^{\ell-\frac{j}{2}} (1-x)^{i}, \qquad  0 \leq i \leq j \leq \ell. \end{equation}

We note that the orthogonality interval in \cite{PT07} is $[0,1]$, so  in \eqref{eq:weight-Jacobi-NxN} we have made a change of 
variables to $[-1,1]$ in order to match with the setup in Assumption \ref{assumptions} of the present paper. We have also interchanged the exponents $\alpha$ and $\beta$ in order to be consistent with standard notation for Jacobi polynomials, that we also
follow in this paper, and we take as $\Psi(x)$ the transpose of the corresponding matrix from \cite{PT07}.

The matrix part $H$ of the weight \eqref{eq:weight-Jacobi-NxN} 
has the factorized form
\begin{multline} \label{eq:Hfactors}
	H(x) = \diag\left(1, 1-x, \ldots, (1-x)^{\ell}\right)
	R \\
	\times \diag\left( (1+x)^{\ell}, (1+x)^{\ell-1}, \ldots,
		1+x, 1 \right) 
		R^T \diag\left(1, 1-x, \ldots, (1-x)^{\ell}\right) 
		\end{multline}
with a constant upper triangular matrix $R$ containing the  entries
\[ R_{i,j} = \binom{j}{i} 2^{-\frac{\ell-j}{2}-i} T_{j,j}^{1/2},
	\qquad 
	0 \leq i \leq j \leq \ell. \] 
Thus the entries of $H(x)$ are polynomial in $x$.

For any choice of invertible upper triangular matrix $R$, we can compute the matrix 
Szeg\H{o} function for $H$ explicitly, and this will
allow us to make the asymptotic results explicit for this
class of examples.  

\begin{proposition} \label{prop:example1}
	Let $R$ be any invertible upper triangular matrix. Then
	the matrix Szeg\H{o} function $D_H$ for the matrix weight 
	\eqref{eq:Hfactors} is 	equal to
	\begin{multline} \label{eq:DHexample1} 
		D_H(z) = \diag\left(1, 1-z, \ldots, (1-z)^{\ell}\right)
		R \\
	\times		\diag\left( (z+1)^{\frac{\ell}{2}} \varphi(z)^{- \frac{\ell}{2}},
		(z+1)^{\frac{\ell-1}{2}} \varphi(z)^{-\frac{\ell+1}{2}}, \ldots, \varphi(z)^{-\ell} \right),
	\quad z \in \mathbb C \setminus [-1,1] \end{multline}
	with principal branches of the fractional powers, where
	we recall that $\varphi$ is the conformal map \eqref{phidef}. The matrix  Szeg\H{o} function for $W$ given by 
	\eqref{eq:weight-Jacobi-NxN} is
	\begin{equation} \label{eq:DWexample1} D(z) =  \frac{(z+1)^{\frac{\beta}{2}}(z-1)^{\frac{\alpha}{2}}}{\varphi(z)^{\frac{\alpha+\beta}{2}}}
	D_H(z), \qquad z\in \mathbb C \setminus [-1,1], \end{equation}
with $D_H$ given by \eqref{eq:DHexample1}.
\end{proposition}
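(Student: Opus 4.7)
The plan is to verify directly that the explicit expression \eqref{eq:DHexample1} satisfies the four defining properties of a matrix Szeg\H{o} function listed in Proposition \ref{prop12}: analyticity on $\mathbb{C}\setminus[-1,1]$, the boundary factorization $D_{\pm} D_{\pm}^{\ast}=H$ on $(-1,1)$, pointwise invertibility, and existence of an invertible limit at infinity. By Remark \ref{remark13} the Szeg\H{o} function is only unique up to a constant unitary factor, so it suffices to produce any valid $D_H$; no normalization step is needed. Once $D_H$ is established, \eqref{eq:DWexample1} follows by multiplication with the classical scalar Szeg\H{o} factor $d_s(z)=(z+1)^{\beta/2}(z-1)^{\alpha/2}\varphi(z)^{-(\alpha+\beta)/2}$ for the Jacobi weight $(1-x)^{\alpha}(1+x)^{\beta}$.

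First I would tackle the boundary factorization. Writing $D_H(z)=L(z)\,R\,E(z)$ with $L(z)=\diag(1,1-z,\ldots,(1-z)^{\ell})$ and $E(z)=\diag(e_0,\ldots,e_{\ell})$, where $e_j(z)=(z+1)^{(\ell-j)/2}\varphi(z)^{-(\ell+j)/2}$, the computation of $D_{H,\pm}(x)D_{H,\pm}(x)^{\ast}$ reduces (using that $L(x)$ and $R$ are real on $(-1,1)$) to the scalar identity $e_{j,\pm}(x)\,\overline{e_{j,\pm}(x)}=(1+x)^{\ell-j}$. This in turn rests on the two basic facts $\varphi_{+}(x)\varphi_{-}(x)=1$ and $\overline{\varphi_{+}(x)}=\varphi_{-}(x)$ on $(-1,1)$, together with positivity of $1+x$ there. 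Matching with the explicit factorization \eqref{eq:Hfactors} then recovers $H(x)$ exactly, so $D_{H,\pm}D_{H,\pm}^{\ast}=H$.

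Next I would verify analyticity. Each scalar factor in $D_H$ is analytic off its branch cut: $(z+1)^{(\ell-j)/2}$ along $(-\infty,-1]$ and $\varphi(z)^{-(\ell+j)/2}$ along $[-1,1]$. The only possible singular arc outside $[-1,1]$ is therefore $(-\infty,-1)$, where the jumps of the two factors equal $e^{i\pi(\ell-j)}$ and $e^{-i\pi(\ell+j)}$; their product $e^{-2\pi i j}$ equals $1$ because $j\in\mathbb{Z}$, so $e_j$ and hence $D_H$ extend analytically across $(-\infty,-1)$. Invertibility is immediate from invertibility of $L$, $R$, and $E$. For the limit at infinity, the $(i,j)$ entry is $(1-z)^{i}R_{ij}e_j(z)\sim(-1)^{i}\,2^{-(\ell+j)/2}R_{ij}\,z^{i-j}$; upper triangularity of $R$ kills the potentially divergent entries with $i>j$, entries with $i<j$ decay to $0$, and the diagonal entries contribute $(-1)^{i}\,2^{-(\ell+i)/2}R_{ii}\neq 0$, producing an invertible diagonal $D_H(\infty)$.

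Finally, an analogous branch-jump computation on $(-\infty,-1)$ gives $e^{i\pi\alpha}\,e^{i\pi\beta}\,e^{-i\pi(\alpha+\beta)}=1$, so $d_s$ is analytic on $\mathbb{C}\setminus[-1,1]$; a direct boundary calculation on $(-1,1)$ yields $d_{s,\pm}(x)\overline{d_{s,\pm}(x)}=(1-x)^{\alpha}(1+x)^{\beta}$, and $d_s(z)\to 2^{-(\alpha+\beta)/2}\neq 0$ as $z\to\infty$. Consequently $D=d_s D_H$ inherits all four defining properties of the matrix Szeg\H{o} function for $W$. The main delicate step throughout is the branch-cut bookkeeping along $(-\infty,-1)$; this succeeds precisely because the half-integer exponents in \eqref{eq:DHexample1} add to integers on every factor, and the $\varphi(z)^{-j}$ corrections have been inserted exactly so as to neutralize the $(1-z)^{i}$ growth at infinity against the upper-triangular structure of $R$.
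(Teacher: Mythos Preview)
Your proof is correct and follows essentially the same route as the paper: verify directly that the explicit $D_H$ satisfies the four properties of Proposition~\ref{prop12}, then multiply by the scalar Jacobi Szeg\H{o} factor. The only stylistic difference is in the analyticity step: the paper rewrites $e_j(z)=\bigl((z+1)^{1/2}/\varphi(z)^{1/2}\bigr)^{\ell-j}\varphi(z)^{-j}$ and observes that the ratio $(z+1)^{1/2}/\varphi(z)^{1/2}$ extends analytically across $(-\infty,-1)$, whereas you compute the branch-cut jumps of the two factors separately and check they multiply to $1$. One small wording issue: you state that $\varphi(z)^{-(\ell+j)/2}$ has its branch cut only along $[-1,1]$, but in fact $\varphi(z)\in(-\infty,-1)$ for $z\in(-\infty,-1)$, so the principal fractional power acquires a cut there too; fortunately you then (correctly) compute its jump $e^{-i\pi(\ell+j)}$ on that arc anyway, so the argument is sound despite the slip in phrasing.
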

\begin{proof}
	Let $D_H(z)$ be defined by \eqref{eq:DHexample1}. We show
	that it satisfies the requirements for the matrix Szeg\H{o} function
	of $H$.
	
The diagonal entries in the last factor on the right-hand side
of \eqref{eq:DHexample1} are 
\[ (z+1)^{\frac{\ell-j}{2}} \varphi(z)^{-\frac{\ell+j}{2}}
	= \left(\frac{(z+1)^{1/2}}{\varphi(z)^{1/2}}\right)^{\ell-j}   \varphi(z)^{-j}, \qquad j=0, \ldots, \ell. 	\]
These entries are analytic in $\mathbb C \setminus [-1,1]$,
since $\frac{(z+1)^{1/2}}{\varphi(z)^{1/2}}$, which
may be initially defined for $z \in \mathbb C \setminus (-\infty,1]$,
has an analytic continuation across $(-\infty,-1)$. 
Hence $D_H$ is analytic in $\mathbb C \setminus [-1,1]$. 

For $z \in \mathbb C \setminus [-1,1]$ the factors on the
right-hand side of \eqref{eq:DHexample1} are invertible matrices,
and therefore $D_H(z)$ is invertible for $z \in \mathbb C \setminus [-1,1]$.
As $z \to \infty$, we have $\frac{z+1}{\varphi(z)} \to \frac{1}{2}$
and $(z-1)^i \varphi(z)^{-j} \to \frac{1}{2}$ if $i=j$ and
 $(z-1)^i \varphi(z)^{-j} \to 0$ if $i < j$. 
 Since $R$ is upper triangular, it then follows that $D_H(z)$ tends to a
 a diagonal matrix with nonzero diagonal entries $ (-1)^j 2^{-\frac{\ell+j}{2}} R_{j,j}$,  $j=0, \ldots, \ell$.
Hence $D_H(\infty)$ exists and is invertible as well.  
 
 Finally, the identity $H(x) = D_{H-}(x) D_{H+}(x)^T = D_{H+}(x) D_{H-}(x)^T$ for
 $x \in (-1,1)$  is immediate from \eqref{eq:Hfactors} and \eqref{eq:DHexample1} and the fact that $\varphi_+(x) \varphi_-(x) = 1$ for $x \in (-1,1)$.
 
 The formula \eqref{eq:DWexample1} for the matrix Szeg\H{o} function for $W$ follows from \eqref{eq:DHexample1}, and the fact that the scalar prefactor in \eqref{eq:DWexample1} is
 the Szeg\H{o} function for the standard Jacobi weight $(1-x)^{\alpha}(1+x)^{\beta}$.
\end{proof}

Next we work out the details of the different asymptotic expansions in the case of a $2\times 2$ matrix valued weight, which corresponds to $\ell = 1$ in \eqref{eq:weight-Jacobi-NxN}. Up to an inessential scalar factor $\frac{k}{p}$, we have 
\begin{equation} \label{example1}
	W(x) 	=
	(1-x)^{\alpha}(1+x)^{\beta}H(x),
	\qquad 	H(x)
	=\frac{1}{4} 
	\begin{pmatrix}
		4+2p+2px & 2(1-x) \\  2(1-x) &  (1-x)^2
	\end{pmatrix},
\end{equation}
with $p = k (\alpha+1-k)^{-1} > 0$, and $H(x)$ depends 
on the parameter $p$ only. 

\begin{figure}[t]
	\centering
	\includegraphics[width=\linewidth]{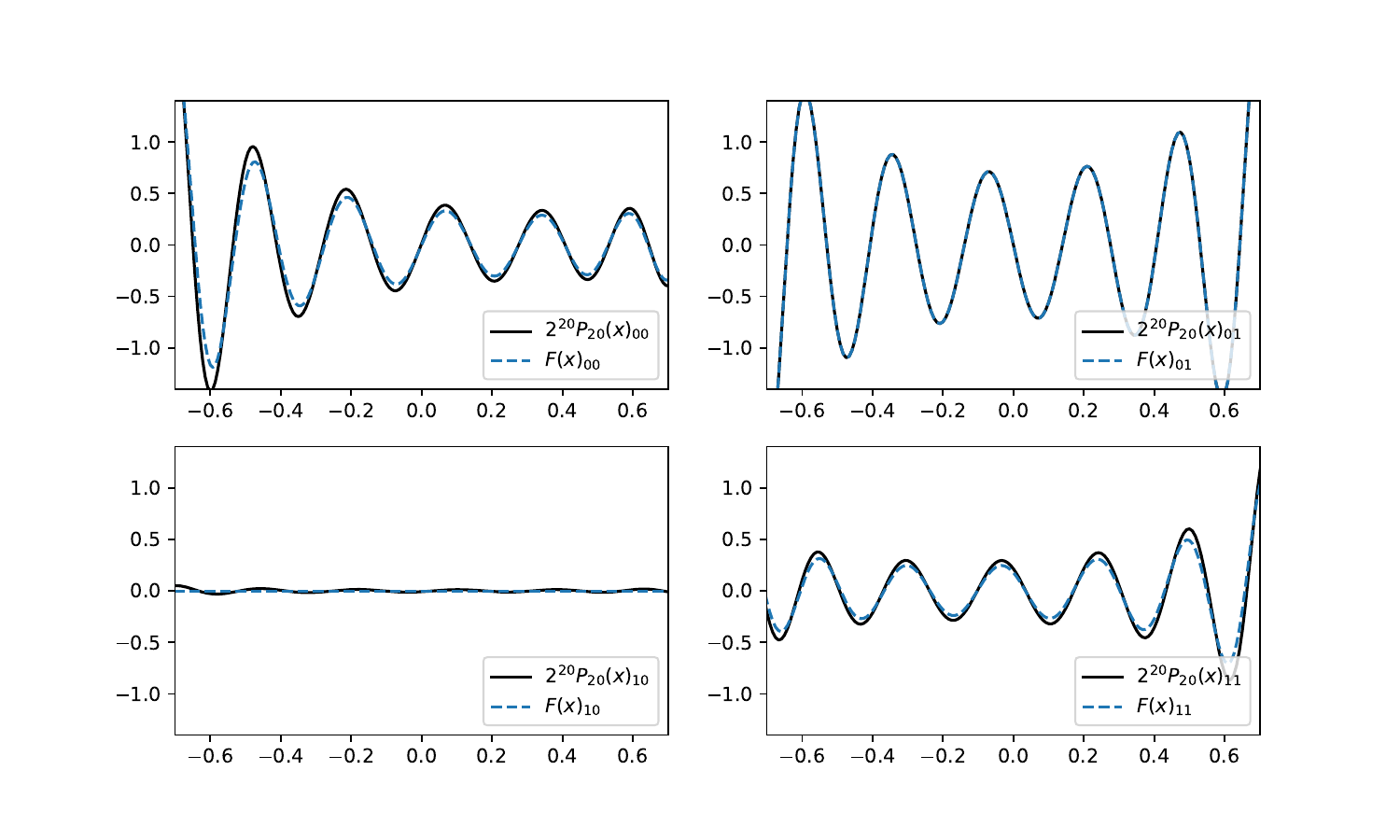}
	\caption{ Plot of the entries of the scaled Jacobi MVOP  $2^{n}P_n(x)$ for $n=20$ and $(\alpha,\beta,k)=(1,2,1)$ (solid line). The approximation $F^{\rm inner}(x)$ (dashed line) is given in formula \eqref{eq:F_Jacobi_inner} of Corollary \ref{cor:asymp-Jacobi2}. We observe that the $(2,1)$ entry is of order $\mathcal{O}(n^{-1})$, since the $(2,1)$ entry of the leading term in \eqref{eq:F_jacobi_outer} is $0$.}
	\label{fig:jacobi_inner}
\end{figure}

\begin{corollary}
\label{cor:asymp-Jacobi2}
	The monic MVOP $P_n$ associated with the weight matrix 
	\eqref{example1} has the following asymptotic behavior as $n \to \infty$:
	\begin{enumerate}
	    \item[\rm (a)] For $z \in \mathbb C \setminus [-1,1]$,
	\begin{equation}
	\label{eq:Jacobi_outer}
	\frac{2^n P_n(z)}{\varphi(z)^n} 
	= 
	F^{\rm outer}(z) \left(I_2 + \mathcal{O}(n^{-1})\right),
	\end{equation}
	where 
	\begin{equation}
	\label{eq:F_jacobi_outer}
	F^{\rm outer}(z) = 	\left(\frac{\varphi(z)}{2}\right)^{\frac{\alpha+\beta+1}{2}}
\frac{1}
{2(z-1)^{\frac{2\alpha+3}{4}}
		(z+1)^{\frac{2\beta+3}{4}}
		(1-\varphi(z))}
			\begin{pmatrix}
			(\varphi(z)-1)^2 & 4\varphi(z)\\
			0 & \varphi(z)+1
		\end{pmatrix}.
	\end{equation}
    \item[\rm (b)] For $x\in(-1,1)$,
	\begin{equation}
	\label{eq:Jacobi_inner}
	2^n P_n(x) 
	= 
	F^{\rm inner}(x)+ \mathcal{O}(n^{-1}),\qquad x\in(-1,1),
	\end{equation}
	where
	\begin{equation} \label{eq:F_Jacobi_inner}
		F^{\rm inner}(x) = \frac{2^{- \frac{\alpha+\beta}{2}}}
		{(1-x)^{\frac{2\alpha+3}{4}}(1+x)^{\frac{2\beta+3}{4}} } \\
			\begin{pmatrix}
				(1-x) \cos( \gamma(x)) &-2  \cos(\gamma(x))\\ 
				0 & -\frac{1}{\sqrt{2}} \sqrt{1+x} \cos\left(\gamma(x)+\frac{\theta(x)}{2}\right)
			\end{pmatrix},	
	\end{equation}
	with $\theta(x)=\arccos x$ and
	\begin{equation}\label{eq:gammax}
		\gamma(x)
		=
		\left(n+1+\frac{\alpha+\beta}{2}\right)\arccos x-\frac{\alpha\pi}{2}-\frac{\pi}{4}.
	\end{equation}
	\item[\rm (c)] Mehler--Heine asymptotics near $z=1$:
\begin{multline*}
\lim_{n\to\infty} \frac{2^{n}}{\sqrt{n\pi}}  P_n\left(\cos \frac{\theta}{n} \right) Q\left(\cos \frac{\theta}{n}\right) 
	\diag\left(c_1^{1/2} n^{-\alpha},c_2^{1/2} n^{-\alpha-2} \right)\\
=
\frac{2^{-\frac{\alpha+\beta}{2}-2}}{\sqrt{1+p}}
\begin{pmatrix}
2p & 2\sqrt{p}\\
-1 & \sqrt{p}
\end{pmatrix}
\begin{pmatrix}
\theta^{-\alpha} J_{\alpha}(\theta) & 0\\
0 & \theta^{-\alpha-2}J_{\alpha+2}(\theta)
\end{pmatrix},
\end{multline*}
where the constants are
\[
c_1=
2^{-\alpha+\beta} \frac{1+p}{p},\qquad
c_2
=
2^{-\alpha-4+\beta} \frac{p}{1+p}.
\]
\end{enumerate} 
\end{corollary}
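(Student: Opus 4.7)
The plan is to specialize Proposition~\ref{prop:example1} with $\ell=1$, together with Theorems~\ref{theorem14}, \ref{theorem15}, and \ref{theorem17}, to the weight \eqref{example1}, and to carry out the resulting algebra. First I match the factored form \eqref{eq:Hfactors} to the concrete $H$ of \eqref{example1}: writing $R=\bigl(\begin{smallmatrix}a & b \\ 0 & c\end{smallmatrix}\bigr)$ and expanding $\diag(1,1-x)R\diag(1+x,1)R^{T}\diag(1,1-x)$ entrywise, the equations $a^{2}=p/2$, $b^{2}=1$, and $bc=1/2$ pin down $R=\bigl(\begin{smallmatrix}\sqrt{p/2} & 1 \\ 0 & 1/2\end{smallmatrix}\bigr)$ up to sign. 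Proposition~\ref{prop:example1} then gives closed expressions for $D_H(z)$ and $D(z)$; in particular $D(z)^{-1}$ is obtained by inverting a $2\times 2$ upper-triangular matrix and $D(\infty)=2^{-(\alpha+\beta)/2}\diag(\sqrt{p}/2,-1/4)$.

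For part (a), I insert these formulas into the leading term $\tfrac{\varphi(z)^{1/2}}{\sqrt{2}(z^{2}-1)^{1/4}}D(\infty)D(z)^{-1}$ of \eqref{Pnoutside} and simplify. The reshaping into \eqref{eq:F_jacobi_outer} is an application of the identities $(\varphi(z)\pm 1)^{2}=2\varphi(z)(z\pm 1)$, which follow from $\varphi(z)+\varphi(z)^{-1}=2z$; they rewrite $\sqrt{z\pm 1}$ as $(\varphi(z)\pm 1)/\sqrt{2\varphi(z)}$, convert the fractional exponents of $z\pm 1$ into the $(2\alpha+3)/4$ and $(2\beta+3)/4$ shape stated, and extract the matrix $\bigl(\begin{smallmatrix}(\varphi-1)^{2} & 4\varphi \\ 0 & \varphi+1\end{smallmatrix}\bigr)/(1-\varphi)$.

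For part (b), I use Theorem~\ref{theorem15} in its real-symmetric form \eqref{eq:innerasymp}. The boundary value $D_+(x)$ is computed from the explicit $D(z)$ by $\varphi_+(x)=e^{i\arccos x}$ and $(x-1)^{\alpha/2}_+=e^{i\alpha\pi/2}(1-x)^{\alpha/2}$. Multiplying $D_+(x)^{-1}$ by the Szeg\H{o} phase $e^{i((n+1/2)\arccos x-\pi/4)}$ and taking real parts collapses the $(1,1)$ and $(1,2)$ entries to the common factor $\cos(\gamma(x))$ with $\gamma$ as in \eqref{eq:gammax}, while the $(2,2)$ entry inherits the extra phase $e^{-i\arccos(x)/2}$ present in the $(1,1)$ entry of $D_H(x)^{-1}$ (via the factor $\sqrt{(x+1)/\varphi_+(x)}$), producing the shifted argument $\gamma(x)+\theta(x)/2$.

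For part (c), I apply Theorem~\ref{theorem17}. The eigenvalues of $H(x)$ are read off from $\Tr H(x)=1+\tfrac{p(1+x)}{2}+\tfrac{(1-x)^{2}}{4}$ and $\det H(x)=\tfrac{p(1+x)(1-x)^{2}}{8}$: close to $x=1$ one has $\lambda_1(x)=(1+p)+O(1-x)$ and $\lambda_2(x)=\tfrac{p(1-x)^{2}}{4(1+p)}+O((1-x)^{3})$, so $n_1=0$, $n_2=2$, $\alpha_1=\alpha$, $\alpha_2=\alpha+2$, and \eqref{eq:cj} delivers $c_1$ and $c_2$. The rotation $U_1=\lim_{z\to 1}\varphi(z)^{(\alpha+\beta)/2}D_H(z)^{-1}Q(z)\widetilde\Lambda(z)^{1/2}$ is then computed column by column, and here lies the main obstacle: since $D_H(z)^{-1}$ has entries of order $(z-1)^{-1}$ near $z=1$ while $\widetilde\lambda_2^{1/2}$ vanishes to first order, the first-order correction $q_1(x)=\bigl(1,\tfrac{1-x}{2(1+p)}\bigr)^{T}+O((1-x)^{2})$ of the non-vanishing eigenvector, extracted from $H(x)q_1(x)=\lambda_1(x)q_1(x)$, is needed to produce a finite contribution to the first column of $U_1$ through the singular entries of $D_H(z)^{-1}$; a naive evaluation at $Q(1)=I$ gives a non-unitary answer. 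The resulting $U_1=\tfrac{1}{\sqrt{1+p}}\bigl(\begin{smallmatrix}\sqrt{p} & 1 \\ 1 & -\sqrt{p}\end{smallmatrix}\bigr)$ is unitary, and forming $D(\infty)U_1$ recovers the matrix displayed on the right-hand side of part (c).
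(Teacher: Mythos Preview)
Your proposal is correct and follows essentially the same route as the paper: specialize Proposition~\ref{prop:example1} at $\ell=1$ to obtain $D_H$, $D$, and $D(\infty)$ explicitly, then feed these into Theorems~\ref{theorem14}, \ref{theorem15}, and \ref{theorem17} for parts (a), (b), (c) respectively, with the same use of the identities $(\varphi\pm 1)^2=2\varphi(z\pm 1)$ to massage the scalar prefactors.

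The one place your computation diverges from the paper's is the evaluation of $U_1$ in part (c). The paper writes the eigenvector matrix $Q(z)$ in closed rational form via $\rho_{1,2}(z)=\bigl(2\lambda_{1,2}(z)-2-p-pz\bigr)/(z-1)$ and then passes to the limit $z\to 1$ directly in $D(z)^{-1}V(z)$, so no Taylor expansion of eigenvectors is needed. Your approach---Taylor-expanding the non-vanishing eigenvector to first order in $1-x$ so that its correction term survives against the $(z-1)^{-1}$ singularity in $D_H(z)^{-1}$---is a valid alternative and makes the mechanism behind the finite limit more transparent, at the cost of a slightly more delicate bookkeeping. Both land on the same unitary $U_1=(1+p)^{-1/2}\bigl(\begin{smallmatrix}\sqrt{p}&1\\1&-\sqrt{p}\end{smallmatrix}\bigr)$.
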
	

See Figure~\ref{fig:jacobi_inner} for a plot of the four
entries of $2^nP_n(x)$ on the interval $[-1,1]$ for
the value $n=20$, together with a plot of the entries
of the approximation \eqref{eq:F_Jacobi_inner}. 

\begin{proof}
The matrix Szeg\H{o} function for the $2\times 2$ weight $W$ from \eqref{example1} is
\begin{equation}\label{eq:Dp_example1}
D(z) = 
	\frac{(z-1)^{\frac{\alpha}{2}}
	(z+1)^{\frac{\beta}{2}} }{\varphi(z)^{\frac{\alpha + \beta}{2}}}
	D_H(z), \qquad
    %\] where
	%	\begin{equation} \label{eq:Dp_example1}
			D_H(z) = 
		\frac{1}{4} 
			\begin{pmatrix} 2 \sqrt{p} (1+\varphi(z)^{-1}) & 4\varphi(z)^{-1} \\ 0 & -(1-\varphi(z)^{-1})^2 \end{pmatrix}.
\end{equation}
This follows from \eqref{eq:DHexample1} with $\ell=1$:
after  removing the scalar factor $\sqrt{k}/\sqrt{p}$, we obtain
	\[ D_H(z) = \frac{1}{4} 
	\begin{pmatrix} 
	 2 \sqrt{2 p}	\sqrt{z+1} \varphi(z)^{-1/2} & 4 \varphi(z)^{-1} \\
		0 & 2(1-z) \varphi(z)^{-1}	\end{pmatrix}. \]
	This leads to \eqref{eq:Dp_example1} because of the two
	identities
	$ \sqrt{2} \sqrt{z+1} = \varphi(z)^{1/2} + \varphi(z)^{-1/2}$ and
	 $ 2z=\varphi(z) + \varphi(z)^{-1}$.
From \eqref{eq:Dp_example1}, we obtain the limit behavior
\begin{equation} \label{eq:Dinf_example1}
	D(\infty)= 2^{-\frac{\alpha+\beta}{2}} D_H(\infty) = 
	2^{-\frac{\alpha+\beta}{2}-2} \begin{pmatrix} 2 \sqrt{p} & 0 \\
		0 & - 1 \end{pmatrix}.	\end{equation}
With this information, we can apply Theorem \ref{theorem14} to obtain the outer asymptotics of $P_n$ as stated in part (a).
\medskip 

For the inner asymptotics, we note that the weight $W(x)$ given by \eqref{example1} is real symmetric on $[-1,1]$, so by \eqref{eq:innerasymp} we only need $D_+(x)^{-1}$.
Write $\varphi_{\pm}(x)=e^{\pm i\theta(x)}$ with $\theta(x)=\arccos(x)$.
From \eqref{eq:Dp_example1}, we obtain 
\begin{align*}
D_{+}(x)
& =
%\textcolor{red}{\sqrt{\frac{k}{p}}}
e^{-\frac{i}{2} \left((\alpha+\beta) \theta(x)-\alpha\pi\right)}
(1-x)^{\frac{\alpha}{2}} (1+x)^{\frac{\beta}{2}}
	\begin{pmatrix}
		\sqrt{\frac{p}{2}} \sqrt{1+x} e^{-i \theta(x)/2}  &
		e^{-i\theta(x)}  \\
		0 & \frac{1}{2}(1-x) e^{-i \theta(x)}
	\end{pmatrix},
\end{align*}
and therefore
\[  D_{+}(x)^{-1}
	=
	\frac{\sqrt{2}\, e^{\frac{i}{2}\left((\alpha+\beta+1)\theta(x)-\alpha\pi\right)}}
	{\sqrt{p} (1+x)^{\frac{1+\beta}{2}} (1-x)^{1+\frac{\alpha}{2}}}
	\begin{pmatrix}
		1-x & -2 \\
		0 & \sqrt{2p}\sqrt{1+x}e^{i \theta(x)/2}
	\end{pmatrix}.
	\]
Hence 
\begin{multline*}
	\Re \left(e^{i\left(n+\frac{1}{2}\right)\theta(x)-\frac{\pi i}{4}} D_+(x)^{-1} \right) \\
	=
	\frac{\sqrt{2}}{\sqrt{p} 
	(1+x)^{\frac{1+\beta}{2}} (1-x)^{1+\frac{\alpha}{2}}}
	\begin{pmatrix}
		(1-x) \cos( \gamma(x)) &-2 \cos(\gamma(x))\\ 
		0 & \sqrt{2p} \sqrt{1+x} \cos\left(\gamma(x)+\frac{\theta(x)}{2}\right)
		\end{pmatrix},	
		\end{multline*}
		where the phase function $\gamma(x)$ is given by \eqref{eq:gammax}. Also $D(\infty) = 2^{-\frac{\alpha+\beta}{2}} D_{H}(\infty)$ and $D_H(\infty)$ is given by \eqref{eq:Dinf_example1}. 
	Using this in the formula \eqref{eq:innerasymp} of Theorem \ref{theorem15}, we obtain the result of part (b). 

\medskip
The Mehler--Heine asymptotics of part (c) follows from Theorem \ref{theorem17}. The eigenvalues $\lambda_{1,2}$ of the matrix $H(x)$ in \eqref{example1} can be computed explicitly, and as $x\to\pm 1$, they behave as follows:
		\begin{equation}\label{eq:eigsHp_asymp}
			\begin{aligned}
				\lambda_1(x)=1+p+\mathcal{O}(x-1),\qquad
				\lambda_1(x)&=2+\mathcal{O}(x+1),\\
				\lambda_2(x)=\frac{p}{4(1+p)}(x-1)^2+\mathcal{O}((x-1)^3),\qquad
				\lambda_2(x)&=\frac{p}{4}(x+1)+\mathcal{O}((x+1)^2).
			\end{aligned}
		\end{equation}
Therefore, the exponents are
$\alpha_1=\alpha$ and $\alpha_2=\alpha+2$, and the constants from \eqref{eq:cj} are
\[
\begin{aligned}
    c_1&=2^{-\alpha_1+\beta}\lim_{x\to 1}\frac{\lambda_1(x)}{(1-x)^{n_1}}
    =
    2^{-\alpha+\beta}\lim_{x\to 1}\lambda_1(x)=2^{-\alpha+\beta} \frac{1+p}{p},\\
    c_2&=2^{-\alpha_2+\beta}\lim_{x\to 1}\frac{\lambda_2(x)}{(1-x)^{n_2}}
    =
    2^{-\alpha-2+\beta}\lim_{x\to 1}\frac{\lambda_2(x)}{(1-x)^2}
    =
    2^{-\alpha-4+\beta} \frac{p}{1+p}.
\end{aligned}
\]
We can also calculate the matrix $U_1$ using the explicit expressions for $D_H(z)$ using \eqref{eq:Dp_example1}, as well as $V(z)$:
\[
\begin{aligned}
    V(z)&=(z-1)^{\alpha/2}(z+1)^{\beta/2} Q(z)\tilde{\Lambda}(z)^{1/2},
 \end{aligned}
\]
where $\tilde{\Lambda}(z)^{1/2}=\diag (\lambda_1^{1/2}(z),\lambda_2^{1/2}(z))$ and we use the normalised matrix of eigenvectors:
		\[
		Q(z)
		=
		\begin{pmatrix}
			\frac{1}{\sqrt{1+\rho_2(z)^2}} & -\frac{1}{\sqrt{1+\rho_1(z)^2}}\\
			-\frac{\rho_{2}(z)}{\sqrt{1+\rho_2(z)^2}} & \frac{\rho_1(z)}{\sqrt{1+\rho_1(z)^2}}
		\end{pmatrix},\qquad
		\rho_{1,2}(z)=
		\frac{2\lambda_{1,2}(z)-2-p-pz}{z-1}. \]
Using this information, we can calculate the matrix $U_1$ given by \eqref{U1def} explicitly:
\begin{equation} \label{U1Jacobi}
U_1
	=\lim_{z\to 1} D^{-1}(z)V(z)=
\frac{1}{\sqrt{1+p}}
\begin{pmatrix}
\sqrt{p} & 1\\
1 &-\sqrt{p}
\end{pmatrix}.
\end{equation}
Then, we combine it with $D(\infty)$ in \eqref{eq:Dinf_example1}, and the right hand side of the Mehler--Heine asymptotic formula
\eqref{eq:MHgeneral} becomes 
\begin{multline*}
D(\infty)A\,\diag \left(\theta^{-\alpha_1}J_{\alpha_1}(\theta),\theta^{-\alpha_2}J_{\alpha_2}(\theta)\right)\\
%=
%2^{-\frac{\alpha+\beta}{2}-2}
=
\frac{2^{-\frac{\alpha+\beta}{2}-2}}{\sqrt{1+p}}
\begin{pmatrix}
2p & 2\sqrt{p}\\
-1 & \sqrt{p}
\end{pmatrix}
\begin{pmatrix}
\theta^{-\alpha} J_{\alpha}(\theta) & 0\\
0 & \theta^{-\alpha-2}J_{\alpha+2}(\theta)
\end{pmatrix},
\end{multline*}
which completes the proof of part (c).
\end{proof}

By a zero of a matrix valued polynomial $P_n$, one commonly means a zero 
of the determinant of $P_n$. If $P_n$ is a matrix valued orthogonal
polynomial with respect to an a.e. positive definite weight matrix on $[-1,1]$,
then it is known that all zeros of $P_n$ are in $[-1,1]$.
The multiplicity is at most $r$ if $r$ is the size of $P_n$, see \cite[Theorem 1.1]{DL96}.

From \eqref{eq:F_Jacobi_inner} one gets an asymptotic formula
for $\det P_n(x)$, $x \in (-1,1)$ as $n \to \infty$.
% The determinant of the matrix part of \eqref{eq:F_Jacobi_inner} is
% simply $- \frac{1}{\sqrt{2}} (1-x) \sqrt{1+x} \cos (\gamma(x))
% \cos(\gamma(x) + \frac{\theta(x)}{2}) + \mathcal{O}(n^{-1})$.
From the determinant of the matrix part of \eqref{eq:F_Jacobi_inner}, we conclude that to leading order the zeros of $P_n$
come from the solutions of $\cos(\gamma(x)) = 0$ and
$\cos(\gamma(x) + \frac{\theta(x)}{2}) = 0$. That is
\begin{equation}
    \label{eq:zeros_J_group1}
 x = \cos \left(\frac{\frac{\alpha}{2}\pi + \frac{3\pi}{4} + k \pi}{n+1+\frac{\alpha+\beta}{2}} \right), \qquad k \in \mathbb Z
 \end{equation}
and
\begin{equation}
\label{eq:zeros_J_group2}
x = \cos \left(\frac{\frac{\alpha}{2}\pi + \frac{3\pi}{4} + k \pi}{n+\frac{3}{2}+\frac{\alpha+\beta}{2}} \right), \qquad k \in \mathbb Z.
\end{equation}
The zeros come in two groups, see Figure \ref{fig:zeros_Jacobi}.

\begin{figure}[t]
	\centering
	\includegraphics[width=0.8\linewidth]{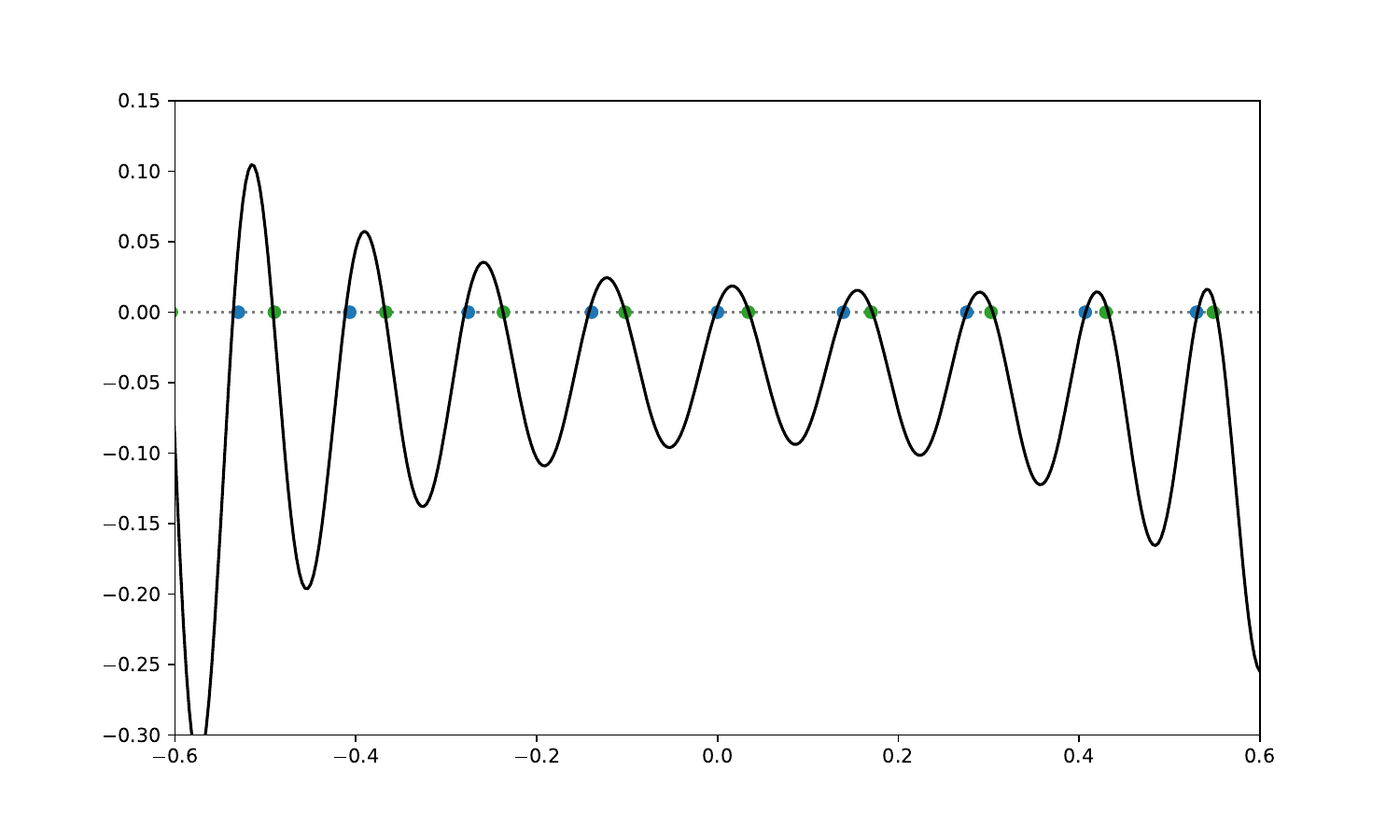}
	\caption{Plot of the determinant of the scaled Jacobi MVOP $2^{20}P_{20}$ for $(\alpha,\beta, k) = (1,2,1)$ (solid line). The blue dots are solutions of \eqref{eq:zeros_J_group1} and the green dots are solutions of \eqref{eq:zeros_J_group2}.}
	\label{fig:zeros_Jacobi}
\end{figure}

%\begin{figure}[t]
%	\centering
%	\includegraphics[width=\linewidth]{Jacobi-MH}
%	\caption{Example 1. Plot of the Mehler--Heine formula for Jacobi MVOPs.}
%	\label{fig:jacobi_MH}
%\end{figure}

\begin{remark}
Using formula \eqref{eq:Hfactors}, in this example we can actually calculate the order of vanishing of the eigenvalues at $z=\pm 1$ for general $\ell$: $\alpha_k = \alpha + 2k-2$, $\beta_k=\beta + k-1$ for $k=0, \ldots, \ell$.
\end{remark}

Regarding the recurrence coefficients, we have the following result:
\begin{corollary}
    The recurrence coefficients for the Jacobi weight \eqref{example1} have the following asymptotic behavior as $n\to\infty$:
    $B_n =  \frac{\mathcal{B}_2}{n^2} + \mathcal{O}(n^{-3})$ 
    with 
	\begin{align} \label{eq:BnCnGegenbauer1}
		\mathcal{B}_2 & = \frac{1}{4} 
		\begin{pmatrix} (\beta+1)^2 - \alpha^2 & 0 \\ 0 
			& \beta^2 - (\alpha+2)^2 \end{pmatrix}  - \frac{\alpha+1}{1+p}
			\begin{pmatrix} 1 & 2 p \\
				\frac{1}{2} & -1 \end{pmatrix}  
	\end{align}
and $C_n = \frac{1}{4} I_2 + \mathcal{O}(n^{-2})$.
\end{corollary}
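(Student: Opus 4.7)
The plan is to apply Theorem~\ref{theorem18} directly to the $2\times 2$ Jacobi weight \eqref{example1}. The asymptotic $C_n = \tfrac14 I_2 + \mathcal{O}(n^{-2})$ is immediate from \eqref{eq:BnCnasymp}, and for $B_n$ we need only identify the constant $\mathcal{B}_2$ via \eqref{b2def}. This means we have to pin down five pieces of data: the exponents $\alpha_1,\alpha_2,\beta_1,\beta_2$, the limit $D(\infty)$, and the two unitary matrices $U_1$ and $U_{-1}$.

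Most of the ingredients at the right endpoint are already on the table from the proof of Corollary~\ref{cor:asymp-Jacobi2}. From \eqref{eq:eigsHp_asymp} the orders of vanishing of $\lambda_1,\lambda_2$ at $x=1$ are $n_1=0$ and $n_2=2$, so $\alpha_1=\alpha$ and $\alpha_2=\alpha+2$; the limit $D(\infty)$ is given by \eqref{eq:Dinf_example1}; and the matrix $U_1$ is recorded in \eqref{U1Jacobi}. Observe that $U_1$ is real, symmetric and orthogonal, so $U_1^{-1}=U_1$, which will keep the conjugation $U_1 \diag(4\alpha^2-1,\,4(\alpha+2)^2-1)U_1^{-1}$ elementary; the conjugation by the diagonal matrix $D(\infty)$ then only rescales the off-diagonal entries by $\mp 2\sqrt{p}$ and $\mp 1/(2\sqrt{p})$. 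Combined with the identity $(\alpha+2)^2-\alpha^2=4(\alpha+1)$, this produces the term proportional to $\frac{\alpha+1}{1+p}\bigl(\begin{smallmatrix} 1 & 2p \\ \tfrac12 & -1\end{smallmatrix}\bigr)$ in \eqref{eq:BnCnGegenbauer1}, together with a diagonal correction.

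The missing piece, and the main technical step, is the left-endpoint data. From \eqref{eq:eigsHp_asymp} we read off $m_1=0$ and $m_2=1$, hence $\beta_1=\beta$ and $\beta_2=\beta+1$. To compute $U_{-1}=\lim_{z\to -1}D(z)^{-1}\widehat V(z)$ one repeats the right-endpoint procedure at $z=-1$: diagonalize $H(z)$ analytically near $-1$ (the eigenvectors can be read off from the $\rho_{1,2}(z)$ formula used for $Q(z)$ in the proof of Corollary \ref{cor:asymp-Jacobi2}), form $\widehat V$ using \eqref{Vhatdef} with the modified eigenvalues $\widehat\lambda_j=(-1)^{m_j}\lambda_j$ (so $\widehat\lambda_2=-\lambda_2$), and pair this against $D(z)^{-1}$ given by \eqref{eq:Dp_example1} in a Laurent expansion about $z=-1$, keeping only terms that survive the limit. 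The chief obstacle is bookkeeping: the factors $(1-z)^{\alpha/2}$, $(-1-z)^{\beta/2}$, $\varphi(z)^{\pm(\alpha+\beta)/2}$ and the square roots of the vanishing eigenvalue all have to be expanded consistently, and one must verify that the resulting $U_{-1}$ is unitary (which is guaranteed by Lemma~\ref{lemma17} but provides a useful sanity check).

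Once $U_{-1}$ is in hand, the second summand of \eqref{b2def} is evaluated by the same two-step conjugation (first by $U_{-1}$, then by the diagonal $D(\infty)$), and $4(\beta+1)^2-1-(4\beta^2-1)=8(2\beta+1)$ is used to collect the diagonal contribution $\tfrac14\diag((\beta+1)^2-\alpha^2,\,\beta^2-(\alpha+2)^2)$ and the off-diagonal correction. Adding the two endpoint contributions then yields \eqref{eq:BnCnGegenbauer1} after algebraic simplification. The statement $C_n=\tfrac14 I_2+\mathcal{O}(n^{-2})$ is just the generic form in \eqref{eq:BnCnasymp} specialized to $r=2$, with no further computation required.
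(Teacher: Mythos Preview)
Your approach is exactly the paper's: invoke Theorem~\ref{theorem18}, reuse the right-endpoint data $(\alpha_1,\alpha_2,D(\infty),U_1)$ from Corollary~\ref{cor:asymp-Jacobi2}, and then compute the missing left-endpoint matrix $U_{-1}$ before plugging everything into \eqref{b2def}. The paper carries out the $U_{-1}$ computation you outline and obtains the very simple value $U_{-1}=\bigl(\begin{smallmatrix}0&-1\\-1&0\end{smallmatrix}\bigr)$; since this is a permutation, the conjugation $U_{-1}\diag(4\beta^2-1,\,4(\beta+1)^2-1)U_{-1}^{-1}$ merely swaps the diagonal entries, so the $z=-1$ contribution to $\mathcal{B}_2$ is purely diagonal---contrary to your anticipation of an ``off-diagonal correction'' from that endpoint, the entire off-diagonal part of \eqref{eq:BnCnGegenbauer1} comes from the $z=1$ term.
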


\begin{proof} In view of Theorem \ref{theorem18} we only need
	to verify the expression \eqref{eq:BnCnGegenbauer1}.
    We have the matrix $U_1$ from
    \eqref{U1Jacobi}, and we can similarly compute
    \[
    U_{-1}
    =\lim_{z\to -1} D^{-1}(z)\widehat{V}(z)
    =
    \begin{pmatrix}
    0 & -1 \\ -1 & 0
    \end{pmatrix}.    
     \]
     Therefore, we obtain from this and \eqref{eq:Dinf_example1}
     \[
     \begin{aligned}
    D(\infty) U_1
    =
    \frac{2^{-\frac{\alpha+\beta}{2}-2}}{\sqrt{1+p}}
    \begin{pmatrix}
    2p & -2\sqrt{p}\\
    -1 & -\sqrt{p}
    \end{pmatrix}, \qquad 
    D(\infty) U_{-1}
    =
    2^{-\frac{\alpha+\beta}{2}-2}
    \begin{pmatrix}
    0 & -2\sqrt{p}\\
    1 & 0
    \end{pmatrix}.
    \end{aligned}
    \]
    From \eqref{eq:eigsHp_asymp} we have $\alpha_1 = \alpha$, 
    $\alpha_2= \alpha+2$, $\beta_1=\beta$ and $\beta_2=\beta+1$.
    Using all this in \eqref{b2def}, we obtain the expression 
    \eqref{eq:BnCnGegenbauer1} for $\mathcal{B}_2$.
\end{proof}

\begin{remark}
The previous result is consistent with the explicit recurrence coefficients for matrix Jacobi polynomials. However these coefficients have rather complicated expressions. They can be calculated using the approach of shift operators given in \cite{KdlRR17} and by extensive use of Maple. 
For reasons of space, we omit the expression of $B_n$. The coefficient $C_n$ has a closed factorized form given by
\begin{multline*}
C_n = \frac{4n(\alpha+n+1)}{(k+n)(\alpha+\beta+2n+1)(\alpha+\beta+2n+2)}  \begin{pmatrix} 1 & 0 \\ \frac {\alpha-k+1}
{\alpha+n+\beta-k+2} & 1 \end{pmatrix} \\
\times \begin{pmatrix} 
\frac {(\alpha+\beta+n+1)
 (\alpha+\beta+n-k+2)(\beta+n)(k+n-1)}{(\alpha+\beta+2\,n+1)(\alpha+\beta+2\,n)(\alpha+\beta+n+1-
k)} & -\frac{k}{(k+n)}
\\ 0 & \frac {(\alpha+\beta+n+2)(\alpha+\beta+n+1-k)(\beta+n+1)(k+n+1)}{
(\alpha+\beta+n-k+2)(\alpha+\beta+2\,n+2)(\alpha + \beta+2\,n+3)} 
\end{pmatrix} \\
\times
 \begin{pmatrix} 1 & 0 \\ -\frac {\alpha-k+1
}{\alpha+n+1+\beta-k} & 1 
\end{pmatrix}.
\end{multline*}
The coefficient $C_n$ is written in terms of the parameter $k$, which is related to $p$ as in \eqref{example1}.
It indeed satisfies $C_n = \frac{1}{4} I_2 + \mathcal{O}(n^{-2})$
as $n \to \infty$.
 \end{remark}

\subsection{A Gegenbauer weight}
The second example is a family of matrix valued Gegenbauer-type polynomials,
introduced in \cite{KdlRR17} and is a one parameter extension of
\cite{KvPR12, KvPR13}. 
Let $K$ be the
constant matrix
\begin{equation} \label{eq:K} K_{i,j}=K_i(j,1/2,2\ell),
	\qquad i,j\in \{0,\ldots,2\ell\}.
\end{equation}
where $K_n(x,p,N)$ is the Krawtchouk polynomial, see e.g.~\cite{KLS2010} or \cite[\S 18.19]{DLMF}. For $\ell\in \frac12 \mathbb{N}_0$ and $\nu>0$, we consider $(2\ell+1)\times(2\ell+1)$ weight matrix
\begin{equation}
	\label{eq:form_weight_Chebyshev}
	W(x)= (1-x^2)^{\nu-\frac12} H(x), \qquad H(x) = %(1+x)^{2\ell} 
	\Psi(x)T\Psi(x)^\ast, \qquad x\in(-1,1),
\end{equation}
where $\Psi(x) = K\Upsilon(x)K$ and $\Upsilon(x)$, $T$ are diagonal matrices with entries 
\begin{align*} \Upsilon(x)_{j,j} & =
e^{\frac{j\pi i}{2}}
\binom{2\ell}{j}(1-x)^{\frac{j}{2}}(1+x)^{\ell-\frac{j}{2}}, \\
T_{j,j} &=  2^{-6\ell-1} \frac{(2\nu+2\ell)_{2\ell+1}} {(\nu+\tfrac12)_{2\ell}}\sum_{j=0}^{2\ell} \binom{2\ell}{j} \frac{(\nu)_j}{(\nu+2\ell-j)_j}.
\end{align*}
This factorization for the weight matrix is taken from \cite[Theorem 3.1]{vPR16}. The weight $W$ coincides with that in \cite[Definition 2.1]{KdlRR17} for $\nu>0$ and with that in \cite{KvPR12} for $\nu=1$. 
%We note that the expression of $\Psi$ differs from the matrix $\Psi_0$ in \cite{vPR} by a factor $(1+x)^{\ell}$.
The matrix $H$ is a matrix polynomial in $x$. This follows from \cite{KdlRR17} or from a direct computation using the above expressions. Note that the even diagonal entries of $\Upsilon$ are real and the odd diagonal entries are purely imaginary. The fact that $H$ has polynomial entries with real coefficients relies on particular properties of the Krawtchouk polynomials in the entries of the matrix $K$ and the matrix $\Upsilon$, see \cite[Corollary 3.7, Remark 3.8]{vPR16}.

Let $\xi^{\frac{j}{2}}(z)$ be the function 
$$\xi^{\frac{j}{2}}(z) = \left( \frac{z-1}{z+1} \right)^{\frac{j}{2}}, \qquad j=0,\ldots,2\ell,$$
with principal branches of the fractional powers, so that for $x\in(-1,1)$, we have 
$$\xi^{\frac{j}{2}}_\pm(x) = e^{\pm\frac{j\pi i}{2}} \left( \frac{1-x}{1+x} \right)^{\frac{j}{2}},$$
where $\pm$ indicates boundary values from the left (right) of the interval $(-1,1)$. The matrix $H$ has the factorized form:
\begin{equation}
    \label{eq:Hnu}
 H(x) = (1+x)^{2\ell} K \diag\left(1, \xi^{\frac12}_+(x), \ldots, \xi^{\ell}_+(x)\right) R
R^T  \diag\left(1, \xi^{\frac12}_-(x), \ldots, \xi^{\ell}_-(x)\right) K^T,
\end{equation}
where
\begin{equation}
    \label{eq:RGegenbauer}
 R=\diag\left(\binom{2\ell}{0},(-1)\binom{2\ell}{1},\ldots, (-1)^{2\ell}\binom{2\ell}{2\ell}\right)K T^{\frac12}.
\end{equation}

\begin{proposition}
\label{prop:example2}
Let $R$ be an invertible matrix. Then the matrix Szeg\H{o} function $D_{H}$ for the matrix weight \eqref{eq:Hnu}  is
	\begin{equation} 
	\label{eq:DHexample2} 
	D_{H}(z) =\frac{(1+z)^\ell}{\varphi(z)^\ell} K \diag\left(1, \xi(z)^{\frac12}, \ldots, \xi(z)^{\ell}\right) R,
	\end{equation}
	for $z \in \mathbb C \setminus [-1,1]$
	with principal branches of the fractional powers.
	
    The matrix Szeg\H{o} function for the weight $W$	is 
    $$D(z) =\frac{(z^2-1)^{\frac{\nu}{2}-\frac{1}{4}}}{\varphi(z)^{\nu-\frac{1}{2}}} D_{H}(z).$$
\end{proposition}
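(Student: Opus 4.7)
The plan is to verify that $D_H(z)$ defined by \eqref{eq:DHexample2} satisfies the four defining properties of the matrix Szegő function listed in Proposition \ref{prop12} — analyticity and invertibility on $\mathbb{C}\setminus[-1,1]$, existence of an invertible limit at $\infty$, and the boundary factorization — following the pattern of the proof of Proposition \ref{prop:example1}. Since $H$ has real polynomial entries and hence is real symmetric (as noted in the paragraph containing \eqref{eq:Hnu}), we may by Remark \ref{remark13} work with the equivalent formulation $H(x)=D_{H+}(x)D_{H-}(x)^T=D_{H-}(x)D_{H+}(x)^T$ of the Szegő factorization, which avoids complex conjugation.

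For analyticity and invertibility, $K$ and $R$ are constant invertible matrices, each diagonal entry $\xi^{j/2}(z)$ is analytic and non-vanishing on $\mathbb{C}\setminus[-1,1]$ because $(z-1)/(z+1)$ lies in $(-\infty,0]$ exactly on $[-1,1]$, and the scalar prefactor $(1+z)^\ell/\varphi(z)^\ell$ is likewise analytic and non-vanishing there (for half-integer $\ell$, the principal-branch jumps of $(1+z)^\ell$ and $\varphi(z)^\ell$ across $(-\infty,-1)$ are both multiplication by $e^{\pm2\pi i\ell}$ and cancel in the ratio, in the same manner as in the proof of Proposition \ref{prop:example1}). At infinity, $\xi(z)\to 1$ and $(1+z)/\varphi(z)\to 1/2$, so $D_H(\infty)=2^{-\ell}KR$ exists and is invertible.

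The main step is the boundary factorization. For $x\in(-1,1)$ we have $\varphi_\pm(x)=e^{\pm i\theta(x)}$ with $\theta(x)=\arccos x$, so $\varphi_+^\ell\varphi_-^\ell=1$. Writing $\Xi_\pm(x)=\diag(1,\xi_\pm^{1/2}(x),\dots,\xi_\pm^\ell(x))$, the identity $\Xi_\pm^T=\Xi_\pm$ together with a direct computation gives
\[
D_{H+}(x)D_{H-}(x)^T=\frac{(1+x)^\ell}{\varphi_+^\ell}K\Xi_+ R\cdot\frac{(1+x)^\ell}{\varphi_-^\ell}R^T\Xi_- K^T=(1+x)^{2\ell}K\Xi_+ RR^T\Xi_- K^T,
\]
which equals $H(x)$ by \eqref{eq:Hnu}. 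The companion identity $D_{H-}D_{H+}^T=H$ is obtained by taking transposes of the displayed identity and invoking the real symmetry of $H$.

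Finally, the formula for $D$ follows by noting that $d(z)=(z^2-1)^{\nu/2-1/4}/\varphi(z)^{\nu-1/2}$ is the scalar Szegő function of the Gegenbauer weight $(1-x^2)^{\nu-1/2}$. Rewriting $d(z)=\bigl[(z^2-1)^{1/2}/\varphi(z)\bigr]^{\nu-1/2}=\bigl[\tfrac12-\tfrac1{2\varphi(z)^2}\bigr]^{\nu-1/2}$ shows that $d$ is analytic and non-vanishing on $\mathbb{C}\setminus[-1,1]$ with $d(\infty)=2^{1/2-\nu}$, and $d_+(x)d_-(x)=(1-x^2)^{\nu-1/2}$ follows from $\varphi_+\varphi_-=1$ and $|(z^2-1)^{1/2}_\pm(x)|=\sqrt{1-x^2}$. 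Hence $D=d\cdot D_H$ inherits all the required properties, and $D_\pm D_\mp^T=d_\pm d_\mp\,D_{H\pm}D_{H\mp}^T=W(x)$ on $(-1,1)$. The only technical subtlety is the branch bookkeeping for half-integer $\ell$, together with the use of $\varphi_+\varphi_-=1$ and the real symmetry of $H$ to make the boundary identity collapse directly to \eqref{eq:Hnu}.
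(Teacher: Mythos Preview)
Your proof is correct and follows essentially the same approach as the paper: verify analyticity (including the branch cancellation of $(1+z)^\ell/\varphi(z)^\ell$ across $(-\infty,-1)$ for half-integer $\ell$), invertibility, existence of $D_H(\infty)$, and then match the boundary factorization directly against \eqref{eq:Hnu}, finally appending the scalar Gegenbauer Szeg\H{o} function. You add slightly more detail than the paper does---the explicit rewriting $d(z)=[\tfrac12-\tfrac1{2\varphi(z)^2}]^{\nu-1/2}$ to exhibit analyticity of the scalar factor, and the transpose argument for the companion identity---but the structure and key steps are the same.
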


\begin{proof}
The entries $\xi^{\frac{j}{2}}(z)$ of the diagonal matrix in $D_{H}(z)$ are analytic in $\mathbb{C}\setminus[-1,1]$, so  $D_{H}(z)$ is an analytic function on $\mathbb{C}\setminus[-1,1]$. On the other hand, the factor $(1+z)^\ell\varphi(z)^{-\ell}$ is analytic in $\mathbb{C}\setminus[-1,1]$ for $\ell\in \mathbb{N}_0$ and has an analytic continuation across $(-\infty, -1)$ for fractional values of $\ell\in \frac12\mathbb{N}_0$. Therefore $D_{H}$ is analytic in $\mathbb{C}\setminus[-1,1]$. 

Moreover, since the factors on the right hand side of  \eqref{eq:DHexample2} are invertible,  $D_{H}(z)$ is invertible for  $z\in \mathbb{C}\setminus[-1,1]$. As $z\to \infty$ we have that  $\xi(z)^{\frac{j}{2}} \to 1$ and $(z+1)^\ell\varphi(z)^{-\ell} \to 2^{-\ell}$. Therefore the limit $D_{H}(\infty)$ exists and is invertible. 

For $x\in(-1,1)$, by taking $\pm$ boundary values we get $H(x) = D_{H,+}(x)D_{H,-}(x)^T,$
which coincides with  \eqref{eq:Hnu}.

The matrix Szeg\H{o} function for the weight $W$ follows from \eqref{eq:DHexample2} and \eqref{eq:form_weight_Chebyshev} by using that the scalar prefactor in \eqref{eq:DHexample2} is the Szeg\H{o} function for the Gegenbauer weight $(1-x^2)^{\nu-\frac12}$.
\end{proof}
\begin{remark}
As in the previous example, any choice of invertible matrix $R$ gives the matrix Szeg\H{o} function for $H(z)$ explicitly in \eqref{eq:Hnu}. However, in order for $H(z)$ to be real valued on $(-1,1)$, we need the specific matrix $R$ in \eqref{eq:RGegenbauer}.
\end{remark}

% \begin{figure}[t]
% 	\centering
% 	\includegraphics[width=\linewidth]{Gegenbauer-global}
% 	\caption{Example 2: Plot of the entries of the scaled polynomial $2^{n}\varphi(x)^{-n}P_n(x)$ for $n=20$ and $\nu=\frac12$ (solid line). The approximation $F(x)$ (dashed line) is given in formula \eqref{eq:F_Gegenbauer_global} of Corollary \ref{cor:global-Gegenbauer}.}
% 	\label{fig:gegenbauer1}
% \end{figure}
		
\begin{figure}[t]
	\centering
	\includegraphics[width=\linewidth]{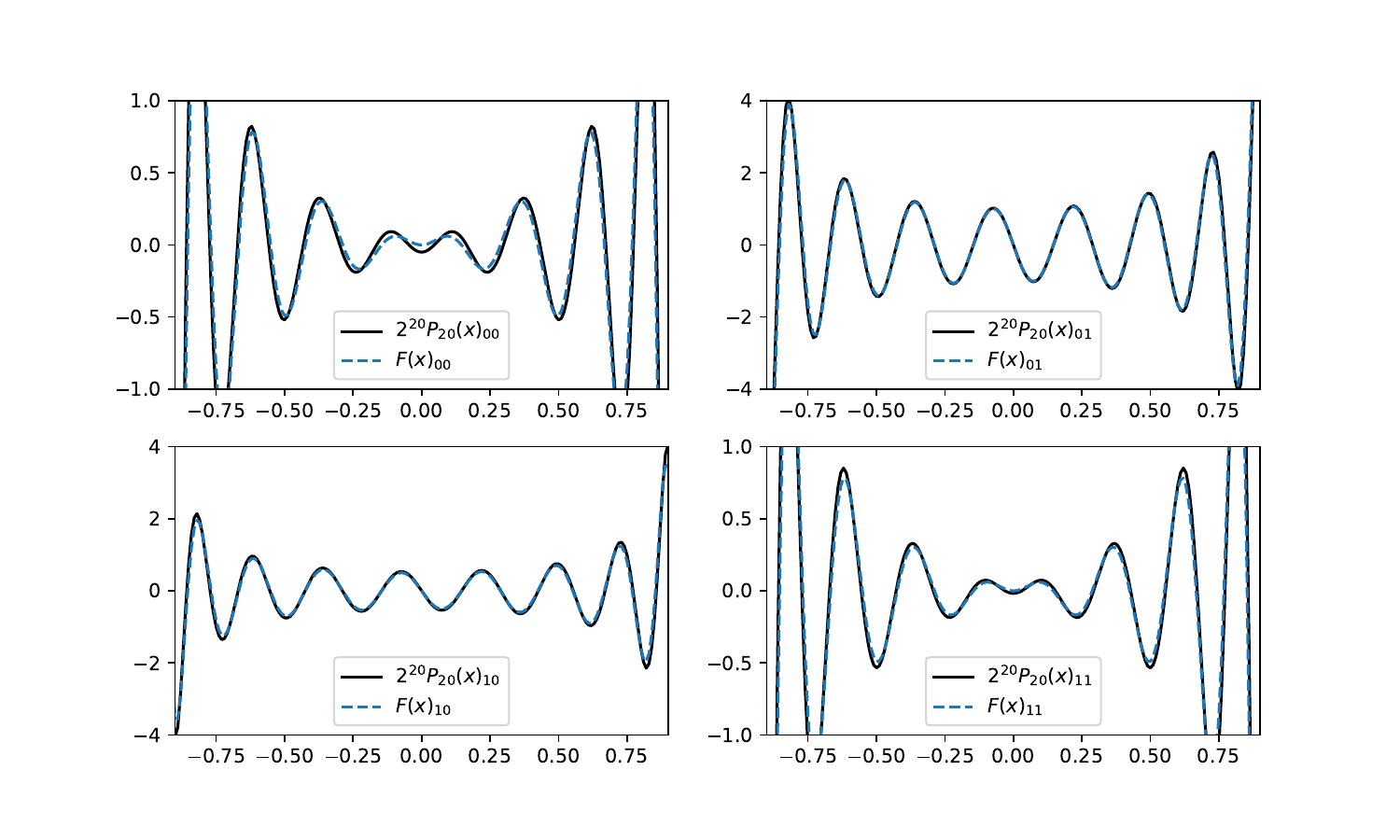}
	\caption{Plot of the entries of the scaled Gegenbauer MVOP $2^{n}P_n(x)$ for $n=20$ and $\nu=\frac12$ (solid line). The approximation $F^{\rm inner}(x)$ (dashed line) is given in formula \eqref{eq:F_Gegenbauer_interval} of Corollary \ref{cor:global-Gegenbauer}.}
	\label{fig:gegenbauer2}
\end{figure}

The weight $W$  \eqref{eq:form_weight_Chebyshev} is an instance of a reducible weight matrix. More precisely, for any $n\in\mathbb{N}$ let $I_{n}$ be the $n \times n$ identity matrix and let $J_{n}$ be the $n \times n$ matrix
	$$
		J_{n}=\sum_{i=0}^{n-1} E_{i, n-1-i},
		$$
		where $E_{i,j}$ indicates the matrix with all $0$ entries, except for a $1$ in the position $(i,j)$, and let $Y$ be given by
		\begin{align} \label{defY}
			Y&=\frac{1}{\sqrt{2}} \begin{pmatrix}
				I_{\ell+\frac{1}{2}} & J_{\ell+\frac{1}{2}} \\
				-J_{\ell+\frac{1}{2}} & I_{\ell+\frac{1}{2}}
			\end{pmatrix}, \quad \ell \in \mathbb{N}_0+\frac12, \qquad Y =\frac{1}{\sqrt{2}}\begin{pmatrix}
				I_{\ell} & 0 & J_{\ell} \\
				0 & \sqrt{2} & 0 \\
				-J_{\ell} & 0 & I_{\ell}
			\end{pmatrix}, \quad  \ell \in  \mathbb{N}_0.
		\end{align}
		We note that $Y$ is orthogonal, i.e $YY^T = I_{2\ell+1}$. The weight matrix $W(x)$ satisfies:
		\begin{equation}
		\label{eq:block-dec-Gegenb}
		YW(x)Y^T = \begin{pmatrix} W_2(x) & 0 \\ 0 & W_1(x) \end{pmatrix},
		\end{equation}
		where $W_1$ and $W_2$ have a strictly lower dimension, see for instance \cite[Theorem 6.5]{KvPR12} and \cite[Proposition 2.6]{KdlRR17}. 
		Given \eqref{eq:block-dec-Gegenb}, it is straightforward to check from the orthogona\-lity property that, if $\widetilde{P}_n(x)$ are monic MVOPs with respect to the original weight $W(x)$, then $Y\widetilde{P}_n(x)Y^T$ are monic MVOPs orthogonal with respect to the weight in block form $\begin{pmatrix} W_2(x) &0\\ 0 & W_1(x)\end{pmatrix}$. Moreover, using the uniqueness  property of the family of monic MVOPs, see also \cite[Corollary 5.6]{KvPR12}, we have
    \[
    Y\widetilde{P}_n(x)Y^T=\begin{pmatrix} P_n(x) &0\\ 0 & Q_n(x)\end{pmatrix},
    \]
    where $P_n(x)$ and $Q_n(x)$ are monic MVOPs with respect to the blocks $W_2(x)$ and $W_1(x)$ respectively.
		
		For $\ell=\tfrac12$, the weight $W$ decomposes into two $1\times 1$ blocks and therefore reduces to a scalar situation. The first nontrivial example is for $\ell=1$, where the weight $W$ decomposes into an irreducible $2\times 2$ block $W_2$ and a $1\times 1$ block $W_1$. This irreducible $2\times 2$ block $W_2$ is, 
		up to the scalar factor $\frac{2\nu+1}{2+\nu}$, the following:
		
		%The $2\times 2$ irreducible block of $W_2$ in the $\ell=1$ case is, 
		%up to the scalar factor $\frac{2\nu+1}{2+\nu}$, the following:
		\begin{equation} \label{example2}
			W_2(x) = (1-x^2)^{\nu-1/2}  \begin{pmatrix}
				2(\nu+1) x^2 + 2 \nu & (2\nu+1) \sqrt{2}x \\
				(2\nu+1) \sqrt{2} x & \nu x^2 + \nu +1
			\end{pmatrix}, \qquad -1 < x < 1,
		\end{equation}
		with $\nu > 0$. 

\begin{corollary}
\label{cor:global-Gegenbauer}
The monic MVOP $P_n$ associated with the weight matrix %\textcolor{red}{\eqref{eq:form_weight_Chebyshev}}
\eqref{example2} has the following asymptotic bahevior as $n\to\infty$:
\begin{enumerate}
    \item[\rm (a)] For $z\in \mathbb{C}\setminus [-1,1]$,
    \begin{equation*}
\frac{2^n P_n(z)}{\varphi(z)^n} = F^{\rm outer}(z) \left(I_2+\mathcal{O}(n^{-1})\right),\qquad n\to \infty,
\end{equation*}
where
\begin{equation}
\label{eq:F_Gegenbauer_global}
F^{\rm outer}(z) = 
\frac{\varphi(z)^{\nu+1}}{2^{\nu+2}(z^2-1)^{\frac{\nu}{2}+1}}  
\begin{pmatrix} 2z & -2\sqrt{2} \\ -\sqrt{2} & 2z \end{pmatrix}.
%F(z) = 
%\frac{2^{-\nu} \varphi(z)^{\nu+3}}{(z^2-1)^{\frac{\nu}{2}} (\varphi(z)-1)^2(\varphi(z)+1)^2} 
%\begin{pmatrix} 2z & -2\sqrt{2} \\ -\sqrt{2} & 2z \end{pmatrix},
\end{equation}
\item[\rm (b)] For $x\in(-1,1)$,
\begin{equation*}
2^nP_n(x) = F^{\rm inner}(x) +\mathcal{O}(n^{-1}),\qquad 
n \to \infty,
\end{equation*}
where
\begin{equation}
    \label{eq:F_Gegenbauer_interval}
F^{\rm inner}(x) = \frac{2^{-\nu-1}}{(1-x^2)^{\frac{\nu}{2}+1}}%{\sqrt{2\nu(\nu+1)}} 
		%2^{-\nu-\frac{1}{2}}
		\cos\left((n+\nu+1)\theta(x)-\frac{\nu\pi}{2}\right)
		\begin{pmatrix}
			2x & -2\sqrt{2}\\
			-\sqrt{2} & 2x
		\end{pmatrix},
\end{equation}	
and $\theta(x)=\arccos x$.

\item[\rm (c)] Mehler--Heine asymptotics near $z=1$:
\begin{multline*}
\lim_{n\to\infty} \frac{2^{n}}{\sqrt{n\pi}}  P_n\left(\cos \frac{\theta}{n} \right) Q\left(\cos \frac{\theta}{n}\right) 
	\diag \left(c_1^{1/2} n^{-\nu+\frac{1}{2}},c_2^{1/2} n^{-\nu-\frac{3}{2}} \right)\\
	=
	\frac{2^{-\nu-\frac{1}{2}}}{\sqrt{1+2\nu}}
\begin{pmatrix}
\sqrt{2}(\nu+1) & -2\sqrt{\nu(\nu+1)}\\
\nu &\sqrt{\nu(\nu+1)}
\end{pmatrix}
\begin{pmatrix}
\theta^{-\nu+\frac{1}{2}} J_{\nu-\frac{1}{2}}(\theta) & 0\\
0 & \theta^{-\nu-\frac{3}{2}}J_{\nu+\frac{3}{2}}(\theta)
\end{pmatrix},
	%=D(\infty)A\,\diag \left(\theta^{-\alpha_1}J_{\alpha_1}(\theta),\theta^{-\alpha_2}J_{\alpha_2}(\theta)\right),
\end{multline*}	
where the constants are
$c_1=3(1+2\nu)$ and $ c_2=\frac{2\nu(1+\nu)}{3(1+2\nu)}$.
\end{enumerate}
\end{corollary}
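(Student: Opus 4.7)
The strategy is parallel to the proof of Corollary \ref{cor:asymp-Jacobi2}: first produce an explicit matrix Szeg\H{o} function $D(z)$ for the irreducible $2\times 2$ block $W_2$ in \eqref{example2}, and then substitute into Theorems \ref{theorem14}, \ref{theorem15} and \ref{theorem17} in turn. Since $W_2$ arises as a block of the reducible weight \eqref{eq:form_weight_Chebyshev} with $\ell=1$ via the orthogonal conjugation $Y$ from \eqref{defY}, I would first apply Proposition \ref{prop:example2} to get the Szeg\H{o} function of the full $3\times 3$ weight, conjugate by $Y$ to split off the $2\times 2$ block, and read off $D(z)$. Writing $\varphi=\varphi(z)$, this should yield a closed form with leading behavior
\begin{equation*}
D(z)=\frac{(z^2-1)^{\nu/2-1/4}}{\varphi^{\nu-1/2}}\cdot(\text{explicit polynomial in }\varphi,\varphi^{-1}),
\end{equation*}
from which $D(\infty)$ and the prefactor $\frac{\varphi^{1/2}}{\sqrt{2}(z^2-1)^{1/4}}D(\infty)D(z)^{-1}$ simplify to the matrix in \eqref{eq:F_Gegenbauer_global}. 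This gives part (a) directly from \eqref{Pnoutside}.

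For part (b), I would exploit that $W_2(x)$ is real symmetric on $(-1,1)$, so the simplified formula \eqref{eq:innerasymp} applies. With $\varphi_{\pm}(x)=e^{\pm i\theta(x)}$, $\theta(x)=\arccos x$, I would compute $D_+(x)^{-1}$ from the explicit $D(z)$, multiply by $D(\infty)$, and extract the real part of $e^{i(n+1/2)\theta(x)-\pi i/4}D_+(x)^{-1}$. The Gegenbauer-type symmetry $\alpha=\beta=\nu-1/2$ collapses the phase to a single cosine, producing the single factor $\cos\bigl((n+\nu+1)\theta(x)-\tfrac{\nu\pi}{2}\bigr)$ in \eqref{eq:F_Gegenbauer_interval}; the matrix factor should coincide with the one in \eqref{eq:F_Gegenbauer_global} evaluated on $(-1,1)$, which is a useful internal consistency check.

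For part (c), I would apply Theorem \ref{theorem17}. This requires four ingredients: the eigenvalues $\lambda_{1,2}(x)$ of the matrix part $H(x)=\frac{1}{2\nu+1}\begin{pmatrix}2(\nu+1)x^2+2\nu & (2\nu+1)\sqrt{2}x\\ (2\nu+1)\sqrt{2}x & \nu x^2+\nu+1\end{pmatrix}$, their vanishing orders $n_1,n_2$ at $x=1$, the constants $c_1,c_2$ in \eqref{eq:cj}, and the matrix $U_1=\lim_{z\to 1}D(z)^{-1}V(z)$. The characteristic polynomial of $H(x)$ factors explicitly, and a short computation should yield $n_1=0$ (so $\alpha_1=\nu-1/2$) and $n_2=2$ (so $\alpha_2=\nu+3/2$), with $c_1=3(1+2\nu)$ and $c_2=\frac{2\nu(1+\nu)}{3(1+2\nu)}$ as stated. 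The normalized eigenvector matrix $Q(z)$ then gives $V(z)=(z-1)^{\alpha/2}(z+1)^{\beta/2}Q(z)\widetilde{\Lambda}(z)^{1/2}$, and the limit $U_1$ is computed as in \eqref{U1Jacobi} of the Jacobi example.

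The only real obstacle is bookkeeping: keeping track of branches of $\varphi(z)^{1/2}$, $(z\pm 1)^{1/2}$ and $\xi(z)^{1/2}$ while simplifying $D(z)^{-1}V(z)$ near $z=1$, and ensuring that the prefactors collapse cleanly to match \eqref{eq:F_Gegenbauer_global}--\eqref{eq:F_Gegenbauer_interval}. All three parts then reduce to substitution into the corresponding general theorem, and the computation is entirely analogous to the Jacobi case already carried out in Corollary \ref{cor:asymp-Jacobi2}.
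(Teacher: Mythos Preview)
Your proposal is correct and follows essentially the same route as the paper: obtain the $2\times2$ Szeg\H{o} function by conjugating the $\ell=1$ case of Proposition~\ref{prop:example2} by $Y$, then feed the explicit $D(z)$, $D(\infty)$, $D_+(x)^{-1}$, eigenvalue data, and $U_1$ into Theorems~\ref{theorem14}, \ref{theorem15}, and \ref{theorem17} in turn. One small slip: the matrix part of $W_2$ in \eqref{example2} does not carry the prefactor $\tfrac{1}{2\nu+1}$ you wrote, so be careful with that when computing $c_1,c_2$ and $U_1$.
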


\begin{proof}
The matrix Szeg\H{o} function for $W_2(x)$ in \eqref{example2} is
		\begin{equation}
		    \label{eq:D2-Gegenbauer}
		D(z) = \frac{(z^2-1)^{\nu/2-1/4}}{\varphi(z)^{\nu-1/2}}  \frac{1}{\varphi(z)} \begin{pmatrix}
        \sqrt{2(\nu+1)}z &
			\sqrt{2\nu}  \\
			\sqrt{\nu+1}  & 
		    \sqrt{\nu}z
		\end{pmatrix},
		\end{equation}
and as a consequence
		\begin{equation}\label{eq:Dinf_Gegenbauer}
			D_2(\infty) 
			= 
			2^{-\nu-\frac{1}{2}}
			\begin{pmatrix}
				\sqrt{2(\nu+1)} & 0\\
				0 & \sqrt{\nu}
			\end{pmatrix}.
		\end{equation}
This follows from conjugating the matrix
Szeg\H{o} function from Proposition \ref{prop:example2} for $\ell=1$ with $Y$
coming from \eqref{defY}. Thus, we obtain up to the factor $(2\nu+1)^{\frac12}(2+\nu)^{-\frac12}$:
$$YD(z)Y^T = \begin{pmatrix} D_2(z) & 0 \\ 0 & D_1(z) \end{pmatrix},$$
where $D_2(z)$ is given by \eqref{eq:D2-Gegenbauer}, using the fact that $\varphi(z)+\varphi(z)^{-1}=2z$. From this and the block decomposition of the weight $W$ in \eqref{eq:block-dec-Gegenb}, we verify that $D_2(z)$ is the matrix Szeg\H{o} function for $W_2(z)$. Then, the matrix $D_2(\infty)$ is obtained directly by taking the limit of \eqref{eq:D2-Gegenbauer} as $z\to \infty$.

With this information, direct calculation 
%using \eqref{eq:D2-Gegenbauer} and \eqref{eq:Dinf_Gegenbauer}:
% $D_2(\infty)D_2^{-1}(z)$ is given by:
gives 
\begin{equation}
D_2(\infty)D_2^{-1}(z) 
=
\frac{\varphi(z)^{\nu+\frac{1}{2}}}
{2^{\nu+\frac32}(z^2-1)^{\frac{\nu}{2}+\frac34}} 
\begin{pmatrix} 
2z & -2\sqrt{2}\\
-\sqrt{2} & 2z
\end{pmatrix},
\end{equation}
and application of Theorem \ref{theorem14} gives the result
from part (a).

\medskip
  
For $x\in(-1,1)$, we have from \eqref{eq:D2-Gegenbauer} the boundary value
		\[ 
		e^{i\left(n+\frac{1}{2}\right)\theta-\frac{\pi i}{4}}
		D_{2+}(x)^{-1} 
		= 
		\frac{(1-x^2)^{-\frac{\nu}{2}-\frac{3}{4}} }{\sqrt{2\nu(\nu+1)}}
		e^{i\left(n+\nu+1)\theta(x)-\frac{\nu\pi}{2}\right)} 
		\begin{pmatrix} 
			\sqrt{\nu} x & -\sqrt{2\nu} \\
			-\sqrt{\nu+1} & \sqrt{2(\nu+1)} x 
		\end{pmatrix},
		\]
%
%		and then
%\begin{multline*}
%\Re\left[e^{i\left(n+\frac{1}{2}\right)\theta-\frac{\pi i}{4}}D_{2+}(x)^{-1}\right]\\
%=
%\frac{(1-x^2)^{-\frac{\nu}{2}-\frac{3}{4}}}{\sqrt{2\nu(\nu+1)}}
%\cos\left((n+\nu+1)\theta-\frac{\nu\pi}{2}\right)
%\begin{pmatrix} 
%	\sqrt{\nu} x & -\sqrt{2\nu} \\
%	-\sqrt{\nu+1} & \sqrt{2(\nu+1)} x 
%\end{pmatrix}.
%\end{multline*}
Then, from this, the fact that $W(x)$ is real symmetric on $[-1,1]$ and Theorem \ref{theorem15} we have the inner asymptotics for $x \in (-1,1)$,
\begin{equation}\label{eq:innerasymp_Gegenbauer}
\begin{aligned}
	2^n P_n(x)
	&=
	\frac{\sqrt{2}}{(1-x^2)^{\frac{1}{4}}} D_2(\infty)
	\Re \left[e^{i\left(n+\frac{1}{2}\right)\theta(x)-\frac{\pi i}{4}}D_{2+}(x)^{-1}\right] + \mathcal{O}(n^{-1})\\
	&=
		\frac{2^{-\nu}}{\sqrt{2\nu(\nu+1)}(1-x^2)^{\frac{\nu}{2}+1}} 
		\cos\left((n+\nu+1)\theta-\frac{\nu\pi}{2}\right)\\
		&\times \begin{pmatrix}
			\sqrt{2(\nu+1)} & 0\\
			0 & \sqrt{\nu}
		\end{pmatrix}
		\begin{pmatrix} 
			\sqrt{\nu} x & -\sqrt{2\nu} \\
			-\sqrt{\nu+1} & \sqrt{2(\nu+1)} x 
		\end{pmatrix}+\mathcal{O}(n^{-1})\\
		&=
		\frac{2^{-\nu-1}}{(1-x^2)^{\frac{\nu}{2}+1}}%{\sqrt{2\nu(\nu+1)}} 
		%2^{-\nu-\frac{1}{2}}
		\cos\left((n+\nu+1)\theta-\frac{\nu\pi}{2}\right)
		\begin{pmatrix}
			2x & -2\sqrt{2}\\
			-\sqrt{2} & 2x
		\end{pmatrix}
		+\mathcal{O}(n^{-1}),
	\end{aligned}
\end{equation}
where we have used \eqref{eq:Dinf_Gegenbauer}. This 
proves part (b).

\medskip

The eigenvalues $\lambda_{1,2}$ of the matrix part of \eqref{example2} are explicit, and as $x\to 1$ they satisfy
% in \eqref{example2}:
% \begin{equation}
% 	\lambda_{1,2}(x)
% 	=
% 	\frac{(2+3\nu)x^2+3\nu+1\pm \sqrt{m(x)}}{2},
% \end{equation}
% where
% \begin{equation}
% 	m(x)
% 	=
% 	(\nu+2)^2 x^4+(34\nu^2+34\nu+4)x^2+(\nu-1)^2.
% \end{equation}
% As $x\to 1$, we have
\begin{equation}
	\begin{aligned}
		\lambda_1(x)
		&=
		3(1+2\nu)+2(2+3\nu)(x-1)+\mathcal{O}((x-1)^2),\\
		\lambda_2(x)
		&=
		\frac{8\nu(1+\nu)}{3(1+2\nu)}(x-1)^2+\mathcal{O}((x-1)^3),
		%\left(2+3\nu-\frac{6+5\nu+2\nu^2}{3(1+2\nu)}\right)(x-1)^2+\mathcal{O}((x-1)^3).
	\end{aligned}
\end{equation}
so $n_1=0$ and $n_2=2$, and the exponents are
\[
\alpha_1=\nu-\tfrac{1}{2}, \qquad \alpha_2=\nu-\tfrac{1}{2}+2=\nu+\tfrac{3}{2}.
\]
%As $x\to -1$, we have
%\begin{equation}
%	\begin{aligned}
%		\lambda_1(x)
%		&=
%		3(1+2\nu)-2(2+3\nu)(x+1)+\mathcal{O}((x+1)^2),\\
%		\lambda_2(x)
%		&=
%		\frac{8\nu(1+\nu)}{3(1+2\nu)}(x+1)^2+\mathcal{O}((x+1)^3),
%		%\left(2+3\nu-\frac{6+5\nu+2\nu^2}{3(1+2\nu)}\right)(x-1)^2+\mathcal{O}((x-1)^3).
%	\end{aligned}
%\end{equation}
%so $m_1=0$ and $m_2=2$. We define
%\begin{equation}
%\rho_{1,2}(x)
%=
%\frac{\lambda_{1,2}(x)-(\nu+1)^2}{\sqrt{2}(1+2\nu)x},
%\end{equation}
%and then the matrix
The constants in this example are 
\[
\begin{aligned}
    c_1&=2^{-\alpha_1+\beta}\lim_{x\to 1}\frac{\lambda_1(x)}{(1-x)^{n_1}}
    =
    \lim_{x\to 1}\lambda_1(x)=3(1+2\nu),\\
    c_2&=2^{-\alpha_2+\beta}\lim_{x\to 1}\frac{\lambda_2(x)}{(1-x)^{n_2}}
    =
    2^{-2}\lim_{x\to 1}\frac{\lambda_2(x)}{(1-x)^2}=\frac{2\nu(1+\nu)}{3(1+2\nu)},
\end{aligned}
\]
We can also calculate the matrix $U_1$ using the explicit expressions for $D_2(z)$ in \eqref{eq:D2-Gegenbauer}, as well as $V(z)$:
\[
\begin{aligned}
    V(z)&=(z^2-1)^{\frac{\nu}{2}-\frac{1}{4}}Q(z)\tilde{\Lambda}(z)^{1/2},
        %&=(z^2-1)^{\frac{\nu}{2}-\frac{1}{4}}
 \end{aligned}
\]
where $\tilde{\Lambda}(z)^{1/2}=\diag (\lambda_1(z)^{1/2},\lambda_2(z)^{1/2})$ and we use the normalized matrix eigenvectors
\begin{equation}
	Q(z)
	=
	\begin{pmatrix}
		\frac{1}{\sqrt{1+\rho_2(z)^2}} & -\frac{1}{\sqrt{1+\rho_1(z)^2}}\\
		-\frac{\rho_2(z)}{\sqrt{1+\rho_2(z)^2}} & \frac{\rho_1(z)}{\sqrt{1+\rho_1(z)^2}}
	\end{pmatrix},
	\qquad 
	\rho_{1,2}(z)
	=
	\frac{\lambda_{1,2}(z)-(\nu z^2+\nu+1)}{\sqrt{2}(1+2\nu)z}.
\end{equation}

From this and \eqref{eq:D2-Gegenbauer}, we calculate
%Using $D_2(z)$ from \eqref{eq:D2-Gegenbauer} and the previous matrix $V(z)$, we obtain
\begin{equation} \label{eq:A-Gegenbauer}
U_1
=\lim_{z\to 1} D_2^{-1}(z)V(z)
=\frac{1}{\sqrt{1+2\nu}}
\begin{pmatrix}
\sqrt{\nu+1}&-\sqrt{\nu}\\
\sqrt{\nu} & \sqrt{\nu+1}
\end{pmatrix}.
\end{equation}

We combine this with $D_2(\infty)$ from \eqref{eq:Dinf_Gegenbauer}, to obtain
\begin{multline*}
D_2(\infty) U_1
=
\frac{2^{-\nu-\frac{1}{2}}}{\sqrt{1+2\nu}}
\begin{pmatrix}
\sqrt{2(\nu+1)} & 0\\
0 & \sqrt{\nu}
\end{pmatrix}
\begin{pmatrix}
\sqrt{\nu+1}&-\sqrt{\nu}\\
\sqrt{\nu} & \sqrt{\nu+1}
\end{pmatrix}\\
=
\frac{2^{-\nu-\frac{1}{2}}}{\sqrt{1+2\nu}}
\begin{pmatrix}
\sqrt{2}(\nu+1) & -2\sqrt{\nu(\nu+1)}\\
\nu &\sqrt{\nu(\nu+1)}
\end{pmatrix},
\end{multline*}
which we use in the right hand side of the Mehler--Heine asymptotic formula \eqref{eq:MHgeneral} to obtain part (c). 
\end{proof}

From part (b) of Corollary \ref{cor:global-Gegenbauer}, we have
$\det \left( 2^n P_n(x) \right) = \det F^{\rm inner}(x)  + \mathcal{O}(n^{-1})$ for $x \in (-1,1)$, with 
\begin{equation}
    \label{eq:determinant-gegenbauer}
\det F^{\rm inner}(x) = 
- 2^{-2\nu} (1-x^2)^{-\nu-\frac{1}{2}}
\cos^2\left((n+\nu+1)\arccos(x) -\frac{\nu\pi}{2}\right).
\end{equation} 
Thus $\det F^{\rm inner}(x)$ has double zeros on the
interval $(-1,1)$, which gives asymptotic information about the zeros of the Gegenbauer MVOPs as $n\to\infty$. For large $n$,
the zeros of $\det (2^n P_n(x))$ come in close pairs.
However, for finite $n$, the zeros are still simple.
See Figure \ref{fig:zeros_Gegenbauer} for a plot of $\det (2^n P_n)$
with $n=20$ and $\nu = \frac{1}{2}$. 

\begin{figure}[t]
	\centering
	\includegraphics[width=0.8\linewidth]{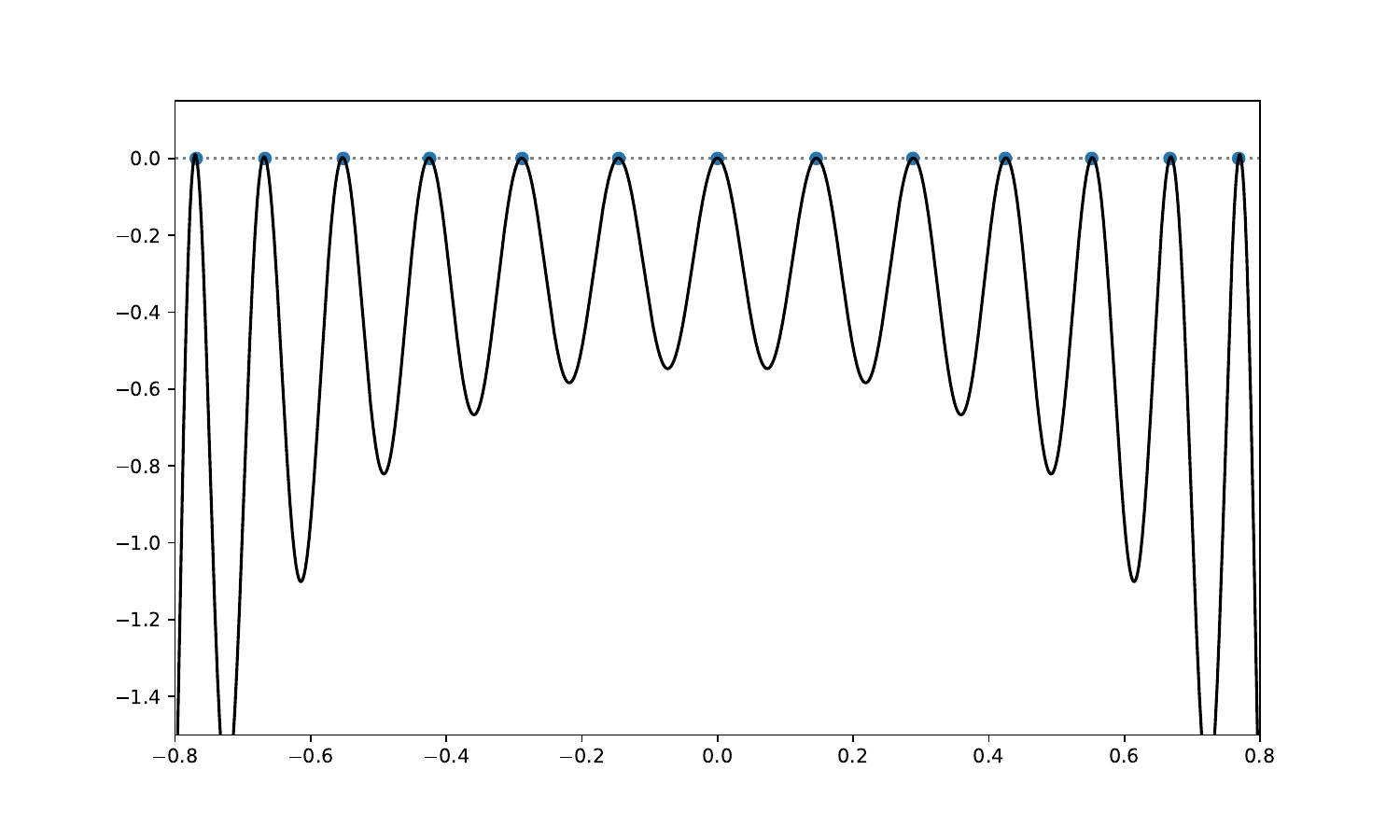}
	\caption{Plot of the determinant of scaled Gegenbauer MVOPs  $2^{20}P_{20}$ for $\nu=\frac12$ (solid line). The blue dots are zeros of \eqref{eq:determinant-gegenbauer}}
	\label{fig:zeros_Gegenbauer}
\end{figure}

%\begin{figure}[t]
%	\centering
%	\includegraphics[width=\linewidth]{Gegenbauer-MH}
%	\caption{Plot of the Mehler--Heine formula for the Gegenbauer example.}
%	\label{fig:Gegenbauer-MH}
%\end{figure}

Regarding the recurrence coefficients, we have the following result.
\begin{corollary}
    The recurrence coefficients for the Gegenbauer weight \eqref{example2} have the following asymptotic behavior as $n\to\infty$:
    \begin{equation}\label{eq:BnCnGegenbauer2}
    %\begin{aligned}
        B_n  = %\frac{\text{COMPUTE $b_2$}}{n^2}
        \frac{1}{n^2}
        \begin{pmatrix}
        0 & \sqrt{2}(1+\nu)\\
        \frac{\nu}{\sqrt{2}} & 0
        \end{pmatrix}
    +\mathcal{O}(n^{-3}), \qquad
        C_n  = \frac{1}{4}I_2+\mathcal{O}(n^{-2}).
    %\end{aligned}
    \end{equation}
\end{corollary}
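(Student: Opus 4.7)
The plan is to invoke Theorem \ref{theorem18}. The claim $C_n = \tfrac{1}{4} I_2 + \mathcal{O}(n^{-2})$ is immediate from the general asymptotic expansion \eqref{eq:BnCnasymp} applied with $r = 2$, so the content of the corollary is the identification of the matrix $\mathcal{B}_2$ for the Gegenbauer weight \eqref{example2} via the explicit formula \eqref{b2def}.

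To use \eqref{b2def} I would assemble four ingredients. First, $D(\infty) = D_2(\infty)$ is already available in \eqref{eq:Dinf_Gegenbauer}, and the unitary $U_1$ has been computed in \eqref{eq:A-Gegenbauer} during the proof of Corollary \ref{cor:global-Gegenbauer}. Second, one needs the exponents. From the Mehler--Heine analysis above $n_1 = 0$, $n_2 = 2$, giving $\alpha_1 = \nu - \tfrac{1}{2}$, $\alpha_2 = \nu + \tfrac{3}{2}$, and
\[ \diag(4\alpha_1^2 - 1,\, 4\alpha_2^2 - 1) = 4\diag\bigl(\nu(\nu-1),\,(\nu+1)(\nu+2)\bigr). \]
The parity symmetry $H(-x) = J H(x) J$ with $J = \diag(1,-1)$, which is immediate from the explicit form of \eqref{example2}, forces $m_j = n_j$ and therefore $\beta_j = \alpha_j$, so the same diagonal matrix appears in both endpoint terms of \eqref{b2def}.

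Third, I would compute the missing endpoint unitary $U_{-1} = \lim_{z \to -1} D_2(z)^{-1} \widehat{V}(z)$ by repeating the procedure that yielded \eqref{eq:A-Gegenbauer}. Concretely, one uses the explicit eigenvector matrix $Q(z)$ with $\rho_{1,2}$ as in the proof of Corollary \ref{cor:global-Gegenbauer}, the appropriate branches of $\widehat{\lambda}_j^{1/2} = ((-1)^{m_j}\lambda_j)^{1/2}$ near $z = -1$ from \eqref{Lambdahatdef}, and the fractional powers in \eqref{Vhatdef} with branch cut along $[-1,\infty)$. The parity symmetry supplies a useful sanity check: it translates into an identity between the endpoint data at $+1$ and $-1$, which predicts that $U_{-1}$ is related to $U_1$ by multiplication/conjugation with a signed permutation coming from $J$.

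Finally, I would substitute all four ingredients into \eqref{b2def} and carry out the $2 \times 2$ matrix algebra. The same parity symmetry, pushed through the three-term recurrence \eqref{eq:TTRR}, yields $J B_n J = -B_n$ and $J C_n J = C_n$, so $B_n$ (and hence $\mathcal{B}_2$) must be anti-diagonal while $C_n$ is diagonal. This forces a strong cancellation between the two endpoint contributions on the diagonal entries of \eqref{b2def}, and reading off the surviving anti-diagonal entries should produce the formula \eqref{eq:BnCnGegenbauer2}. The main obstacle is purely bookkeeping: getting the branches in $\widehat{V}$ and the signs of the modified eigenvalues $\widehat{\lambda}_j$ right, so that the anticipated diagonal cancellation in $\mathcal{B}_2$ comes out cleanly and the off-diagonal entries match the predicted values $\sqrt{2}(1+\nu)$ and $\nu/\sqrt{2}$.
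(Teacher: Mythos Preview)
Your proposal is correct and follows essentially the same route as the paper: compute $U_{-1}$ directly from $D_2(z)^{-1}\widehat{V}(z)$ in the limit $z\to -1$, note that $\beta_j=\alpha_j$, and substitute everything into \eqref{b2def}. The parity symmetry $H(-x)=JH(x)J$ with $J=\diag(1,-1)$, and its consequence $JB_nJ=-B_n$, is an additional structural observation that the paper does not make explicit; it is a useful sanity check but not needed for the argument, since the paper simply carries out the $2\times 2$ matrix algebra and reads off the anti-diagonal answer.
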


\begin{proof}
    We have the matrix $U_1$ from \eqref{eq:A-Gegenbauer}, and 
    we can compute in a similar way
    \[
    U_{-1}
    =\lim_{z\to -1} D_2^{-1}(z)\widehat{V}(z)
    =\frac{1}{\sqrt{1+2\nu}}
    \begin{pmatrix}
    -\sqrt{\nu+1}& -\sqrt{\nu}\\
    \sqrt{\nu} & -\sqrt{\nu+1}
    \end{pmatrix}.    
    \]
    Therefore, we obtain by \eqref{eq:Dinf_Gegenbauer}
    \[
    \begin{aligned}
    D_2(\infty) U_1
    &=
    \frac{2^{-\nu-\frac{1}{2}}}{\sqrt{1+2\nu}}
    \begin{pmatrix}
    \sqrt{2}(\nu+1) & \sqrt{2\nu(\nu+1)} \\
    \nu & \sqrt{\nu(\nu+1)}
    \end{pmatrix},\\
    %\widehat{L}
    %&=
    D_2(\infty)  U_{-1}
    &=
    \frac{2^{-\nu-\frac{1}{2}}}{\sqrt{1+2\nu}}
    \begin{pmatrix}
    -\sqrt{2}(\nu+1) & -\sqrt{2\nu(\nu+1)} \\
    \nu & -\sqrt{\nu(\nu+1)}
    \end{pmatrix}.
    \end{aligned}
    \]
Finally, the exponents of the eigenvalues at $z=1$
are $\alpha_1 = \nu-\frac{1}{2}$ and $\alpha_2 = \nu + \frac{3}{2}$, while at $z=-1$ they are 
$\beta_1=\nu-\tfrac{1}{2}$, and $\beta_2=\nu+\tfrac{3}{2}$.
  Collecting all this in \eqref{b2def}, 
    we obtain the coefficient $\mathcal{B}_2$
    in the expansion of $B_n$ and \eqref{eq:BnCnGegenbauer2}
    follows from Theorem \ref{theorem18}.
\end{proof}
%%%%%%%%%%%%%%%%%%%%%%%%%

\begin{remark}
The previous result is consistent with the explicit recurrence coefficients for matrix Gegenbauer polynomials given in \cite[Proposition 3.3]{KdlRR17}. If we denote by $\widetilde{P}_n(x)$ the MVOPs with respect to the $3\times 3$ Gegenbauer weight, and if  $\widetilde{B}_n$ and $\widetilde{C}_n$ are the recurrence coefficients for $\widetilde{P}_n(x)$, then
$Y\widetilde{B}_nY^T$ and $Y\widetilde{C}_nY^T$ give block diagonal matrices that contain the recurrence coefficients for the MVOPs $P_n(x)$ and $Q_n(x)$.
 
In the case given by the $2\times 2$ block in \eqref{example2}, we start with $3\times 3$ matrix Gegenbauer polynomials, we have $\ell=1$ and recurrence coefficients given explicitly by  
    \[
    \widetilde{B}_n
    =
    \begin{pmatrix}
        0 & \frac{\nu+1}{(n+\nu+1)(n+\nu+2)} & 0\\
        \frac{\nu}{(n+\nu)(n+\nu+1)} & 0 &
        \frac{\nu}{2(n+\nu)(n+\nu+1)} \\
        0 & \frac{\nu+1}{(n+\nu+1)(n+\nu+2)} & 0
    \end{pmatrix}
    \]
    and
%    \[
%    \widetilde{C}_n
%    =
%    \diag\left(
%        \frac{n(n+2\nu+1)}{4(n+\nu)(n+%\nu+1)},
%        \frac{n(n+\nu-1)(n+\nu+2)(n+2\%nu+1)}{4(n+\nu)^2(n+\nu+1)^2},
%        \frac{n(n+2\nu+1)}{4(n+\nu)(n+%\nu+1)}
%    \right).
%    \]
    \[
    \widetilde{C}_n
    =
    \frac{n(n+2\nu+1)}{4(n+\nu)(n+\nu+1)}
    \begin{pmatrix}
    1 & 0 & 0\\
    0 & \frac{(n+\nu-1)(n+\nu+2)}{(n+\nu)(n+\nu+1)} & 0\\
    0 & 0 & 1
    \end{pmatrix}
%        1,
%        1
%    \right).
    %\begin{pmatrix}
    %    \frac{n(n+2\nu+1)}{4(n+\nu)(n+\nu+1)} & 0 & 0\\
    %    0 & \frac{n(n+\nu-1)(n+\nu+2)(2+2\nu+1)}{4(n+\nu)^%2(n+\nu+1)^2} & 0\\
    %    0 & 0 & \frac{n(n+2\nu+1)}{4(n+\nu)(n+\nu+1)}
    %\end{pmatrix}.
    \]
%Then, the recurrence coefficients for the MVOPs corresponding to the $2\times 2$ block are obtained by
Then, we conjugate with the matrix $Y$ in \eqref{defY}, with $\ell=1$: 
\[
\begin{aligned}
Y\widetilde{B}_n Y^T
&=
\frac{1}{(n+\nu+1)(n+\nu+2)}
\begin{pmatrix}
    0 & \sqrt{2}(\nu+1) & 0\\
    \frac{\nu}{\sqrt{2}} & 0 & 0\\
    0 & 0 & 0
\end{pmatrix}\\
&=
\frac{1}{n^2}
\begin{pmatrix}
    0 &\sqrt{2}(\nu+1) & 0\\
    \frac{\nu}{\sqrt{2}} & 0 & 0\\
    0 & 0 & 0
\end{pmatrix}+\mathcal{O}(n^{-3}),\\
Y\widetilde{C}_n Y^T
&=
\frac{n(n+2\nu+1)}{4(n+\nu)(n+\nu+1)}
\begin{pmatrix}
    1 & 0 & 0\\
    0 & \frac{(n+\nu-1)(n+\nu+2)}{(n+\nu)(n+\nu+1)} & 0\\
    0 & 0 & 1 
\end{pmatrix}\\
&=
\frac{1}{4}I_3
+
\frac{1}{4n^2}
\begin{pmatrix}
    -\nu(\nu+1) & 0 & 0\\
    0 & -2-\nu(\nu+1) & 0\\
    0 & 0 & -\nu(\nu+1)
\end{pmatrix}
+\mathcal{O}(n^{-3}).
    \end{aligned}
\]
%If we expand these matrices as $n\to\infty$, 
We see that the $2\times 2$ upper blocks indeed agree with the terms up to $\mathcal{O}(n^{-2})$  that we obtain in the asymptotic expansions for $B_n$ and $C_n$ in \eqref{eq:BnCnGegenbauer2}.
%It is clear that the $2\times 2$ upper blocks of these matrices are compatible with $B_n$ and $C_n$ in \eqref{eq:BnCnGegenbauer2}.}
\end{remark}

\section{RH steepest descent analysis}
We use the Riemann-Hilbert (RH) problem for MVOP to prove the theorems
stated in Section \ref{sec1}. The steepest descent analysis
of RH problems originates with the work of Deift and Zhou \cite{DZ93}
and was applied to orthogonal polynomials in \cite{BI99, DKMVZ99}
and in many different contexts in subsequent works. 
We follow in particular \cite{KMVV04}, where the RH method is
applied to Jacobi-type OPs in $[-1,1]$, see also \cite{Kui03}.

\subsection{RH problem}
The matrix valued orthogonal polynomial $P_n$ is characterized by a RH problem of size $2r \times 2r$, see \cite{CM12,GIM11}.
It is the upper left block in the solution of the following 
$2r \times 2r $ matrix valued RH problem: we seek $Y:\mathbb{C} \setminus [-1,1] \to\mathbb{C}^{2r\times 2r}$ such that
\begin{enumerate}
    \item $Y:\mathbb C \setminus [-1,1] \to \mathbb C^{2r \times 2r}$ is analytic.
    \item For $-1<x<1$, with this segment oriented from left to right, the matrix $Y$ admits boundary values $Y_{\pm}(x)=\lim\limits_{\varepsilon\to 0+} Y(x\pm i\varepsilon)$, which are related by 
    \begin{equation} \label{Yjump}
	Y_+(x) = Y_-(x) \begin{pmatrix} I_r & W(x) \\ 0_r & I_r
	\end{pmatrix}, \qquad -1 < x < 1. \end{equation}
	\item As $z\to\infty$, we have the asymptotic behavior
	\begin{equation} \label{Yasymp}
	Y(z) = \left(I_{2r} + \mathcal{O}(z^{-1})\right)
	\begin{pmatrix} z^n I_r & 0_r \\ 0_r & z^{-n} I_r \end{pmatrix}
	\quad \text{ as } z \to \infty. \end{equation}
	\item To ensure a unique solution we also need to specify endpoint conditions at $\pm 1$. As in \cite{GIM11}, we have the following endpoint behavior (by $r\times r$ blocks):
\begin{equation}\label{eq:endpointsY}
    Y(z)
    =
    \begin{pmatrix}
        \mathcal{O}(1) & \mathcal{O}(h_{\alpha}(z))\\%\log|z\mp 1|)\\
        \mathcal{O}(1) & \mathcal{O}(h_{\alpha}(z))
    \end{pmatrix}, \quad \text{ as } z \to 1,
\end{equation}
since we assume that the matrix part of the weight is not identically $0$ at $z=\pm 1$. Here
\begin{equation} \label{hdef}
h_{\alpha}(z)
=
\begin{cases}
|z-1|^{\alpha},& \qquad -1<\alpha<0,\\
\log(|z-1|),& \qquad \alpha=0,\\
1,& \qquad \alpha>0,
\end{cases}
\end{equation} 
and a similar behavior holds as $z \to -1$, with 
$\beta$ instead of $\alpha$ in \eqref{eq:endpointsY}, and $z+1$ instead of $z-1$ in \eqref{hdef}.
\end{enumerate}

%for an analytic $Y : \mathbb C \setminus [-1,1] \to
%\mathbb C^{2r \times 2r}$ satisfying

We have
\begin{equation} \label{PninY}
	P_n(z) = \begin{pmatrix} I_r & 0_r \end{pmatrix} Y(z)
	\begin{pmatrix} I_r \\ 0_r \end{pmatrix}, \end{equation}
that is, $P_n$ is the left upper $r \times r$ block of the solution $Y$ to the
RH problem. The right upper $r \times r$ block is given by
the Cauchy transform
\[ C(P_nW)(z) := \frac{1}{2\pi i} \int_{-1}^1 \frac{P_n(s) W(s)}{s-z} ds,
\qquad z \in \mathbb C \setminus [-1,1], \]
which has the jump $C(P_nW)_+ = C(P_nW)_- + P_nW$ on $(-1,1)$.
The lower half of the matrix $Y$ is constructed in a similar
way out of the degree $n-1$ MVOP, 
\begin{equation} \label{Ysolution} 
	Y(z) = \begin{pmatrix} P_n(z) & C(P_nW)(z) \\[5pt]
		 -2\pi i \Gamma_{n-1}^{-1} P_{n-1}(z) & -2\pi i \Gamma_{n-1}^{-1} C(P_{n-1}W) (z) \end{pmatrix}, \end{equation}
where $\Gamma_{n-1} = \ds \int_{-1}^1 P_{n-1}(x) W(x) P_{n-1}(x)^* dx$
as in \eqref{MVOPdef},
see \cite{CM12, GIM11} for details.

\subsection{First transformation}
We use the conformal map $\varphi(x)$ given by \eqref{phidef} in the first
transformation $Y \mapsto T$, which is given by 
\begin{equation} \label{Tdef}
	T =
	\begin{pmatrix} 2^n I_r & 0_r\\ 	0_r & 2^{-n}I_r
	\end{pmatrix}
	Y
	\begin{pmatrix} \varphi^{-n} I_r & 0_r\\ 	0_r & \varphi^n I_r
	\end{pmatrix}.
\end{equation}

Using the properties $\varphi(z) = 2z + \mathcal{O}(z^{-1})$
as $z \to \infty$ and $\varphi_+(x) \varphi_-(x) = 1$ for $x\in(-1,1)$,
we obtain that $T$ solves the following RH problem:
\begin{enumerate}
    \item $T:\mathbb C \setminus [-1,1] \to \mathbb C^{2r \times 2r}$ is analytic.
    \item On $(-1,1)$ we have the jump
    \begin{equation} \label{Tjump} T_+ = T_-
	\begin{pmatrix} \varphi_-^{-2n} I_r & W \\
		0_r & \varphi_+^{-2n} I_r \end{pmatrix}.
		\end{equation}
	\item As $z\to\infty$ the matrix $T$ is normalized at infinity:
	\begin{equation} \label{Tasymp}
	T(z) =  I_{2r} + \mathcal{O}(z^{-1}) \quad \text{ as } z \to \infty.
\end{equation}
\item As $z\to\pm 1$, the matrix $T$ has the same endpoint behavior as $Y$ has.   
\end{enumerate}

%In addition The RH problem for $T$ is normalized at infinity.

%\textcolor{red}{Note that
%\[ \varphi(z) = 2 e^{g(z)} \]
%where
%\[ g(z) = \int_{-1}^1 \log(z-s) \frac{ds}{\pi\sqrt{1-s^2}} \]
%is the $g$ function associated with
%the equilibrium measure of the interval $[-1,1]$. We have
%\[ g'(z) = \frac{\varphi'(z)}{\varphi(z)}	=
%\frac{1}{(z^2-1)^{1/2}},
%\qquad z \in \mathbb C \setminus [-1,1]. \]}

\subsection{Second transformation}
The jump matrix on $(-1,1)$ can be factorized as
\[ \begin{pmatrix} I_r & 0_r \\
	\varphi_-^{-2n} W^{-1}	& I_r \end{pmatrix}
\begin{pmatrix} 0_r & W \\
	-W^{-1} & 0_r \end{pmatrix}
\begin{pmatrix} I_r & 0_r \\
	\varphi_+^{-2n} W^{-1} 	 & I_r \end{pmatrix}. \]
This leads to the second transformation where we open a lens around $[-1,1]$ as in Figure~\ref{fig:lens}, and we define
\begin{equation} \label{Sdef1}
	S = T
	\times 
	\begin{cases}
	\begin{pmatrix} I_r & 0_r \\
		-\varphi^{-2n} W^{-1}& I_r \end{pmatrix}, & 
	\textrm{ in upper part of the lens},\\
		\begin{pmatrix} I_r & 0_r \\
		\varphi^{-2n} W^{-1}& I_r \end{pmatrix}, & 
	\textrm{ in lower part of the lens},
	\end{cases}
	 \end{equation}
and 
\begin{equation} \label{Sdef2}
		S  = T \qquad \text{ outside of the lens}.
\end{equation}
In the
definition of $S$ we use the analytic continuation of the weight
matrix into the complex plane with branch cuts along
$(-\infty,-1]$ and $[1,\infty)$.
That is
\[ W(z) =  (1-z)^{\alpha} (1+z)^{\beta}  H(z). \]
We also make sure that the lens is inside the region 
where $H$ is analytic and invertible. 
\begin{figure}[h!]
	\centering
	\includegraphics[scale=1]{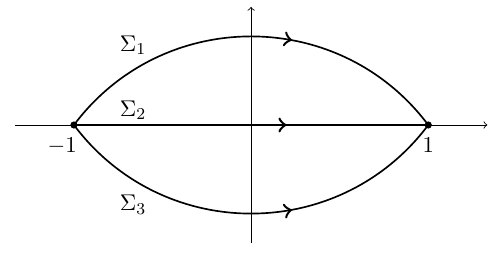}
\caption{Lens for the $T\mapsto S$ transformation. $\Sigma_S = \Sigma_1 \cup \Sigma_2 \cup \Sigma_3$ is the jump contour in the RH problem for $S$.}
\label{fig:lens}
\end{figure}

Then $S$ is defined and analytic in $\mathbb C \setminus \Sigma_S$
where $\Sigma_S=\Sigma_1\cup\Sigma_2\cup \Sigma_3$ consists of the interval $[-1,1]$
together with the upper and lower lip of the lens, see Figure \ref{fig:lens}. This matrix $S$ satisfies the following RH problem:
\begin{enumerate}
    \item $S(z)$ is analytic in $\mathbb{C}\setminus\Sigma_S$.
    \item For $z\in\Sigma_S$, we have the following jumps:
  \begin{align} \label{Sjump} S_+ = S_- \times
	\begin{cases} \begin{pmatrix} 0_r & W \\ -
			W^{-1} & 0_r \end{pmatrix} & \text{ on } (-1,1), \\
		\begin{pmatrix} I_r & 0_r \\
			\varphi^{-2n} W^{-1} & I_r \end{pmatrix}
		&  \text{ on } \Sigma_1 \text{ and } \Sigma_3. %\text{ on the lips of the lens}.
\end{cases} \end{align}  
    \item As $z\to\infty$, the matrix $S$ has the asymptotic behavior
\begin{equation} \label{Sasymp}
	S(z) = I_{2r} + \mathcal{O}(z^{-1}) \quad \text{ as } z \to \infty.
\end{equation}
    \item Outside the lens, $S$ has the same endpoint conditions as $T$ has. Inside the lens, the local behavior follows from the jump relations \eqref{Sjump}.
\end{enumerate}
%$S$ has the jump relations

%\textcolor{red}{We have
%\begin{equation} \label{PninS1} P_n(z) =
%	\begin{pmatrix} I_r & 0_r \end{pmatrix} S(z) \begin{pmatrix} I_r \\ 0_r \end{pmatrix} \left(\frac{\varphi(z)}{2} %\right)^n \end{equation}
%outside the lense, and
%\begin{equation} \label{PninS2} P_n(z) =  	2^{-n}
	%\begin{pmatrix} I_r & 0_r \end{pmatrix} S(z)
	%\begin{pmatrix} \varphi(z)^n \\ \varphi(z)^{-n} W(z)^{-1} \end{pmatrix}
%\end{equation}
%in the upper part of the lense.}

\subsection{Global parametrix and proof of Proposition \ref{prop12}}
\label{sec:GlobalParametrix}
The jump matrix on the lips of the lens in \eqref{Sjump} tends to
the identity matrix as $n \to \infty$, because $\varphi(z)$ maps $\mathbb{C}\setminus[-1,1]$ onto the exterior of the unit circle,
and therefore $|\varphi(z)|>1$ for $z\in\mathbb{C}\setminus[-1,1]$. We ignore these jumps and look for a global parametrix $M$ satisfying the following RH problem:
\begin{enumerate}
    \item $M(z)$ is analytic in $\mathbb{C}\setminus[-1,1]$.
    \item On $(-1,1)$ we have the jump relation
    \begin{align} \label{Mjump} M_+ & = M_-
	\begin{pmatrix} 0_r & W \\ - W^{-1} & 0_r
	\end{pmatrix}.% \quad \text{ on } (-1,1)
\end{align}
    \item As $z\to\infty$, the matrix $M$ has the asymptotic behavior 
   \begin{equation} \label{Masymp}
	M(z) = I_{2r} + \mathcal{O}(z^{-1}). %  \quad \text{ as } z \to \infty.
\end{equation} 
\end{enumerate}

The case $W = I_r$ can be readily solved. The solution is
\begin{align}  \label{M0def}
	M_0(z) & = \frac{1}{2} \begin{pmatrix}
		I_r & i I_r \\ i I_r & I_r \end{pmatrix}
		\begin{pmatrix} \gamma(z) I_r & 0_r \\ 0_r & \gamma(z)^{-1} 	I_r	\end{pmatrix} 
		\begin{pmatrix}
			I_r & -i I_r \\ -i I_r & I_r \end{pmatrix} \\
	& = 
	\frac{1}{2}
	\begin{pmatrix}
		 (\gamma(z) + \gamma(z)^{-1}) I_r
		&
		-i(\gamma(z) - \gamma(z)^{-1}) I_r \\[5pt]
		i (\gamma(z) - \gamma(z)^{-1}) I_r &
		(\gamma(z) + \gamma(z)^{-1}) I_r
	\end{pmatrix}, \quad \gamma(z) = \left(\frac{z-1}{z+1} \right)^{1/4}.  \nonumber
\end{align}
The solution for general $W$ requires the matrix Szeg\H{o} function $D$ from Proposition~\ref{prop12}.

In Lemma \ref{lemma31} and throughout the paper,  we use the notation $D(\infty)^{-\ast}$, 
where for an invertible matrix  $X$ we put  $X^{-\ast} := \left(X^{-1}\right)^{\ast} = 
\left(X^{\ast} \right)^{-1}$.

\begin{lemma} \label{lemma31} Let  $D$ be the matrix Szeg\H{o} function
	for $W$. Then $M$ defined by 
	\begin{equation} \label{Mdef}
		M(z) =
		\begin{pmatrix} D(\infty) & 0_r \\
			0_r & D(\infty)^{-\ast} \end{pmatrix}
		M_0(z) \begin{pmatrix} D(z)^{-1} & 0_r
			\\ 0_r & D(\overline{z})^{\ast} \end{pmatrix} \end{equation}
	satisfies the above RH problem for $M$.
\end{lemma}
\begin{proof}
	Note that $z \mapsto D(\overline{z})^\ast = \left(\overline{D(\overline{z}})\right)^T$ is analytic
	for $z \in \mathbb C \setminus [-1,1]$, as it
	involves two anti-holomorphic conjugations, and
	therefore $M$ is analytic. The asymptotic behavior \eqref{Masymp}
	is satisfied because $M_0(z) \to I_{2r}$
	and $D(z) \to D(\infty)$ as $z \to \infty$. 
	It remains to check the jump \eqref{Mjump}.
	
	For $x \in (-1,1)$, we have by \eqref{Mdef} and the
	jump property
	$M_{0,+} = M_{0,-} \begin{pmatrix} 0_r & I_r \\ -I_r & 0_r
	\end{pmatrix} $ of $M_0$ that
	\begin{align*} 
		M_-(x)^{-1} M_+(x)
		& = \begin{pmatrix} D_-(x) & 0_r \\
			0_r & D_+(x)^{-\ast}  \end{pmatrix} 
		\begin{pmatrix} 0_r & I_r \\ - I_r & 0_r \end{pmatrix}
		\begin{pmatrix} D_+(x)^{-1} & 0_r \\
			0_r & D_-(x)^\ast \end{pmatrix} \\
		& =  \begin{pmatrix} 0_r & D_-(x) D_-(x)^\ast \\
			- \left(D_+(x) D_+(x)^\ast \right)^{-1} & 0_r \end{pmatrix}		
	\end{align*}
	 and this is $\begin{pmatrix} 0_r & W(x) \\ -W(x)^{-1} & 0_r
	 	\end{pmatrix}$ because of the defining 
 	property \eqref{WDfactor} of $D$.  
\end{proof}	
	
We still have to show existence of the matrix Szeg\H{o} function, thereby proving Proposition~\ref{prop12}. 

We find $D(z)$ through a 
matrix valued factorization theorem
that we pose on the unit circle by means of the conformal map
$\varphi$, whose inverse is the rational function
\[ \varphi^{-1}(z) = \frac{z+z^{-1}}{2} . \]
Then $W \left( \frac{z+ z^{-1}}{2} \right)$, $|z|=1$,
is a matrix valued function
on the unit circle that is Hermitian positive definite except possibly
at $z=\pm 1$.
A classical result of Wiener and Masani \cite[Theorem 7.13]{WM57}, and  Helson and
Lowdenslager \cite[Theorem 9]{HL58} states that a factorization
\begin{equation} \label{WGfactor}
	W \left( \frac{z+z^{-1}}{2} \right) = G(z) G(z)^\ast,
	\qquad |z| = 1, \end{equation}
exists where $G$ is analytic and invertible in
the interior $|z| < 1$. In addition, $G$ is unique if we 
specify that $G(0)$ is Hermitian positive definite, but we do not insist on
the uniqueness here.
 The factorization \eqref{WGfactor} is valid
under the matrix  Szeg\H{o} condition
\begin{equation} \label{Szcondition}
	\frac{1}{2\pi} \int_{-\pi}^{\pi}
	\log \det W \left( \cos \theta \right) d\theta > - \infty, \end{equation}
which is certainly satisfied in the present situation. In the general setting
the identity \eqref{WGfactor} holds a.e.\ on the unit circle, 
but in our setting it is valid everywhere,
except possibly at $\pm 1$.
 
%For more information and on the matrix factorization problem \ref{WGfactor}, see the recent \cite{ES22} and references cited therein.

\begin{lemma} \label{lem:Ddef}
	Let $G$ solve the matrix factorization problem \ref{WGfactor}.
	Then the matrix Szeg\H{o} function is given by
    \begin{equation} \label{Ddef}
	D(z) = G\left(\frac{1}{\varphi(z)}\right), \qquad z \in \mathbb C \setminus [-1,1].
\end{equation}
\end{lemma}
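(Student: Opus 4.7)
The plan is to verify directly that $D(z) = G(1/\varphi(z))$ satisfies the four requirements collected in Proposition \ref{prop12}. The key structural point is that $\varphi$ is a biholomorphism between $\mathbb C\setminus[-1,1]$ and $\{|w|>1\}$, so $z\mapsto 1/\varphi(z)$ is a biholomorphism onto the open unit disk, i.e.\ exactly the region on which $G$ is guaranteed to be analytic and invertible by the Wiener--Masani/Helson--Lowdenslager factorization \eqref{WGfactor}.

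First I would check analyticity and invertibility of $D$ on $\mathbb C\setminus[-1,1]$: this is immediate from the composition, since $G$ is analytic and invertible on $\{|w|<1\}$. Next, as $z\to\infty$ we have $\varphi(z)\to\infty$, hence $1/\varphi(z)\to 0$, so $D(\infty)=G(0)$ exists and is invertible (the normalization of Proposition \ref{prop12} can be enforced by choosing $G$ so that $G(0)$ is Hermitian positive definite).

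The substantive step is the boundary identity \eqref{WDfactor}. For $x\in(-1,1)$ write $x=\cos\theta$ with $\theta\in(0,\pi)$. A short computation with the chosen branch of $(z^2-1)^{1/2}$ gives $\varphi_+(x)=e^{i\theta}$ and $\varphi_-(x)=e^{-i\theta}$, so
\begin{equation*}
  D_+(x)=G(e^{-i\theta}),\qquad D_-(x)=G(e^{i\theta}).
\end{equation*}
Since $(w+w^{-1})/2=\cos\theta=x$ for both choices $w=e^{\pm i\theta}$, evaluating \eqref{WGfactor} at $w=e^{\mp i\theta}$ yields
\begin{equation*}
  D_\pm(x)D_\pm(x)^\ast = G(e^{\mp i\theta})G(e^{\mp i\theta})^\ast = W\!\left(\tfrac{e^{\mp i\theta}+e^{\pm i\theta}}{2}\right)=W(x),
\end{equation*}
which is exactly \eqref{WDfactor}.

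The main obstacle I anticipate is the existence of the boundary values $G(e^{\pm i\theta})$ as genuine (rather than merely a.e.) limits, so that the identity above holds pointwise for every $x\in(-1,1)$ and not just almost everywhere on the orthogonality interval. This is where Assumption \ref{assumptions} is essential: $W\bigl((w+w^{-1})/2\bigr)$ extends as a real analytic, Hermitian positive definite matrix function on every closed subarc of $\{|w|=1\}\setminus\{\pm 1\}$, and a standard reflection/Schwarz--type argument (or local analytic factorization via the spectral decomposition of $W$, which is real analytic by Lemma \ref{Rellich}) shows that $G$ extends analytically across such arcs. Consequently $G$, and therefore $D$ via the composition, has continuous (in fact analytic) boundary values on $(-1,1)$, which upgrades the a.e.\ identity from the Wiener--Masani theorem to the pointwise factorization needed in Proposition \ref{prop12}.
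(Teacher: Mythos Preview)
Your proof is correct and follows essentially the same approach as the paper: compose $G$ with $1/\varphi$, use that $\varphi$ maps $\mathbb C\setminus[-1,1]$ onto the exterior of the unit disk to get analyticity and invertibility, identify $D(\infty)=G(0)$, and then verify \eqref{WDfactor} by writing $\varphi_\pm(x)=e^{\pm i\theta}$ and plugging into \eqref{WGfactor}. Your discussion of why the boundary values exist pointwise (rather than merely a.e.) is more explicit than the paper's, which simply asserts this regularity in the text preceding the lemma; but this is a welcome clarification, not a different method.
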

\begin{proof}
Since $|\varphi(z)| > 1$ for every $z \in \mathbb C \setminus [-1,1]$,
we have that \eqref{Ddef} is well-defined and analytic for $|z|<1$. 
Also $D(z)$ is invertible for every $z \in \mathbb C \setminus  [-1,1]$, and  $D(\infty) = G(0)$ is also invertible, due to
the corresponding property of $G$.

To check the property \eqref{WDfactor} we let $x \in (-1,1)$
and write $z = \varphi_+(x)$. Then $|z| = 1$, $\overline{z} = 1/z = \varphi_-(x)$ and $x = \frac{z + z^{-1}}{2}$. From \eqref{Ddef}
we get $D_-(x) = G \left(\frac{1}{\varphi_-(x)} \right) = G(z)$
and thus by \eqref{WGfactor}
\[ D_-(x) D_-(x)^\ast = W\left(\frac{z+z^{-1}}{2}\right)	= W(x), 
\]
which is the first identity in \eqref{WDfactor}. The
second identity follows in similar fashion. We have
$ D_+(x) = G\left(\frac{1}{\varphi_+(x)}\right)	= G(\overline{z})$
and by \eqref{WGfactor}
\[ D_+(x) D_+(x)^\ast = W \left( \frac{\overline{z} + \overline{z}^{-1}}{2} \right)
= W \left(\frac{z^{-1}+z}{2} \right)= W(x). \]
\end{proof}

\begin{remark} \label{remark21}
	If $D$ is chosen such that $D(\infty)$ is Hermitian positive
	definite, then $D$ satis\-fies
	\[ D(\overline{z}) = D(z)^\ast, \qquad z \in \mathbb C \setminus
	[-1,1]. \]
	
	The identity $D_+(x) D_+(x)^\ast$ shows that
	$|D_+(x)| = \sqrt{W(x)}$ in the sense of the
	polar decomposition
	\begin{equation} \label{Dpolar}
		D_+(x) = \sqrt{W(x)} U^\ast(x), \qquad -1 < x < 1, \end{equation}
	where $U$ is a unitary matrix, and $\sqrt{W(x)}$ 
	denotes the positive definite square root of the  
	Hermitian positive definite matrix.
	Thus
	\[ \sqrt{W(x)} = D_+(x) U(x), \qquad -1 < x < 1. \]
\end{remark}

\subsection{Local parametrix around $z=1$}

\subsubsection{Statement}
We fix a disk $D(1,\delta)
= \{ z \in \mathbb C \mid |z-1| < \delta\}$,
around $1$, with radius $\delta > 0$ sufficiently small. The local parametrix $P$ should satisfy the following RH problem:
\begin{enumerate}
    \item $P(z)$ is analytic for $z\in D(1,\delta)\setminus \Sigma_S$.
    \item For $z\in D(1,\delta)\cap \Sigma_S$, the matrix $P(z)$ should have the same jumps as $S$ in this disk, see also Figure \ref{fig:P}:
   \begin{equation} \label{Pjump}
	P_+ = P_- \times
	\begin{cases} \begin{pmatrix} 0_r & W \\ - W^{-1} & 0_r \end{pmatrix} &
		\text{ on } (1-\delta,1), \\
		\begin{pmatrix}	I_r & 0_r \\ \varphi^{-2n} W^{-1} & I_r \end{pmatrix}
		& \begin{array}{l} \text{on the lips of the} \\ \text{lens inside the disk}.
		\end{array}
	\end{cases} \end{equation}	
	\item As $n\to\infty$, uniformly for $z\in\partial D(1,\delta)\setminus\Sigma_S$, we have the matching condition \begin{equation}\label{eq:matchingPM}
	    P(z)M^{-1}(z)=I_{2r}+\mathcal{O}(n^{-1}).
	\end{equation}
	\item As $z\to 1$, $P(z)$ has the same behavior as $S(z)$
	in the sense that $S(z) P^{-1}(z)$ remains bounded as $z \to 1$.
\end{enumerate}

%the same jumps as 

\begin{figure}[t]
	\centering
	\includegraphics[scale=1.25]{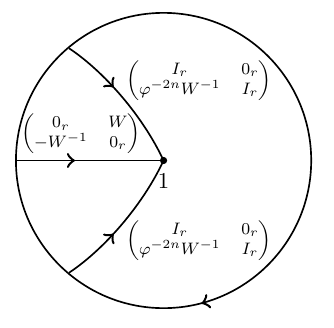}
	\caption{Contours and jumps in the RH problem for $P(z)$.}
	\label{fig:P}
\end{figure}

%These are the same jumps that $S$ has.

\subsubsection{Properties of $V$}
Recall that $V$ is defined in \eqref{Vdef} in 
terms of the modified eigenvalues \eqref{lambdajdef}.
We need the following properties.

\begin{lemma} \label{lemma34}
	$V$ is analytic in $D(1,\delta) \setminus [1-\delta,1]$
	and it satisfies \eqref{WVfactor},
	and
	\begin{align} \label{WVupper}
		W(z) & = V(z) \diag \left( e^{-\alpha_1 \pi i}, \ldots, e^{-\alpha_r \pi i}
		\right) V(\overline{z})^\ast && \text{in upper half plane within $D(1,\delta)$}, \\
		\label{WVlower}
		W(z) & = V(z) \diag \left( e^{\alpha_1 \pi i}, \ldots, e^{\alpha_r \pi i}
		\right) V(\overline{z})^\ast && \text{in lower half plane within $D(1,\delta)$}.
	\end{align}
\end{lemma}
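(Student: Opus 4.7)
The plan is careful bookkeeping of branch cuts and phase factors. All four factors making up $V$ in \eqref{Vdef} are analytic on $D(1,\delta)\setminus(-\infty,1]$: $Q(z)$ extends analytically to a neighborhood of $[-1,1]$ by Lemma \ref{Rellich}; $(z+1)^{\beta/2}$ is analytic on $D(1,\delta)$ provided $\delta<2$; while $(z-1)^{\alpha/2}$ and each $\widetilde{\lambda}_j^{1/2}$ carry a branch cut on $(-\infty,1]$. Intersecting with the disk gives the asserted analyticity of $V$ on $D(1,\delta)\setminus[1-\delta,1]$.

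For the factorization \eqref{WVfactor}, I would first unpack $\widetilde{\lambda}_j^{1/2}$ near $1$. Writing $\lambda_j(z)=(z-1)^{n_j} h_j(z)$ with $h_j$ analytic and $h_j(1)\ne 0$, the positivity of $\lambda_j$ on $(-1,1)$ together with $\widetilde{\lambda}_j=(-1)^{n_j}\lambda_j$ forces $k_j(z):=(-1)^{n_j} h_j(z)$ to satisfy $k_j(1)>0$, and hence $\widetilde{\lambda}_j^{1/2}(z)=(z-1)^{n_j/2} k_j(z)^{1/2}$ under principal branches. Taking $\pm$-boundary values on $(1-\delta,1)$ yields $|(z-1)_{\pm}^{\alpha/2}|^{2}=(1-x)^{\alpha}$ and $|\widetilde{\lambda}_{j,\pm}^{1/2}(x)|^{2}=(1-x)^{n_j} k_j(x)=\lambda_j(x)$, from which
\[
V_{\pm}(x) V_{\pm}(x)^{\ast}=(1-x)^{\alpha}(1+x)^{\beta} Q(x)\Lambda(x) Q(x)^{\ast}=W(x).
\]

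For \eqref{WVupper} and \eqref{WVlower}, I would apply the Schwarz reflection principle to each factor: all scalar factors are real and positive on $(1,1+\delta)$ and $Q$ is real analytic, so $f(\bar z)=\overline{f(z)}$ holds on $D(1,\delta)\setminus(-\infty,1]$ for every factor $f$; consequently $V(\bar z)=\overline{V(z)}$, hence $V(\bar z)^{\ast}=V(z)^{T}$. Substituting this into the right-hand side of \eqref{WVupper} and using that $\widetilde{\Lambda}^{1/2}(z)$ is diagonal gives
\[
V(z)\diag\bigl(e^{-\alpha_j\pi i}\bigr) V(\bar z)^{\ast}=(z-1)^{\alpha}(z+1)^{\beta} Q(z)\diag\bigl(e^{-\alpha_j\pi i}\widetilde{\lambda}_j(z)\bigr) Q(z)^{T}.
\]
The identities $\alpha_j=\alpha+n_j$ and $\widetilde{\lambda}_j=(-1)^{n_j}\lambda_j$ collapse $e^{-\alpha_j\pi i}\widetilde{\lambda}_j$ to $e^{-\alpha\pi i}\lambda_j$, and the upper-half-plane branch identity $(z-1)^{\alpha}=e^{i\alpha\pi}(1-z)^{\alpha}$ then converts $(z-1)^{\alpha} e^{-\alpha\pi i}\lambda_j$ to $(1-z)^{\alpha}\lambda_j$, restoring $W(z)=(1-z)^{\alpha}(1+z)^{\beta} Q(z)\Lambda(z) Q(z)^{T}$. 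Formula \eqref{WVlower} follows identically from the conjugate branch identity $(z-1)^{\alpha}=e^{-i\alpha\pi}(1-z)^{\alpha}$ valid in the lower half plane.

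The main delicate point is matching the half-integer power $(z-1)^{n_j/2}$ inside $\widetilde{\lambda}_j^{1/2}$ with the integer power $(1-x)^{n_j}$ appearing in $\lambda_j(x)$; once $k_j(1)>0$ is established from $\widetilde{\lambda}_j=(-1)^{n_j}\lambda_j$ and the positivity of $\lambda_j$, all phase factors line up and the remainder of the computation is essentially mechanical.
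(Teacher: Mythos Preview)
Your argument is essentially the paper's, and the analyticity and \eqref{WVfactor} parts are fine. However, there is a gap in your treatment of \eqref{WVupper}--\eqref{WVlower}: you invoke Schwarz reflection for $Q$, concluding $V(\bar z)^\ast = V(z)^T$ and ultimately $W(z)=(1-z)^{\alpha}(1+z)^{\beta} Q(z)\Lambda(z) Q(z)^{T}$. This is only valid when $Q(x)$ is \emph{real-valued} on $[-1,1]$. The phrase ``real analytic'' in Lemma~\ref{Rellich} means analytic as a function of a real variable; under Assumption~\ref{assumptions} the matrix $H(x)$ is merely Hermitian, so $Q(x)$ is unitary with possibly complex entries, and $Q(\bar z)=\overline{Q(z)}$ need not hold. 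In that generality your final expression $Q(z)\Lambda(z)Q(z)^T$ does not agree with $H(z)$ even on $(-1,1)$.

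The fix is exactly what the paper does: apply Schwarz reflection only to the scalar factors $(z-1)^{\alpha/2}$, $(z+1)^{\beta/2}$, and $\widetilde\lambda_j(z)^{1/2}$ (these are genuinely real on $(1,1+\delta)$), but keep $Q(\bar z)^\ast$ as is. One then arrives at $(1-z)^{\alpha}(1+z)^{\beta} Q(z)\Lambda(z) Q(\bar z)^{\ast}$, and a final sentence is needed: the analytic function $z\mapsto Q(z)\Lambda(z)Q(\bar z)^\ast$ coincides with $H(x)=Q(x)\Lambda(x)Q(x)^\ast$ for $x\in(-1,1)$, hence equals the analytic continuation of $H$ throughout. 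With that adjustment your proof is complete and matches the paper's.
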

\begin{proof}
	The analyticity is clear from \eqref{Vdef}. For $x \in (1-\delta,1)$, we have by \eqref{Vdef}
	\begin{equation}\label{eq:Vpm}
		V_{\pm}(x) = e^{\pm \alpha \pi i/2}
		(1-x)^{\alpha/2} (1+x)^{\beta/2} 
		Q(x) \widetilde{\Lambda}_{\pm}(x)^{1/2}.
	\end{equation}
and, 
	\begin{equation}\label{eq:Vpm2}
	V_{\pm}(x)^\ast = e^{\mp \alpha \pi i /2} 
	(1-x)^{\alpha/2} (1+x)^{\beta/2}
	\overline{\widetilde{\Lambda}_{\pm}(x)^{1/2}} Q(x)^\ast.
\end{equation}
We obtain \eqref{WVfactor} from \eqref{eq:Vpm} and \eqref{eq:Vpm2}
because of \eqref{Wdef}, \eqref{HQLambda} and the property 
\[ \widetilde{\Lambda}_{\pm}^{1/2}
	\overline{\widetilde{\Lambda}_{\pm}^{1/2}} =
		\Lambda. \]
The latter identity holds, since for each $j=1, \ldots, r$
and $x \in (1-\delta, 1]$, we have
\[ \widetilde{\lambda}_{j,\pm} (x)^{1/2} \overline{\widetilde{\lambda}_{j,\pm}(x)^{1/2}}
	= \left| \widetilde{\lambda}_j(x) \right| = \lambda_j(x). \]
see \eqref{lambdajdef}.

\medskip

To obtain \eqref{WVupper}, we first note that
an analytic function $f$ on $D(1,\delta) \setminus [1-\delta, 1]$
that is real valued on $(1,1+\delta)$
has the symmetry $\overline{f(\overline{z})} = f(z)$.
This applies to the functions $z \mapsto \widetilde{\lambda}_j(z)$,
and to $z \mapsto (z-1)^{\alpha/2}$, $z \mapsto
(z+1)^{\beta/2}$. Therefore we obtain from \eqref{Vdef}
\begin{align*} 
	V(\overline{z})^\ast
	& =  (z-1)^{\alpha/2} (z+1)^{\beta/2}
	\widetilde{\Lambda}(z)^{1/2} Q(\overline{z})^\ast.
	 \end{align*}
Then by this and \eqref{Vdef} the right-hand side of \eqref{WVupper} is
\begin{multline*}
	V(z) \diag \left( e^{-\alpha_1 \pi i}, \ldots,
		e^{-\alpha_r \pi i} \right) V(\overline{z})^\ast
		\\
	= (z-1)^{\alpha} (z+1)^{\beta}
		Q(z) \diag\left( e^{-\alpha_1 \pi i}
			\widetilde{\lambda}_1(z), \ldots,
			e^{-\alpha_r \pi i}
			\widetilde{\lambda}_r(z) \right) Q(\overline{z})^\ast.
\end{multline*}
Then using \eqref{alphajdef} and \eqref{lambdajdef},
wet get for $z \in D(1,\delta)$ with $\Im z > 0$,
\begin{multline} \label{WVupperproof}
	V(z) \diag \left( e^{-\alpha_1 \pi i}, \ldots,
	e^{-\alpha_r \pi i} \right) V(\overline{z})^\ast
	\\
	= e^{-\alpha \pi i} (z-1)^{\alpha} (1+z)^{\beta}
Q(z) \diag\left( (-1)^{n_1} \widetilde{\lambda}_1(z), \ldots,
(-1)^{n_r} \widetilde{\lambda}_r(z) \right) Q(\overline{z})^{\ast} \\
	= (1-z)^{\alpha} (1+z)^{\beta}
		Q(z) \Lambda(z) Q(\overline{z})^\ast.
\end{multline}
We finally note that $Q(z) \Lambda(z) Q(\overline{z})^\ast$
is analytic and agrees with $H(z)$ for $z \in (-1,1)$
because of \eqref{HQLambda}, and so it is equal to the
analytic continuation of $H$ into the complex plane. 
Thus \eqref{WVupperproof} is equal to the analytic
continuation of $W$ into the upper half-plane by \eqref{Wdef}.
This proves \eqref{WVupper}. 

The proof of \eqref{WVlower} is similar. The only difference
is that $e^{\alpha \pi i}(z-1)^{\alpha} = (1-z)^{\alpha}$ for $z$
in the lower half plane.
\end{proof}

\subsubsection{Reduction to constant jumps}

Having $V$ we seek $P$ in the form
\begin{equation} \label{Pdef} P(z) = E_n(z) P^{(1)}(z) \begin{pmatrix} \varphi(z)^{-n} V^{-1}(z) & 0_r \\
		0_r & \varphi(z)^n V(\overline{z})^\ast \end{pmatrix} \end{equation}
with an analytic prefactor $E_n(z)$, and an unknown $P^{(1)}(z)$ that are yet to
be determined. The properties of $V$ will guarantee that $P^{(1)}$ needs to have 
piecewise constant jumps. 

\begin{lemma}
	Suppose that $P$ is defined by \eqref{Pdef}
	with an analytic prefactor and invertible $E_n$.
	Then $P$ satisfies the jump conditions \eqref{Pjump}
	if and only if $P^{(1)}$ satisfies
	\begin{equation} \label{P1jump}
		P^{(1)}_+=P^{(1)}_- \times \begin{cases}
			\begin{pmatrix} 0_r & I_r \\
			- I_r & 0_r \end{pmatrix} & \text{on } (1-\delta,1), \\
	 	\begin{pmatrix} I_r  &  0_r \\
		 	e^{\vec{\alpha} \pi i} & I_r \end{pmatrix}
	 	& \begin{array}{l} \text{on the upper lip of the} \\
	 		\text{lens inside the disk,} \end{array} \\
 		\begin{pmatrix} I_r  &  0_r \\
 			e^{- \vec{\alpha} \pi i} & I_r \end{pmatrix}
 		& \begin{array}{l} \text{on the lower lip of the} \\
 			\text{lens inside the disk,} \end{array}
		 \end{cases}
	\end{equation}
	where
	$\vec{\alpha} = \diag \left( \alpha_1, \ldots, \alpha_r \right)$ and
$e^{\pm \vec{\alpha} \pi i} =
	\diag\left( e^{\pm \alpha_1 \pi i},\ldots, e^{\pm \alpha_r \pi i}
	\right).$
\end{lemma}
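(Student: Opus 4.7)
The statement is an ``if and only if'' about jump relations, so the natural route is to directly translate the jump of $P$ into a jump of $P^{(1)}$ using the conjugation formula \eqref{Pdef}. Since $E_n$ is analytic across all the arcs inside $D(1,\delta)$, it drops out of all jump computations, and denoting
\[ J(z) = \begin{pmatrix} \varphi(z)^{-n} V(z)^{-1} & 0_r \\ 0_r & \varphi(z)^n V(\bar z)^\ast \end{pmatrix}, \]
one has $P_-^{-1}P_+ = J_-^{-1}\bigl(P^{(1)}_-{}^{-1}P^{(1)}_+\bigr) J_+$ on every arc of $\Sigma_S \cap D(1,\delta)$. Thus the required $P^{(1)}$-jump on a given arc is uniquely determined as
\[ P^{(1)}_-{}^{-1}P^{(1)}_+ \;=\; J_- \cdot J_S \cdot J_+^{-1}, \]
where $J_S$ is the prescribed $S$-jump on that arc from \eqref{Pjump}. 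The proof is just to verify, arc by arc, that this product equals the constant matrix listed in \eqref{P1jump}.

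On the upper lip of the lens, which lies strictly in the upper half of $D(1,\delta)$, the functions $\varphi$, $V(z)$, and $V(\bar z)^\ast$ are all analytic (the branch cut of $V$ is along $(-\infty,1]$, and the branch cut of $z\mapsto V(\bar z)^\ast$ is the reflection of that, which is still $(-\infty,1]$). Hence $J_-=J_+=J$, and a direct block-matrix computation reduces the conjugation to the single expression $V(\bar z)^\ast W(z)^{-1} V(z)$ in the lower-left $r\times r$ entry. The factorization \eqref{WVupper} gives $V(\bar z)^\ast = e^{\vec\alpha\pi i}V(z)^{-1}W(z)$ in the upper half plane, so this entry collapses to $e^{\vec\alpha\pi i}$, yielding the required jump. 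The lower lip is handled identically using \eqref{WVlower}.

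On the interval $(1-\delta,1)$ the situation is slightly more delicate because now both $\varphi$ and $V$ jump. Here I would use $\varphi_+\varphi_-=1$ together with the fact that, at a point $x$ in the interval, $V(\bar z)^\ast|_{z=x\pm i0} = V_\mp(x)^\ast$ (the reflection swaps $+$ and $-$ sides). Expanding $J_- \begin{pmatrix}0&W\\-W^{-1}&0\end{pmatrix}J_+^{-1}$ by blocks, the off-diagonal entries involve $V_-^{-1}(x)W(x)V_-(x)^{-\ast}$ and $V_+(x)^\ast W(x)^{-1}V_+(x)$, both of which reduce to the identity matrix because of the two factorizations $W(x) = V_\pm(x)V_\pm(x)^\ast$ from \eqref{WVfactor}; the scalar $\varphi$-factors cancel by $\varphi_+\varphi_-=1$. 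The resulting jump is exactly $\begin{pmatrix}0&I_r\\-I_r&0\end{pmatrix}$, as claimed.

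The computation is essentially bookkeeping, and the only place where anything nontrivial is used is the conversion from the $W$-dependent jump of $S$ to a constant jump for $P^{(1)}$; this is precisely where the three factorizations of $W$ supplied by Lemma \ref{lemma34} come in, and this is what dictates that the appropriate normalization factor is the $V$-block in \eqref{Pdef} rather than, say, a $D$-block. There is no real obstacle: once the conjugation structure is set up, each of the three arcs is a short direct calculation, and the ``only if'' direction follows from the invertibility of the $J_\pm$ on each arc, which is guaranteed by invertibility of $V$ and nonvanishing of $\varphi$ off the branch cut.
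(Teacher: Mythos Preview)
Your proposal is correct and follows essentially the same approach as the paper's proof: both compute $(P^{(1)}_-)^{-1}P^{(1)}_+$ by conjugating the $S$-jump \eqref{Pjump} with the block-diagonal factor $J$, drop $E_n$ by analyticity, and then reduce each arc to a constant jump using $\varphi_+\varphi_-=1$ together with the factorizations \eqref{WVfactor}, \eqref{WVupper}, \eqref{WVlower} from Lemma~\ref{lemma34}. The only cosmetic difference is the order in which you treat the arcs (lips first, then the interval) and that you make the reflection $V(\bar z)^\ast|_{z=x\pm i0}=V_\mp(x)^\ast$ explicit, which the paper leaves implicit.
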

\begin{proof}
This is a straightforward calculation. The prefactor $E_n$
does not influence the jumps. Then it follows from \eqref{Pdef}
that
\begin{multline*} 
	\left(P_-^{(1)}(z) \right)^{-1} P_+^{(1)}(z) \\
	= \begin{pmatrix} \varphi(z)^{-n} V^{-1}(z) & 0_r \\
		0_r & \varphi(z)^{n} V(\overline{z})^\ast
	\end{pmatrix}_-  P_-^{-1}(z) P_+(z) 
	\begin{pmatrix} \varphi(z)^n V(z) & 0_r \\
		0_r & \varphi(z)^{-n} V(\overline{z})^{-\ast}
	\end{pmatrix}_+
	\end{multline*}
For $z = x \in (1-\delta, 1)$ this gives us, because of \eqref{Pjump} 
and the fact that $\varphi_+(x) \varphi_-(x) = 1$, 
\begin{multline*}
		\left(P_-^{(1)}(x) \right)^{-1} P_+^{(1)}(x) 
= \begin{pmatrix} \varphi_-(x)^{-n} V_-^{-1}(x) & 0_r \\
	0_r & \varphi_-(x)^{n} V_+(x)^\ast
\end{pmatrix} \\
\times \begin{pmatrix} 0_r & W(x) \\ -W^{-1} & 0_r
\end{pmatrix} 
\begin{pmatrix} \varphi_+(x)^n V_+(x) & 0_r \\
	0_r & \varphi_+(x)^{-n} V_-(x)^{-\ast} 
\end{pmatrix}\\
	= \begin{pmatrix} 0_r  & V_-^{-1}(x) W(x) V_-(x)^{-\ast} \\
	- V_+(x)^\ast W(x)^{-1} V_+(x) & 0_r \end{pmatrix}.
	\end{multline*}
Because of \eqref{WVfactor}, this reduces to
$\begin{pmatrix} 0_r & I_r \\ -I_r & 0_r \end{pmatrix}$
which gives us the required jump \eqref{P1jump} on $(1-\delta,1)$.
The jumps \eqref{P1jump} on the lips of the lens follow
in a similar way, where we now use the
identities \eqref{WVupper} and \eqref{WVlower}. 

Thus the jump conditions \eqref{Pjump} imply those in \eqref{P1jump}. It is easy to revert the arguments to show
that the jumps \eqref{P1jump} imply \eqref{Pjump}, which proves
the lemma. 
\end{proof}

\subsubsection{Bessel functions}

To construct $P^{(1)}(z)$, we use the standard size $2 \times 2$ local parametrix with
(modified) Bessel functions
from the paper \cite{KMVV04}, see formulas (6.23)--(6.25) therein,
that we reproduce here for ease of reference.
It involves the modified Bessel functions $I_{\alpha}$
and $K_{\alpha}$ of order $\alpha$, as well as the two
Hankel functions $H^{(1)}_{\alpha}$ and $H^{(2)}_{\alpha}$
of order $\alpha$.
\begin{multline} \label{Psialphadef}
	\Psi_{\alpha}(\zeta)  = \\
	\begin{cases} \begin{pmatrix}
		I_{\alpha}(2 \zeta^{1/2}) & \frac{i}{\pi} K_{\alpha}(2\zeta^{1/2}) \\
		2\pi i \zeta^{1/2} I_{\alpha}'(2\zeta^{1/2}) & 
		-2 \zeta^{1/2} K_{\alpha}'(2\zeta^{1/2}) 
		\end{pmatrix}, & |\arg \zeta| < \frac{2\pi}{3}, \\
		\begin{pmatrix}
		\frac{1}{2} H^{(1)}_{\alpha}(2 (-\zeta)^{1/2}) & \frac{1}{2} H^{(2)}_{\alpha}(2(-\zeta)^{1/2}) \\
		\pi \zeta^{1/2} \left(H^{(1)}_{\alpha}\right)'(2(-\zeta)^{1/2}) & 
		\pi \zeta^{1/2} \left(H^{(2)}_{\alpha}\right)'(2(-\zeta)^{1/2}) 
	\end{pmatrix} & \\
	\hfill{\times \begin{pmatrix} e^{\frac{1}{2} \alpha \pi i} & 0 \\
		0 & e^{-\frac{1}{2} \alpha \pi i} \end{pmatrix}}, 
	 & \frac{2\pi}{3} < \arg \zeta < \pi, \\
	\begin{pmatrix}
	\frac{1}{2} H^{(2)}_{\alpha}(2 (-\zeta)^{1/2}) & -\frac{1}{2} H^{(1)}_{\alpha}(2(-\zeta)^{1/2}) \\
	-\pi \zeta^{1/2} \left(H^{(2)}_{\alpha}\right)'(2(-\zeta)^{1/2}) & 
	\pi \zeta^{1/2} \left(H^{(1)}_{\alpha}\right)'(2(-\zeta)^{1/2}) 
\end{pmatrix} \\
	\hfill{\times \begin{pmatrix} e^{-\frac{1}{2} \alpha \pi i} & 0
			\\ 0 & e^{\frac{1}{2} \alpha \pi i} \end{pmatrix}}, & -\pi < \arg \zeta < -\frac{2\pi}{3}. 	
	\end{cases}
\end{multline}
We evaluate $\Psi_{\alpha}$ at $\zeta = n^2 f(z)$, where
\begin{equation} \label{fdef}
	f(z)= \frac{1}{4} \left(\log \varphi(z)\right)^2 \end{equation}
is a conformal map from $D(1,\delta)$ to a neighborhood of
$\zeta = f(1) = 0$. We may (and do) assume that the
lens is opened in such a way that $\arg f(z) = \pm 2\pi /3$
for $z$ on the lips of the lens within $D(1,\delta)$.
Then 
\begin{equation} \label{Psijump1}
	\Psi_{\alpha}(n^2 f(z))_+  = \Psi_{\alpha}(n^2 f(z))_-
	\begin{cases}
	\begin{pmatrix} 0 & 1 \\ -1 & 0 \end{pmatrix},&
	\quad \text{ on } (1-\delta,1), \\ %\label{Psijump2}
	%\Psi_{\alpha}(n^2 f(z))_+ & = \Psi_{\alpha}(n^2 f(z))_-
	\begin{pmatrix} 1 & 0 \\ e^{\pm \alpha \pi i} & 0 \end{pmatrix},& 
	\quad \text{ on lips of the lens.} 
	\end{cases}
\end{equation}	

We use $\Psi_{\alpha}$ in block form with parameters $\alpha_1, \ldots, \alpha_r$. We also need the  permutation matrix $\Pi_r$ of
size $2r \times 2r$ with
\begin{equation} \label{Pirdef}
	 \left(\Pi_r\right)_{2j-1,j} = 1, \quad 
	 \left(\Pi_r\right)_{2j,j+r} = 1,
	 \qquad \text{for } j=1, \ldots, r,
\end{equation}
while $(\Pi_r)_{j,k} = 0$ otherwise. Thus for example,
\begin{equation*} 
	\Pi_2 = \begin{pmatrix} 1 & 0 & 0 & 0 \\
		0 & 0 & 1 & 0 \\
		0 & 1 & 0 & 0 \\
		0 & 0 & 0 & 1 \end{pmatrix}, \quad \Pi_3 = \begin{pmatrix}
		1 & 0 & 0 & 0 & 0 & 0 \\
		0 & 0 & 0 & 1 & 0 & 0 \\
		0 & 1 & 0 & 0 & 0 & 0 \\
		0 & 0 & 0 & 0 & 1 & 0 \\
		0 & 0 & 1 & 0 & 0 & 0 \\
		0 & 0 & 0 & 0 & 0 & 1 \end{pmatrix}. 
\end{equation*}

\begin{lemma}
	We define
	\begin{equation} \label{P1def}
		P^{(1)}(z)
		= 
		\Pi_r^{-1}
		\diag \left( \Psi_{\alpha_1}(n^2 f(z)), \ldots,
		\Psi_{\alpha_r}(n^2 f(z)) \right)  \Pi_r, \end{equation}
	where $\diag(\ldots)$ denotes a block diagonal matrix of size $2r \times 2r$
	with blocks of size $2 \times 2$.
	Then $P^{(1)}$  satisfies
	the jump properties \eqref{P1jump}.
\end{lemma}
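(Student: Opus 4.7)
The plan is purely algebraic: since $\Pi_r$ is a constant permutation matrix, the jumps of $P^{(1)}$ on each piece of $\Sigma_S \cap D(1,\delta)$ are obtained by conjugating the block-diagonal jump of
\[
\Phi(z) := \diag\!\bigl(\Psi_{\alpha_1}(n^2 f(z)), \ldots, \Psi_{\alpha_r}(n^2 f(z))\bigr)
\]
by $\Pi_r$, so the whole question reduces to checking how $\Pi_r^{-1}(\cdot)\Pi_r$ acts on $2\times 2$ block-diagonal matrices.

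First I would observe that because $f$ is conformal on $D(1,\delta)$ with $f(1)=0$, and because the lens has been opened so that its lips inside $D(1,\delta)$ are mapped by $n^2 f$ onto the rays $\arg\zeta = \pm 2\pi/3$, each factor $\Psi_{\alpha_j}(n^2 f(z))$ is analytic on $D(1,\delta)\setminus\Sigma_S$ and satisfies the scalar jump relations \eqref{Psijump1} with the appropriate parameter $\alpha_j$. Consequently $\Phi_+(z) = \Phi_-(z) J(z)$, where $J(z)$ is the block-diagonal $2r\times 2r$ matrix whose $j$-th $2\times 2$ block equals $\bigl(\begin{smallmatrix} 0 & 1 \\ -1 & 0\end{smallmatrix}\bigr)$ on $(1-\delta,1)$ and $\bigl(\begin{smallmatrix} 1 & 0 \\ e^{\pm\alpha_j\pi i} & 1\end{smallmatrix}\bigr)$ on the upper/lower lip of the lens.

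Next I would verify the key linear-algebra identity: for any scalars $a_j,b_j,c_j,d_j$ ($j=1,\ldots,r$),
\[
\Pi_r^{-1}\diag\!\left(\begin{pmatrix} a_j & b_j \\ c_j & d_j\end{pmatrix}\right)_{j=1}^{r}\Pi_r
=\begin{pmatrix} \diag(a_j) & \diag(b_j) \\ \diag(c_j) & \diag(d_j)\end{pmatrix}.
\]
This is immediate from the definition \eqref{Pirdef}: $\Pi_r$ sends the ordered basis $(e_1,\ldots,e_r,e_{r+1},\ldots,e_{2r})$ to $(e_1,e_3,\ldots,e_{2r-1},e_2,e_4,\ldots,e_{2r})$, so it intertwines the ``group by index $j$'' block structure on the right with the ``group by row/column role (top vs.\ bottom, left vs.\ right)'' block structure on the left.

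Applying this identity to the three jump matrices above, I obtain
\[
\Pi_r^{-1}J\Pi_r=\begin{pmatrix} 0_r & I_r \\ -I_r & 0_r\end{pmatrix}, \quad
\begin{pmatrix} I_r & 0_r \\ e^{\vec\alpha\pi i} & I_r\end{pmatrix}, \quad
\begin{pmatrix} I_r & 0_r \\ e^{-\vec\alpha\pi i} & I_r\end{pmatrix},
\]
on $(1-\delta,1)$, the upper lip, and the lower lip, respectively, which are exactly the right-hand sides in \eqref{P1jump}. Since $P^{(1)}_+=\Pi_r^{-1}\Phi_+\Pi_r=\Pi_r^{-1}\Phi_-J\Pi_r=(\Pi_r^{-1}\Phi_-\Pi_r)(\Pi_r^{-1}J\Pi_r)=P^{(1)}_-(\Pi_r^{-1}J\Pi_r)$, the lemma follows. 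There is really no hard step here; the only thing requiring care is bookkeeping the correspondence between the two block structures encoded by $\Pi_r$, which I would make explicit once as above and then apply uniformly to the three contour pieces.
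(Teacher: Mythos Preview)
Your proposal is correct and follows essentially the same approach as the paper: both proofs reduce the jump computation to the key identity $\Pi_r^{-1}\diag(A_1,\ldots,A_r)\Pi_r = \bigl(\begin{smallmatrix}\diag(\vec a) & \diag(\vec b)\\ \diag(\vec c) & \diag(\vec d)\end{smallmatrix}\bigr)$ and then apply it to the block-diagonal jumps inherited from \eqref{Psijump1}. Your write-up simply makes the application to the three contour pieces more explicit than the paper's one-line ``direct calculation.''
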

\begin{proof}
The permutation matrix $\Pi_r$ has the
following property, which can be readily checked from
\eqref{Pirdef}. Given $2 \times 2$ matrices 
 $A_j = \begin{pmatrix} a_j & b_j \\ c_j & d_j
\end{pmatrix}$ for $j=1, \ldots, r$, one has 
\begin{equation} \label{AjPir}
		\Pi_r^{-1} \diag \left(A_1, A_2, \ldots, A_r\right)
		\Pi_r = \begin{pmatrix} \diag(\vec{a}) & \diag(\vec{b}) \\
			\diag(\vec{c)} & \diag(\vec{d}) \end{pmatrix}
\end{equation}
where $\vec{a} = (a_1, \ldots, a_r)$ and so on. 

The jumps \eqref{P1jump} follow by direct
calculation from the definition \eqref{P1def}, the
jumps \eqref{Psijump1} of $\Psi_{\alpha}$ and the property \eqref{AjPir}. 
\end{proof}

\begin{remark}
The reader may note that the jump conditions in \eqref{P1jump} remain the same
in case one or more of the $\alpha_j$'s is shifted by an even integer. Therefore
a construction of $P^{(1)}$  with such shifted parameters would
also satisfy the jump conditions. Then we could go on and construct $E_n$ as below
($E_n$ does not depend on the parameters $\alpha_j$) and define $P$ by \eqref{Pdef}.
However, we have to use the modified Bessel functions with exact orders
$\alpha_1, \ldots, \alpha_r$ in order to be able to match $P$ with $S$ in
the sense that $S(z) P(z)^{-1}$ should remain bounded near $z=1$.

The matching will be done in section \ref{sec36} below.
There we will use that $V(z)^{-1}$ and $V(\overline{z})^*$ have a certain behavior as $z\to 1$
with exponents $\pm \alpha_1/2, \ldots, \pm \alpha_r/2$,
see \eqref{Vinvat1}.
These exponents agree with the exponents coming from the Bessel parametrix at the origin,
see \eqref{Psiat0}, but only if we use the modified Bessel functions of orders $\alpha_1,\ldots, \alpha_r$.
\end{remark}

\subsubsection{Definition of $E_n$}

With $P^{(1)}$ given by \eqref{P1def},  we let $P$ be
as in \eqref{Pdef} with an analytic prefactor $E_n$
that is still to be determined. Then $P$ will have the correct
jumps from \eqref{Pjump} and we choose $E_n$
in such a way that it also satisfies the matching condition
\eqref{eq:matchingPM} on the circle $|z-1| = \delta$.

The leading
term in the asymptotic behavior of $\Psi_{\alpha}(n^2 f(z))$
as $n \to \infty$ (with fixed $z \in \partial D(1,\delta)$)
is given in formula (6.29) of \cite{KMVV04}. It does not
depend on $\alpha$. Thus for every $j=1, \ldots, r$ we have
\begin{multline*} 
	\Psi_{\alpha_j}(n^2 f(z)) =
\begin{pmatrix} (2\pi n)^{-1/2} f(z)^{-1/4} & 0 \\ 0 & (2\pi n)^{1/2} f(z)^{1/4} \end{pmatrix} \\
   \times 
		\left(\frac{1}{\sqrt{2}} \begin{pmatrix} 1 & i \\ i & 1 \end{pmatrix} + \mathcal{O}(n^{-1}) \right)
		\begin{pmatrix} 
		e^{2n f(z)^{1/2}} & 0 \\ 0 & e^{-2n f(z)^{-1/2}}
		\end{pmatrix}. 
	\end{multline*}
 By \eqref{P1def}, \eqref{AjPir}, and \eqref{fdef}  this leads to
(we use principal branches of the fractional powers)
\begin{multline} \label{P1asymp}
	P^{(1)}(z) 
	= \begin{pmatrix} (2 \pi n)^{-1/2} f(z)^{-1/4} I_r & 0_r \\
		0_r & (2\pi n)^{1/2} f(z)^{1/4} I_r \end{pmatrix} 
	\\ \times \left( \frac{1}{\sqrt{2}}\begin{pmatrix} I_r & i I_r \\
			i I_r & I_r \end{pmatrix}  +  \mathcal{O}(n^{-1})
		\right) 
		 \begin{pmatrix} \varphi(z)^n I_r & 0_r \\ 0_r
		& \varphi(z)^{-n} I_r \end{pmatrix}
\end{multline}
as $n \to \infty$. 

To obtain \eqref{eq:matchingPM}, 
we ignore the $\mathcal{O}(n^{-1})$ term and define $E_n$ in view of 
\eqref{Pdef} and \eqref{P1asymp} by
\begin{multline} \label{Endef}
	E_n(z) =  M(z)  \begin{pmatrix} V(z) & 0_r \\ 0_r
		& V(\overline{z})^{-\ast} \end{pmatrix}
	\frac{1}{\sqrt{2}} \begin{pmatrix} I_r & - i I_r \\ 
		-i I_r & I_r \end{pmatrix} \\
	\times
	\begin{pmatrix} (2 \pi n)^{1/2} f(z)^{1/4} I_r & 0_r \\
		0_r & (2\pi n)^{-1/2} f(z)^{-1/4} I_r \end{pmatrix}.
\end{multline}
Then the matching condition \eqref{eq:matchingPM}
can be readily verified from 
\eqref{Pdef} and \eqref{Endef}.

The definition \eqref{Endef} shows that  $E_n(z)$ is analytic
in $D(1,\delta) \setminus (1-\delta,1]$.  We need that it has
analytic extension to $D(1,\delta)$, and this is what we are going to prove
in the next subsection. Once we have that, we will have
completed the construction of the local parametrix at $1$.

\subsubsection{Analyticity of $E_n$ across $(1-\delta,1)$}

The analyticity of $E_n$ across $(1-\delta,1)$
follows from the following lemma.

\begin{lemma} \label{Enjump}
	We have
	$E_{n,+}(x)= E_{n,-}(x)$ for $x \in (1-\delta, 1)$.
\end{lemma}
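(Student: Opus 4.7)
The natural approach is to write $E_n$ as a product of four factors and show the jumps of these factors cancel on $(1-\delta,1)$. Specifically, set
\[
E_n(z)=M(z)\,\mathcal{V}(z)\,R\,\mathcal{F}_n(z),
\quad
\mathcal{V}(z)=\begin{pmatrix} V(z) & 0_r \\ 0_r & V(\overline{z})^{-\ast}\end{pmatrix},
\quad
R=\tfrac{1}{\sqrt{2}}\begin{pmatrix} I_r & -iI_r \\ -iI_r & I_r \end{pmatrix},
\]
with $\mathcal{F}_n(z)=\diag\bigl((2\pi n)^{1/2}f(z)^{1/4}I_r,\ (2\pi n)^{-1/2}f(z)^{-1/4}I_r\bigr)$. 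The constant factor $R$ has no jump, so the statement reduces to showing that the jumps of $M$, $\mathcal{V}$, and $\mathcal{F}_n$ conspire to give $E_{n,-}^{-1}E_{n,+}=I_{2r}$.

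First I would read off the three individual jumps. The jump of $M$ is $M_-^{-1}M_+=\left(\begin{smallmatrix}0_r & W \\ -W^{-1}& 0_r\end{smallmatrix}\right)$ from \eqref{Mjump}. For $\mathcal{V}$ the key observation is that the map $z\mapsto\overline z$ swaps upper and lower boundary values, so on $(1-\delta,1)$ one has $V(\overline z)^{-\ast}|_{+}=V_-(x)^{-\ast}$ and $V(\overline z)^{-\ast}|_{-}=V_+(x)^{-\ast}$; hence $\mathcal{V}_+=\diag(V_+,V_-^{-\ast})$ and $\mathcal{V}_-=\diag(V_-,V_+^{-\ast})$. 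For $\mathcal{F}_n$ one uses that $f$ is analytic on $D(1,\delta)$ with $f(x)<0$ on $(1-\delta,1)$ and, following the upper half-plane into the cut of the principal branch of $w^{1/4}$, that $f_+^{1/4}/f_-^{1/4}=i$.

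Next I would combine. A direct block computation
\[
\mathcal{V}_-^{-1}\!\left(M_-^{-1}M_+\right)\!\mathcal{V}_+
=
\begin{pmatrix} 0_r & V_-^{-1}WV_-^{-\ast} \\ -V_+^{\ast}W^{-1}V_+ & 0_r \end{pmatrix}
\]
collapses to $J:=\left(\begin{smallmatrix}0_r & I_r \\ -I_r & 0_r\end{smallmatrix}\right)$ thanks to the defining factorization $W(x)=V_\pm(x)V_\pm(x)^\ast$ of Lemma \ref{lemma34} (the $V_-$ factor uses the $-$ boundary version and the $V_+$ factor uses the $+$ version, which is exactly what is available). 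It remains to compute $R^{-1}JR$, which a short $2\times 2$ calculation shows equals $\diag(-iI_r,iI_r)$. Since $\mathcal{F}_{n,\pm}$ are diagonal, one then gets
\[
\mathcal{F}_{n,-}^{-1}R^{-1}JR\,\mathcal{F}_{n,+}=\diag\!\left(-i\tfrac{f_+^{1/4}}{f_-^{1/4}}I_r,\ i\tfrac{f_-^{1/4}}{f_+^{1/4}}I_r\right)=I_{2r},
\]
using $f_+^{1/4}/f_-^{1/4}=i$. Multiplying everything yields $E_{n,-}^{-1}E_{n,+}=I_{2r}$, proving the lemma.

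The main obstacle is the bookkeeping around the factor $V(\overline z)^{-\ast}$: one has to be careful that complex conjugation of the argument interchanges boundary values while Hermitian conjugation does not introduce extra phases along the real interval. Once this reflection is correctly accounted for, the factorization $W=V_\pm V_\pm^\ast$ immediately forces the cancellation, and the only remaining piece is the scalar phase $i$ coming from $f^{1/4}$, which is exactly compensated by the $2\times 2$ conjugation by $R$.
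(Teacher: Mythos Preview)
Your proof is correct and follows essentially the same route as the paper's: both compute $E_{n,-}^{-1}E_{n,+}$ by first reducing the $M$-jump conjugated by the $V$-factors to $\left(\begin{smallmatrix}0_r & I_r\\ -I_r & 0_r\end{smallmatrix}\right)$ via the factorization $W=V_\pm V_\pm^\ast$, then observing that conjugation by $R$ turns this into $\diag(-iI_r,iI_r)$, which is cancelled by the phase $f_+^{1/4}/f_-^{1/4}=i$. Your upfront decomposition $E_n=M\mathcal{V}R\mathcal{F}_n$ and the explicit remark that $z\mapsto\overline z$ swaps the $\pm$ boundary values of $V(\overline z)^{-\ast}$ make the bookkeeping slightly more transparent than in the paper, but the argument is the same.
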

\begin{proof} Let $x \in (1-\delta,1)$. 
	We first note that by \eqref{Mjump} and \eqref{WVfactor}
	\begin{multline*}
		\begin{pmatrix} V_-(x)^{-1} & 0_r \\ 0_r
			& V_+(x)^\ast \end{pmatrix}
		M_-^{-1}(x)  	  M_+(x)
		\begin{pmatrix} V_+(x) & 0_r \\ 0_r & V_-(x)^{-\ast} \end{pmatrix} \\
		= \begin{pmatrix} V_-(x)^{-1} & 0_r \\ 0_r
			& V_+(x)^{\ast} \end{pmatrix}
		\begin{pmatrix} 0_r & W(x) \\ - W(x)^{-1} & 0_r \end{pmatrix}
		\begin{pmatrix} V_+(x) & 0_r \\ 0_r & V_-(x)^{-\ast} \end{pmatrix}
		\\
		=
		\begin{pmatrix} 0_r & V_-(x)^{-1} W(x) V_-(x)^{-\ast} \\ -V_+(x)^\ast W(x)^{-1} V_+(x) & 0_r \end{pmatrix} 
		= \begin{pmatrix} 0_r & I_r \\ -I_r & 0_r \end{pmatrix}.
	\end{multline*}
	 This gives by \eqref{Endef}
	\begin{multline*}
		E_{n,-}^{-1}(x) E_{n,+}(x) =
		\begin{pmatrix} (2 \pi n)^{-1/2} f_-(x)^{-1/4} I_r & 0_r \\
			0_r & (2\pi n)^{1/2} f_-(x)^{1/4} I_r \end{pmatrix} \\
		\times \frac{1}{\sqrt{2}} \begin{pmatrix} I_r & i I_r \\ 
			i I_r & I_r \end{pmatrix}
		\begin{pmatrix} 0_r & I_r \\ -I_r & 0_r \end{pmatrix}
				\frac{1}{\sqrt{2}} \begin{pmatrix} I_r & -i I_r \\ 
			-i I_r & I_r \end{pmatrix} \\
		\times
			\begin{pmatrix} (2 \pi n)^{1/2} f_+(x)^{1/4} I_r & 0_r \\
			0_r & (2\pi n)^{-1/2} f_+(x)^{-1/4} I_r \end{pmatrix}.
	\end{multline*}
	The product of the three matrices in the middle line
	is equal to $\begin{pmatrix} -i I_r & 0_r \\ 0_r &  i I_r \end{pmatrix}$, and we get
	\[ E_{n,-}^{-1}(x)  E_{n,+}(x)
	=  \begin{pmatrix} -i f_-^{-1/4}(x) f_+^{1/4}(x) I_r & 0_r \\
		0_r & i f_-^{1/4}(x) f_+^{-1/4}(x) I_r \end{pmatrix}
	= I_{2r}.  \]
	The final identity holds since $f(x) < 0$ for $x\in(1-\delta,1)$ and due to the
	choice of principal branch of the fourth root, one has
	$f_+^{1/4}(x) = i f_-^{1/4}(x)$ for  $x \in (1-\delta, 1)$.
\end{proof}

From \eqref{Enjump} it follows that $E_n$ has analytic continuation across $(1-\delta, 1)$ by Morera's theorem. Thus $E_n$ is analytic in the punctured disk
$D(1,\delta) \setminus \{1\}$. 

\subsubsection{Removable singularity} \label{subsec357}
It remains to show
that the isolated singularity at $1$ is removable.

\begin{lemma} \label{Enremove}
	The isolated singularity of $E_n$ at $1$ is removable.
\end{lemma}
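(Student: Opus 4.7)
The plan is to invoke Riemann's removable singularity theorem. Since Lemma~\ref{Enjump} already establishes that $E_n$ is analytic on the punctured disk $D(1,\delta)\setminus\{1\}$, it suffices to verify that $(z-1)E_n(z)\to 0_{2r}$ as $z\to 1$; I will in fact establish the stronger pointwise bound $\|E_n(z)\|=O(|z-1|^{-1/2})$ entrywise.

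The first step is to substitute the explicit form \eqref{Mdef} of $M$ into the definition \eqref{Endef} of $E_n$. This produces the block $D(z)^{-1}V(z)$ in the upper-left and $D(\bar z)^{\ast}V(\bar z)^{-\ast}$ in the lower-right of the central diagonal factor. By Lemma~\ref{lemma17} the first of these tends to $U_1$ as $z\to 1$; the second, which equals $(D(\bar z)^{-1}V(\bar z))^{-\ast}$, tends to $U_1^{-\ast}=U_1$ because $U_1$ is unitary. Both are therefore bounded in a neighborhood of $1$. Crucially, this substitution has absorbed the potentially large $(z-1)^{\alpha_j/2}$-type boundary behaviors of $D^{-1}$ and $V$ (and of $D$ and $V^{-\ast}$) into these bounded matrices. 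What remains of the $z$-dependence of $E_n(z)$ is carried entirely by the scalar functions $\gamma(z)^{\pm 1}$ inside $M_0$ and $f(z)^{\pm 1/4}$ in the right-most diagonal factor.

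The second step is a routine order-counting. From \eqref{M0def} every entry of $M_0(z)$ is $O(|\gamma(z)|^{-1})=O(|z-1|^{-1/4})$, while \eqref{fdef} gives $f(z)^{\pm 1/4}=O(|z-1|^{\pm 1/4})$. The constant outer factors $\diag(D(\infty),D(\infty)^{-\ast})$ and the bounded central blocks contribute $O(1)$, so each entry of $E_n(z)$ picks up $O(|z-1|^{-1/4})$ from $M_0$ together with $O(|z-1|^{\pm 1/4})$ from $f$, depending on which column it sits in. The worst case $|z-1|^{-1/4}\cdot|z-1|^{-1/4}=|z-1|^{-1/2}$ occurs only in the $r$ columns that carry the $f^{-1/4}$-factor. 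This yields $\|E_n(z)\|=O(|z-1|^{-1/2})$, hence $(z-1)E_n(z)\to 0_{2r}$, and Riemann's theorem applied entrywise then produces the analytic extension across $z=1$.

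The step I expect to be most delicate is the structural reorganization in the first paragraph: one must recognize that the $D(z)^{-1}$ factor contained in $M(z)$ combines with the external $V(z)$ appearing in \eqref{Endef} precisely into the bounded quantity $D(z)^{-1}V(z)$ governed by Lemma~\ref{lemma17}, and analogously in the lower-right block, so that what a priori might have been matrix-valued singularities of order $|z-1|^{-\alpha_j/2}$ collapse to the mild scalar singularity $|z-1|^{-1/4}$ coming from $\gamma^{-1}$. Once this observation is made, the remaining argument is the routine counting of powers of $\gamma$ and $f^{1/4}$.
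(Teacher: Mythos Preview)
Your argument invokes Lemma~\ref{lemma17} to assert that $D(z)^{-1}V(z)$ has a limit (hence is bounded) as $z\to 1$, but in the paper's logical structure Lemma~\ref{lemma17} is proved \emph{after} Lemma~\ref{Enremove}, and its proof (in Section~\ref{subsec358}) explicitly relies on the analysis carried out inside the proof of Lemma~\ref{Enremove}. Concretely, the paper first shows that the single-valued combinations $\Omega_1(z)=D(z)^{-1}V(z)+D(\bar z)^\ast V(\bar z)^{-\ast}$ and $\Omega_2(z)=(z-1)^{-1/2}\bigl(D(z)^{-1}V(z)-D(\bar z)^\ast V(\bar z)^{-\ast}\bigr)$ have removable singularities at $z=1$ as the core of the proof of Lemma~\ref{Enremove}, and only then deduces $D(z)^{-1}V(z)=\tfrac12\bigl(\Omega_1(z)+(z-1)^{1/2}\Omega_2(z)\bigr)\to\tfrac12\Omega_1(1)=U_1$. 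So your appeal to Lemma~\ref{lemma17} is circular.

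The boundedness of $D(z)^{-1}V(z)$ near $z=1$ is not available a priori and is precisely the non-trivial content. Crude estimates give only $D^{-1}(z)=O\bigl(|z-1|^{-\alpha_{\max}/2}\bigr)$ and $V(z)=O\bigl(|z-1|^{\alpha_{\min}/2}\bigr)$, hence $D^{-1}V=O\bigl(|z-1|^{(\alpha_{\min}-\alpha_{\max})/2}\bigr)$, which blows up whenever the eigenvalues of $H$ vanish to different orders at $1$. The cancellation that tames this comes from the fact that $D$ and $V$ factor the \emph{same} weight $W$; the paper extracts it by computing the boundary values $D_\pm^{-1}(x)V_\pm(x)$ explicitly via~\eqref{DVprod}, showing they are unitary on $(-1,1)$, and then excluding a pole for the single-valued functions $\Omega_1,\Omega_2$. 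You have correctly located the structural point---that the product $D^{-1}V$ governs the behavior of $E_n$---but establishing its boundedness is exactly the work that has to be done here, not something you can import.
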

\begin{proof}
	From \eqref{Endef} it is clear that the $n$-dependence
	of $E_n$ is only in the last factor in the
	right hand side of \eqref{Endef}, and we have
	in view of \eqref{M0def}, \eqref{Mdef} and \eqref{Endef}
\begin{multline} \label{Ensum} 
	E_n(z) = \frac{1}{2} \begin{pmatrix} D(\infty) & 0_r \\
		0_r & D(\infty)^{-\ast} \end{pmatrix}
		\begin{pmatrix} I_r  & i I_r \\
			i I_r & I_r \end{pmatrix} \\
		\times \left[	
	 (\pi n)^{1/2} E^{(1)}(z) 
	 \begin{pmatrix} I_r & 0_r \end{pmatrix}
  + \frac{1}{2} (\pi n)^{-1/2} E^{(2)}(z) \begin{pmatrix} 0_r & I_r
  	\end{pmatrix}
 \right] \end{multline}
with $E^{(j)}(z)$, $j=1,2$, of size $2r \times r$
and independent of $n$, namely 
\begin{align}  E^{(1)}(z) & =  \nonumber
	\begin{pmatrix} \gamma(z) I_r & 0_r \\
		0_r & \gamma(z)^{-1} I_r \end{pmatrix}
	\begin{pmatrix} I_r & - i I_r \\
		-i I_r & I_r \end{pmatrix} 
	\begin{pmatrix} D(z)^{-1} V(z) & 0_r \\
		0_r & D(\overline{z})^\ast V(\overline{z})^{-\ast} \end{pmatrix} \\
	& \qquad \times \nonumber
	\begin{pmatrix} I_r & - i I_r \\
	-i I_r & I_r \end{pmatrix} 
\begin{pmatrix} f(z)^{1/4} I_r \\
0_r  \end{pmatrix} \\
& =	\label{E1z}
	\begin{pmatrix} \gamma(z) f(z)^{1/4} (D(z)^{-1} V(z) - D(\overline{z})^\ast V(\overline{z})^{-\ast})   \\
	\gamma(z)^{-1} f(z)^{1/4}	\left(-i D(z)^{-1} V(z) - i D(\overline{z})^\ast V(\overline{z})^{-\ast} \right)
		    \end{pmatrix}, \end{align}
and similarly 
\begin{align} 
	E^{(2)}(z) & =  \label{E2z}
	\begin{pmatrix} \gamma(z) f(z)^{-1/4} (-i D(z)^{-1} V(z) - i D(\overline{z})^\ast V(\overline{z})^{-\ast})  \\  
		\gamma(z)^{-1} f(z)^{-1/4} \left(- D(z)^{-1} V(z) + D(\overline{z})^\ast V(\overline{z})^{-\ast} \right)
		   \end{pmatrix}   
\end{align}
Both $E^{(1)}$ and $E^{(2)}$ are analytic in the punctured disk $D(1,\delta) \setminus \{1\}$.
Since both $\gamma(z)$ and $f(z)^{1/4}$ behave
like $\approx (z-1)^{1/4}$ as $z \to 1$ (see their definitions
in \eqref{M0def} and \eqref{fdef}), we conclude that
both
\begin{equation} \label{Omega1def} 
	\Omega_1(z) :=	D(z)^{-1} V(z) + D(\overline{z})^\ast V(\overline{z})^{-\ast}, \end{equation}
and
\begin{equation} \label{Omega2def}
	\Omega_2(z) := 
	(z-1)^{-1/2} \left( D(z)^{-1} V(z) - D(\overline{z})^\ast V(\overline{z})^{-\ast} \right) \end{equation}
are analytic in $D(1,\delta) \setminus \{1\}$.
It suffices to prove that both $\Omega_1$
and $\Omega_2$ have  removable  singularities at $z=1$.

\medskip
We first show that there cannot be an essential
singularity. From the definition \eqref{Vdef} it is
clear that
\begin{align} \label{Vat1}
	V(z) = \mathcal{O}\left((z-1)^{\alpha_{\min}/2}\right), \quad
	V^{-1}(z) = \mathcal{O}\left((z-1)^{-\alpha_{\max}/2}\right). \end{align}
where $\alpha_{\min} = \min(\alpha_1, \ldots, \alpha_r)$
and $\alpha_{\max} = \max(\alpha_1, \ldots, \alpha_r)$. 
The same estimates 
\begin{align} \label{Dat1}
	D(z) = \mathcal{O}\left((z-1)^{\alpha_{\min}/2}\right), \quad
	D^{-1}(z) = \mathcal{O}\left((z-1)^{-\alpha_{\max}/2}\right), \end{align}
hold for $D(z)$ and $D^{-1}(z)$.
To see this we argue that the Jacobi
prefactor $(1-x)^{\alpha} (1+x)^{\beta}$ has the scalar Szeg\H{o} function
$\frac{(z-1)^{\alpha/2} (z+1)^{\beta/2}}{\varphi(z)^{(\alpha + \beta)/2}} $	and
\[ D(z) = \frac{(z-1)^{\alpha/2} (z+1)^{\beta/2}}{\varphi(z)^{(\alpha + \beta)/2}}
	D_H(z), \]
where $D_H(z)$ is the matrix Szeg\H{o} function for $H$.
Since $H$ is bounded and analytic at $z=1$, also 
 $D_H$ is bounded at $z=1$. It thus follows that $D(z) = \mathcal{O}\left((z-1)^{\alpha/2}\right)$ as $z \to 1$, which is
the first statement of \eqref{Dat1}, since $\alpha_{\min} = \alpha$.
The second statement follows in a similar fashion
since $z \mapsto D(\overline{z})^{-\ast}$ is the matrix Szeg\H{o} function for
$W^{-1}$ and the eigenvalues of $W^{-1}$ 
have exponents $-\alpha_1,  \ldots, - \alpha_r$ at $z=1$.
From \eqref{Vat1} and \eqref{Dat1} we see that both
\eqref{Omega1def} and \eqref{Omega2def} have the behavior
$\mathcal{O}((z-1)^{-p})$ as $z \to 1$ for some $p \geq 0$,
which implies that the isolated singularity at $z=1$
cannot be an essential singularity. It can be at most
a pole of order $\leq p$.

\medskip
To exclude the possibility of a pole we consider $x \in (-1,1)$.
From \eqref{Wdef}, \eqref{Dpolar} and \eqref{HQLambda} 
 we have
\begin{align*}
		D_+^{-1}(x) & = (1-x)^{-\alpha/2} (1+x)^{-\beta/2}
		U(x) \left(Q(x) \Lambda(x) Q(x)^\ast \right)^{-1/2}  \\
		& = (1-x)^{-\alpha/2} (1+x)^{-\beta/2} U(x) Q(x) \Lambda(x)^{-1/2} Q(x)^\ast,
	\end{align*}
where $U(x)$ and $Q(x)$ are unitary.
Then by \eqref{Vdef}, \eqref{lambdajdef} and \eqref{Lambdadef}
\begin{align} \nonumber D_+^{-1}(x) V_+(x) & = e^{\alpha\pi i /2} 
		U(x) Q(x) \Lambda(x)^{-1/2} \widetilde{\Lambda}_+(x)^{1/2} \\
		& = U(x) Q(x) \diag \left(e^{\alpha_1\pi i/2},
			\ldots, e^{\alpha_r\pi i/2} \right),
			\quad -1 < x < 1, \label{DVprod}
\end{align}
where we also used \eqref{alphajdef}.

The three factors on the right-hand side of \eqref{DVprod} are unitary matrices that remain bounded as $x \to 1-$. 
Thus \eqref{DVprod} remains bounded as $x \to 1-$.
The same reasoning applies to $D_-^{-1}(x) V_-(x)$
and to their Hermitian transposes.
Because of \eqref{Omega1def} we then have that $\Omega_1(x)$ 
remains bounded
as $x \to 1-$, while by \eqref{Omega2def} we have that $\Omega_2(x) =
\mathcal{O}\left((x-1)^{-1/2}\right)$ as $x \to 1-$, and both behaviors
exclude the possibility of a pole at $1$.
Thus $\Omega_1$ and $\Omega_2$ have removable singularities
at $1$, and this completes the proof.
\end{proof}

\subsubsection{Proof of Lemma \ref{lemma17}}  \label{subsec358}
From the proof of Lemma \ref{Enremove} we also
obtain the existence of the limit defining $U_1$
as claimed in Lemma \ref{lemma17}.

\begin{proof}[Proof of Lemma \ref{lemma17}.]
In the proof of Lemma \ref{Enremove} we established that
$\Omega_1$ and $\Omega_2$ are analytic in a neighborhood of $1$, where
$\Omega_1$ and $\Omega_2$ are defined  by \eqref{Omega1def}
and \eqref{Omega2def}. From these definitions we see
that
\[ D(z)^{-1} V(z) = \frac{1}{2} \left(
	\Omega_1(z) + (z-1)^{1/2} \Omega_2(z)\right), \]
and therefore the limit defining $U_1$ in \eqref{U1def} exists
and $U_1 = \frac{1}{2} \Omega_1(1)$.
For $z = x \in (1-\delta,1)$ we have by \eqref{DVprod}
that $D_+^{-1}(x) V_+(x)$ is unitary, and the unitarity
is preserved in the limit $x \to 1-$. Thus $U_1$ is unitary.

The statements for $U_{-1}$ in Lemma \ref{lemma17} follow
similarly. 
\end{proof}

\subsection{$S(z) P(z)^{-1}$ remains bounded near $z=1$} \label{sec36}
We finally check the last item in the RH problem for $P$.

\begin{lemma}
	$S(z) P(z)^{-1}$ remains bounded
	near $z=1$.
\end{lemma}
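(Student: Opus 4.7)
The plan is as follows. By construction in \eqref{Pdef}–\eqref{P1def}, the local parametrix $P$ satisfies the same jump relations \eqref{Pjump} as $S$ on $\Sigma_S \cap D(1,\delta)$. Consequently, $SP^{-1}$ is single-valued and analytic on $D(1,\delta)\setminus\{1\}$, and the lemma reduces to showing that the isolated singularity at $z=1$ is removable. Since $SP^{-1}$ is a matrix-valued analytic function in a punctured disk, it suffices (by Riemann's removable singularity theorem) to prove that $SP^{-1}(z) = o((z-1)^{-1})$ componentwise as $z\to 1$.

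First I would bound $P(z)^{-1}$. From \eqref{Pdef},
\[ P(z)^{-1} = \begin{pmatrix} \varphi(z)^n V(z) & 0_r \\ 0_r & \varphi(z)^{-n} V(\overline z)^{-\ast} \end{pmatrix} P^{(1)}(z)^{-1} E_n(z)^{-1}, \]
where $E_n$ is analytic and invertible near $z=1$ by Lemmas \ref{Enjump} and \ref{Enremove}, and $\varphi(z)^{\pm n}$ is bounded on $D(1,\delta)$. The factor $V$ gives entrywise control $\mathcal O((z-1)^{\alpha_j/2})$ in the $j$-th eigendirection of $Q$, while $P^{(1)}$, via \eqref{P1def} and the standard $\zeta\to 0$ expansions of the modified Bessel and Hankel functions in $\Psi_{\alpha_j}$, is explicitly $\mathcal O(|f(z)|^{\pm\alpha_j/4})$ entrywise (with at most logarithmic corrections when $\alpha_j=0$, handled exactly as $h_\alpha$ in \eqref{hdef}). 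Next I would bound $S(z)$. The endpoint condition \eqref{eq:endpointsY} for $Y$ survives the transformation \eqref{Tdef} with unchanged exponents; outside the lens $S=T$, and inside the lens the extra factor in \eqref{Sdef1} multiplies $S$ by a matrix involving $\varphi^{-2n}W^{-1}$, whose eigenvalues $(1-x)^{-\alpha}\lambda_j(x)^{-1} \sim (1-x)^{-\alpha_j}$ introduce singular directions that are precisely the ones compensated by $V^{-1}$ in $P^{-1}$.

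The combination step is where the block diagonal structure of $P^{(1)}$ via the permutation $\Pi_r$ in \eqref{P1def} is essential: each eigendirection of $H$ (labelled by $j$) is handled by its own scalar Bessel parametrix $\Psi_{\alpha_j}$, reducing the matching of singular exponents to $r$ independent copies of the scalar computation carried out in \cite[\S 6]{KMVV04}. In every such eigendirection, the $(z-1)^{-\alpha_j/2}$ singularity of $W^{-1}$ (coming through $L$ inside the lens) cancels the $(z-1)^{\alpha_j/2}$ factor from $V$ and the Bessel factor in $P^{(1)}$, and the remaining analytic factors $Q$, $E_n$, $\varphi^{\pm n}$ are bounded. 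The upshot is that $SP^{-1}$ is entrywise $\mathcal O(1)$ as $z\to 1$, so its singularity is removable.

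The main obstacle is the careful alignment of singular directions: $S$ is singular in the eigendirections of $H^{-1}$ whereas $P^{-1}$ is singular in the directions dictated by $V^{-1}$ and the $\Psi_{\alpha_j}$ blocks ordered through $\Pi_r$. Fortunately, both originate from the same spectral factorization $H=Q\Lambda Q^\ast$ and the same exponents $\alpha_j = \alpha+n_j$, so the matching is built into the construction; what remains is essentially a bookkeeping exercise on blocks. The subcases $\alpha_j<0$, $\alpha_j=0$, $\alpha_j>0$ (corresponding to the three cases of $h_\alpha$ in \eqref{hdef}) are handled uniformly because the Bessel parametrix $\Psi_{\alpha_j}$ produces exactly the corresponding local behavior.
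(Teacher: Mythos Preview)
Your overall strategy—show that $SP^{-1}$ is analytic on $D(1,\delta)\setminus\{1\}$ and then establish $SP^{-1}=o((z-1)^{-1})$ to invoke the removable singularity theorem—matches the paper. The execution differs, and your version has a gap.

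The paper does not attempt your eigendirection-by-eigendirection cancellation. Instead it (i) notes that $\det P\equiv 1$ (since $\det\Psi_{\alpha_j}\equiv 1$ and $\det E_n\equiv 1$), so bounds on $P$ transfer to $P^{-1}$ via cofactors; (ii) rules out an essential singularity using crude power-law bounds $S,P=\mathcal O((z-1)^{-p})$; and (iii), since only a pole remains possible, works \emph{only in one sector}—outside the lens, where $S=T$ has the simple behaviour \eqref{eq:endpointsY}. Computing $P$ from \eqref{P1at1}–\eqref{Vinvat1} and using $\det P=1$ yields $SP^{-1}=\mathcal O(|z-1|^{\alpha})$ (or logarithmic) there, which is $o((z-1)^{-1})$ because $\alpha>-1$. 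Note that this is not $\mathcal O(1)$, and the paper does not claim $\mathcal O(1)$.

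The gap in your plan is the reduction to $r$ independent scalar problems. It is true that the Bessel blocks $\Psi_{\alpha_j}$ and the $V$-factor in $P^{-1}$ align with the eigendirections of $H$, and your observation that $W^{-1}V$ simplifies inside the lens (indeed $W^{-1}V=V(\bar z)^{-\ast}e^{\vec{\alpha}\pi i}$ by \eqref{WVupper}) is correct. But the remaining factor $T$ (equivalently $Y$) does \emph{not} respect these eigendirections: the endpoint condition \eqref{eq:endpointsY} supplies only the single exponent $\alpha=\min_j\alpha_j$, not separate bounds $h_{\alpha_j}$ in each direction, and $P_n$ has no reason to commute with $Q$. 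Hence the per-direction cancellation you invoke cannot be completed for the $T$-factor, and the claimed $\mathcal O(1)$ bound does not follow from the available information. The paper sidesteps this entirely by the $\det P=1$ trick and by ruling out the essential singularity first so that a single sector suffices.
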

\begin{proof}
We first note that by \cite[Remark 7.1]{KMVV04}
we have $\det \Psi_{\alpha}(\zeta) = 1$ for every $\zeta$
where it is defined. Then by \eqref{P1def} also
$P^{(1)}(z) = 1$ for every $z \in D(1,\delta) \setminus \Sigma_S$.
Since $\det V(z) = \overline{\det V(\overline{z})}$  we then also get that $E_n$ defined by \eqref{Endef}
has determinant $1$ (as also $M$ has determinant $1$). 
Hence by \eqref{Pdef} also 
\begin{equation} \label{detP} 
	\det P(z) = 1, \text{ for }z \in D(1,\delta) \setminus \Sigma_S,
	\end{equation}
and in particular the inverse $P(z)^{-1}$ exists.

Next, because $S$ and $P$ have the same jumps inside $D(1,\delta)$,
the product $S(z) P(z)^{-1}$ has analytic continuation to $D(1,\delta) \setminus \{1\}$ with an isolated singularity at $z=1$.
We have to show that the isolated singularity is removable.

By construction  both $S$ and $P$ can have at most
power like singularities, say $S(z) = \mathcal{O}\left((z-1)^{-p}\right)$
and $P(z) = \mathcal{O}\left((z-1)^{-p}\right)$ as $z \to 0$, for some $p \geq 0$. Then also $ P^{-1}(z) = \mathcal{O}\left((z-1)^{-p}\right)$
since $\det P = 1$, and $SP^{-1}(z) = \mathcal{O}\left((z-1)^{-2p} \right)$
which implies that $S P^{-1}$ does not have an essential
singularity at $1$. 

The behavior of $\Psi_{\alpha}$ near $0$ is given by 
formulas (6.19)--(6.21) in \cite{KMVV04}. For $\alpha \neq 0$,
we have
\begin{equation} \label{Psiat0}
	\Psi_{\alpha}(\zeta) = \begin{pmatrix}
	\mathcal{O}( |\zeta|^{\alpha/2}) & \mathcal{O}(|\zeta|^{-|\alpha|/2}) \\
	\mathcal{O}( |\zeta|^{\alpha/2}) & \mathcal{O}(|\zeta|^{-|\alpha|/2}) 
	\end{pmatrix} \end{equation}
as $\zeta \to 0$ with $|\arg \zeta| < 2 \pi /3$.
Then from \eqref{P1def} and \eqref{AjPir} we get that,
\begin{multline} P^{(1)}(z) = \\ \begin{pmatrix}
	\diag\left( \mathcal{O}(|z-1|^{\alpha_1/2}), \ldots, \mathcal{O}(|z-1|^{\alpha_r/2})    \right) &  \diag\left( \mathcal{O}(|z-1|^{-|\alpha_1|/2}), \ldots,  \mathcal{O}(|z-1|^{-|\alpha_r|/2})   \right) \\
	\diag\left( \mathcal{O}(|z-1|^{\alpha_1/2}), \ldots, 
	 \mathcal{O}(|z-1|^{\alpha_r/2})   \right) &  \diag\left( \mathcal{O}(|z-1|^{-|\alpha_1|/2}), \ldots,  \mathcal{O}(|z-1|^{-|\alpha_r|/2}) 
  \right) 
\end{pmatrix} \label{P1at1}
	\end{multline}
as $z \to 1$ outside the lens.  From \eqref{Vdef} we have
\begin{equation} \label{Vinvat1}
\begin{aligned} 
		V(z)^{-1} & =
		\diag \left( \mathcal{O}(|z-1|^{-\alpha_1/2}), \ldots,  \mathcal{O}(|z-1|^{-\alpha_r/2}) \right) Q(z)^{-1}, \\
		V(\overline{z})^{\ast} & =
		\diag \left( \mathcal{O}(|z-1|^{\alpha_1/2}), \ldots,  \mathcal{O}(|z-1|^{\alpha_r/2}) \right) Q(\overline{z})^{\ast}.
		\end{aligned}
\end{equation}
We use \eqref{P1at1} and \eqref{Vinvat1} in \eqref{Pdef}
and we note that $E_n(z)$ and $\varphi(z)^{\pm}$
 remain bounded as $z \to 1$. 
Then the definition \eqref{Pdef} of $P(z)$ tells us that
\[ P(z) = \begin{cases}
		\begin{pmatrix} \mathcal{O}(1) & \mathcal{O}(|z-1|^{\alpha}) \\
		\mathcal{O}(1) & \mathcal{O}(|z-1|^{\alpha}) \end{pmatrix},  
		& \text{ if }  -1 < \alpha < 0, \\
		\begin{pmatrix} \mathcal{O}(1) & \mathcal{O}(1) \\
			\mathcal{O}(1) & \mathcal{O}(1) \end{pmatrix},  & \text{ if } \alpha > 0, 
\end{cases}
 \]
 as $z \to 1$ from outside the lens, where we recall
 that $\alpha = \min(\alpha_1,\ldots, \alpha_r)$, and
 all $\alpha_j - \alpha$ are non-negative integers.
For $\alpha = 0$, logarithmic terms appear in \eqref{Psiat0},
and then the above reasoning leads to
\[ P(z) = \begin{pmatrix} \mathcal{O}(\log |z-1|) & \mathcal{O}(\log |z-1|) \\
	\mathcal{O}(\log|z-1|) & \mathcal{O}(\log|z-1|) \end{pmatrix},
	\quad \text{ if } \alpha = 0. \]
as $z \to 1$ from outside the lens.
Because of \eqref{detP} we obtain from the above that
\begin{equation} \label{Pinvat1}
	P^{-1}(z) =
	\begin{cases}
		\begin{pmatrix} \mathcal{O}(|z-1|^{\alpha}) &
			\mathcal{O}(|z-1|^{\alpha}) \\
		\mathcal{O}(1) & \mathcal{O}(1) \end{pmatrix},
	& \text{ if } -1 < \alpha < 0, \\
	\begin{pmatrix} \mathcal{O}(\log |z-1|) & \mathcal{O}(\log |z-1|) \\
		\mathcal{O}(\log|z-1|) & \mathcal{O}(\log|z-1|) \end{pmatrix}, 
	& \text{ if } \alpha = 0, \\
	\begin{pmatrix} \mathcal{O}(1) &
			\mathcal{O}(1) \\
			\mathcal{O}(1) & \mathcal{O}(1) \end{pmatrix},
	& \text{ if } \alpha > 0, 
	\end{cases}
\end{equation} 
as $z \to 1$ from outside the lens.

From item 4.\ in the RH problem for $S$ we get that
$S(z)$ behaves in the same way as $Y(z)$ when $z \to 1$
from outside the lens. That is,
\begin{equation} \label{Sat1} S(z) = \begin{pmatrix} \mathcal{O}(1) & \mathcal{O}(h_{\alpha}(z)) \\
	\mathcal{O}(1) & \mathcal{O}(h_{\alpha}(z)) \end{pmatrix},
\end{equation}   with $h_{\alpha}$ as in \eqref{hdef}.
Then by \eqref{Pinvat1} and \eqref{Sat1}
it follows that 
\[ S(z) P^{-1}(z) = 	\begin{pmatrix} \mathcal{O}(|z-1|^{\alpha}) &
	\mathcal{O}(|z-1|^{\alpha}) \\
	\mathcal{O}(|z-1|^{\alpha}) & 
	\mathcal{O}(|z-1|^{\alpha}) \end{pmatrix}, \quad
	 \text{ if } -1 < \alpha < 0, \]
as $z \to 1$ from outside the lens. This behavior
shows that $S(z) P^{-1}(z)$ cannot have a pole 
at $z=1$, since  $\alpha > -1$. If $\alpha > 0$
then \eqref{Pinvat1} and \eqref{Sat1} give us that
$SP^{-1} = \mathcal{O}(1)$ and again there is no pole.
If $\alpha = 0$ then $SP^{-1}$ has a potential
logarithmic behavior, but again it is not enough
for a pole.

We already excluded the possibility of an essential singulariy
and thus $SP^{-1}$ has a removable singularity at $z=1$.
The lemma follows.
\end{proof}

\subsection{Local parametrix around $z=-1$}
The local parametrix $\widetilde{P}$ around $-1$ is constructd
in a similar way. It satisfies the following RH problem:
\begin{enumerate}
    \item $\widetilde{P}(z)$ is analytic for $z\in D(-1,\delta)\setminus \Sigma_S$.
    \item For $z\in D(-1,\delta)\cap \Sigma_S$, the matrix $\widetilde{P}(z)$ should have the same jumps as $S(z)$ in this disk, see also Figure \ref{fig:Pt}:
   \begin{equation} \label{Ptjump}
	\widetilde{P}_+ = \widetilde{P}_- \times
	\begin{cases} \begin{pmatrix} 0_r & W \\ - W^{-1} & 0_r \end{pmatrix} &
		\text{ on } (1-\delta,1), \\
		\begin{pmatrix}	I_r & 0_r \\ \varphi^{-2n} W^{-1} & I_r \end{pmatrix}
		& \begin{array}{l} \text{on the lips of the} \\ \text{lens inside the disk}.
		\end{array}
	\end{cases} \end{equation}	
	\item As $n\to\infty$, uniformly for $z\in\partial D(-1,\delta)\setminus\Sigma_S$, we have the matching condition \begin{equation}\label{eq:matchingPtM}
	    \widetilde{P}(z)M^{-1}(z)=I_{2r}+\mathcal{O}(n^{-1}).
	\end{equation}
	\item $S(z) \left(\widetilde{P}(z)\right)^{-1}$ remains bounded as $z \to -1$.
\end{enumerate}

%same jumps as $S$ in this disk, see also Figure \ref{fig:Pt}.

\begin{figure}[t]
	\centering
	\includegraphics[scale=1.2]{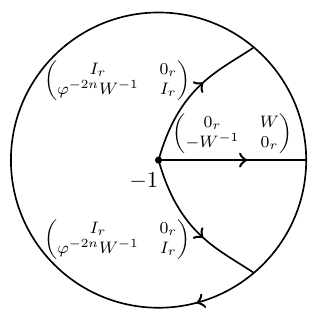}
	\caption{Contours and jumps in the RH problem for $\widetilde{P}(z)$.}
	\label{fig:Pt}
\end{figure}

The local parametrix takes the form 
\begin{equation} \label{Ptdef} 
\widetilde{P}(z) = \widetilde{E}_n(z) \widetilde{P}^{(1)}(z) \begin{pmatrix} \widetilde{\varphi}(z)^{-n} \widetilde{V}^{-1}(z) & 0_r \\
		0_r & \widetilde{\varphi}(z)^n \widetilde{V}(\overline{z})^{\ast} \end{pmatrix} 
\end{equation}
which is similar to \eqref{Pdef}. All quantities with a tilde are
slight modifications of their non-tilded counterparts. 
We use $\widetilde{\varphi}(z) = \varphi(-z)$, and we note that changing $\varphi(z)\rightarrow\widetilde{\varphi}(z)$ does not alter the jumps for $\widetilde{P}(z)$ in \eqref{Ptjump}.
Let $m_j$ be the order of vanishing of $\lambda_j$ at $-1$, and put
\[ \beta_j = \beta + m_j. \]
Then, with appropriate branches of the square roots,
\[ \widetilde{V}(z) =
	(1-z)^{\alpha/2} (-1-z)^{\beta/2}  Q(z) 
	\diag \left((-1)^{m_1} \lambda_1(z), \ldots, (-1)^{m_r} \lambda_r(z) \right)^{1/2} \]
for $z \in D(-1,\delta) \setminus [-1,-1+\delta]$.
$\widetilde{P}^{(1)}$ is built out of the $2\times 2$ Bessel parametrix \eqref{Psialphadef}, but now with parameters $\beta_1, \ldots, \beta_r$, 
namely, similar to \eqref{P1def},
\[
\widetilde{P}^{(1)}(z)
=
\Pi_r^{-1}
\diag
\left(\sigma_3\Psi_{\beta_1}(n^2\tilde{f}(z))\sigma_3,\ldots,\sigma_3\Psi_{\beta_r}(n^2\tilde{f}(z))\sigma_3\right)
\Pi_r,
\]
with $\sigma_3 = \begin{pmatrix} 1 & 0 \\ 0 & -1 \end{pmatrix}$
and $\widetilde{f}(z) = \frac{1}{4} \left( \log \widetilde{\varphi}(z)\right)^2$.
The analytic prefactor takes the form
\begin{multline} \label{tildeEndef}
	\widetilde{E}_n(z) =  
	M(z)  
	\begin{pmatrix} \widetilde{V}(z) & 0_r \\ 0_r
		& \widetilde{V}(\overline{z})^{-\ast} \end{pmatrix}
	\frac{1}{\sqrt{2}} \begin{pmatrix} I_r &  i I_r \\ 
		i I_r & I_r \end{pmatrix} \\
	\times
	\begin{pmatrix} (2 \pi n)^{1/2} \tilde{f}(z)^{1/4} I_r & 0_r \\
		0_r & (2\pi n)^{-1/2} \tilde{f}(z)^{-1/4} I_r \end{pmatrix}
\end{multline}
which is analogous to \eqref{Endef}.
The items in the RH problem for $\widetilde{P}$
then follow in the same way as we proved them for $P$. We do not give
any more details.

\subsection{Final transformation}

The final transformation $S \mapsto R$ is
\begin{equation} \label{Rdef} 
	R(z) = \begin{cases} S(z)M^{-1}(z),  & z \in \mathbb C \setminus \left(\overline{D(1,\delta)}  \cup \overline{D(-1,\delta)} \cup \Sigma_S \right),  \\
	S(z) P^{-1}(z), & z\in D(1,\delta) \setminus \Sigma_S,\\
	S(z) \widetilde{P}^{-1}(z), & z\in D(-1,\delta) \setminus \Sigma_S.
	%\text{ near endpoints}. 
	\end{cases} \end{equation}
Then $R$ is defined and analytic in $\mathbb C \setminus \left( \Sigma_S \cup \partial D(1,\delta) \cup \partial D(-1,\delta) \right)$
with analytic continuation across $(-1,1)$ and on the parts of $\Sigma_S$ 
inside the disks. This follows immediately from the
fact that the jumps of $M$ and $S$ agree on $(-1,1)$,
the jumps of $P^{(1)}$ and $S$ agree on $\Sigma_S \cap D(1,\delta)$, and the jumps of $\widetilde{P}^{(1)}$ and $S$ agree
on $\Sigma_S \cap D(-1,\delta)$. 
The isolated singularities at $\pm 1$ are removable,
since $R$ remains bounded near the endpoints, as follows
from item 4.\ in the RH problems for $P$ and $\widetilde{P}$.
Therefore $R(z)$ satisfies the following RH problem
on the oriented contour $\Sigma_R$ shown in Figure \ref{fig:R}:
\begin{enumerate}
    \item $R(z)$  is analytic in $\mathbb{C}\setminus \Sigma_R$. 
    \item For $z\in\Sigma_R$, the matrix has the following jumps:
    \[
R_+(z) = R_-(z)
\begin{cases}
M(z)		
\begin{pmatrix} 
I_r & 0_r \\
\varphi^{-2n}(z) W^{-1}(z) & I_r \end{pmatrix}
M(z)^{-1}, &  \begin{array}{l} z \text{ on the lips of the lens} \\
	\text{outside of the disks}, \end{array} \\
P(z)M(z)^{-1}, & z\in \partial D(1,\delta), \\
\widetilde{P}(z) M(z)^{-1}, & z \in \partial D(-1,\delta).
\end{cases}
\]
\item As $z\to\infty$, we have the asymptotic behavior $R(z)=I_{2r}+\mathcal{O}(z^{-1})$.
\end{enumerate}

Since $M(z)$ and $W(z)$ are independent of $n$, and $|\varphi(z)|>1$ on the lips of lens outside the disks, 
we can verify that 
$R_+ = R_-(I_{2r} + \mathcal{O}(n^{-1}))$ on the
two circles and $R_+ = R_-(I_{2r} + \mathcal{O}(e^{-cn}))$ (with $c > 0$)
on the lips of the lens outside the disks. 
The conclusion of the steepest descent analysis then is that 
\begin{equation} \label{Rasymp} 
	R(z) = I_{2r} + \mathcal{O}\left(\frac{1}{n (1+|z|)}\right)  \quad \text{ as } n \to \infty, \end{equation}
uniformly for $z\in\mathbb{C}\setminus \Sigma_R$.

\begin{figure}[t]
	\centering
	\includegraphics[scale=1]{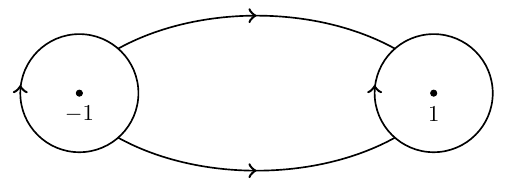}
	\caption{System of contours $\Sigma_R$ in the RH problem for $R(z)$.}
	\label{fig:R}
\end{figure}

\subsection{Asymptotic expansion of $R$}
The large $n$ behavior \eqref{Rasymp} will suffice for
the proof of main term in Theorems \ref{theorem14}
and for the proof of Theorems \ref{theorem15}--\ref{theorem17}
that deal with the asymptotic behavior of the MVOP.
For the  large $n$ behavior of the recurrence coefficients
as stated in Theorem \ref{theorem18} we need more
information on $R$. In fact it will be true that $R$
has a full asymptotic expansion
\begin{equation} \label{Rexpansion}
	R(z) \sim I_{2r} + \sum_{k=1}^{\infty} \frac{R_k(z)}{n^k} 
	\end{equation}
as $n \to \infty$, that is uniform for $z \in \mathbb C \setminus (\partial D(1,\delta) \cup \partial D(-1,\delta))$.
Furthermore the expansion has a double asymptotic property
\[ 
\left\| R(z) - I_{2r} - \sum_{k=1}^{\ell} \frac{R_{k}(z)}{n^k}\right\|
	\leq \frac{C_{\ell}}{|z| n^{\ell+1}}, \quad C_{\ell} > 0,
		\]
for $\ell \geq 1$ and $|z| > 2$. This is analogous
to \cite[Theorem 7.10]{DKMVZ99} or \cite[Lemma 8.3]{KMVV04}, and the proof is similar.
The matrix valued functions $R_k(z)$ are meromorphic with
poles in $\pm 1$ only and $R_k(z) = \mathcal{O}(z^{-1})$ as $z \to \infty$.

The asymptotic expansion of $R$ follows from an expansion
of the jump matrices of $R$ on the two circles $\partial D(\pm 1,\delta)$. We write 
\[ \Delta(z)  = \begin{cases} P(z) M(z)^{-1} - I_{2r}
		& \text{ for } z \in \partial D(1,\delta) \\
		\widetilde{P}(z) M(z)^{-1} - I_{2r}
		& \text{ for } z \in \partial D(-1,\delta).
		\end{cases} \]
Then $\Delta$ also depends on $n$ (which is supressed in the notation)
and	 $\Delta$ has an asymptotic expansion
\begin{equation} \label{Deltaexpansion}
\Delta(z) = \sum_{k=1}^{\infty} \frac{\Delta_k(z)}{n^k}, 
\end{equation}
as $n \to \infty$, with
\[ \Delta_k(z) = \frac{1}{2^k g(z)^k}
	M(z) \begin{pmatrix} V(z) & 0_r \\ 0_r & V(\overline{z})^{-\ast} \end{pmatrix} \Pi_r^{-1}
	\Psi_k \Pi_r \begin{pmatrix} V^{-1}(z) & 0_r \\ 0_r & V(\overline{z})^{\ast} \end{pmatrix} M^{-1}(z) \]
 with
\[ g(z) = \begin{cases} \log \varphi(z), & \text{ on } 
	\partial D(1,\delta), \\
	\log \tilde{\varphi}(z), & \text{ on } 
	\partial D(-1,\delta), \end{cases} \]
and $\Psi_k$ is a piecewise constant matrix
\[ \Psi_k = \begin{cases} 
	\diag\left( (\alpha_j,k-1) \begin{pmatrix} 
		\frac{(-1)^k}{k} \left(\alpha_j^2 + \frac{k}{2}-\frac{1}{4}\right) & - (k-\frac{1}{2})i \\
		(-1)^k (k-\frac{1}{2}) i & \frac{1}{k} \left(\alpha_j^2 +\frac{k}{2} - \frac{1}{4} \right) \end{pmatrix}\right), 
		& \text{ on } \partial D(1,\delta), \\
\diag\left( (\beta_j,k-1) \begin{pmatrix} 
	\frac{(-1)^k}{k} \left(\beta_j^2 + \frac{k}{2}-\frac{1}{4}\right) &  (k-\frac{1}{2})i \\
	(-1)^{k+1} (k-\frac{1}{2}) i & \frac{1}{k} \left(\beta_j^2 +\frac{k}{2} - \frac{1}{4} \right) \end{pmatrix} \right),
	& \text{ on } \partial D(-1,\delta).
	\end{cases} 
	\]
Thus $\Psi_k$ is a block diagonal matrix with $2 \times 2$ blocks
as $j$ is varying from $1$ to $r$. The numbers $(\alpha_j,k-1)$
and $(\beta_j,k-1)$ come from asymptotic expansions of Bessel
functions. In general we have $(\nu,0) = 1$ and
\begin{equation}\label{nukdef}
		(\nu,k)
		=
		\frac{(4\nu^2-1)(4\nu^2-9)\cdots (4\nu^2-(2k-1)^2)}{4^k k!}, \qquad k \geq 1.
	\end{equation}
	
	The analogue of Lemma 8.2 in \cite{KMVV04} holds. That is,
	for some $\delta_0 > \delta$, we have that
	$\Delta_k$ has an analytic continuation to 
	$(D(1,\delta_0) \setminus \{1\}) \cup D(-1,\delta_0) \setminus \{-1\})$ 
	with poles of order $\leq \left[ \frac{k+1}{2} \right]$
	at $z=1$ and $z=-1$. 
	
	The matrix valued functions $R_k(z)$, for $k\geq 1$, are obtained
	from additive RH problems arising from the relation $R_+(z)=R_-(z)(I_{2r}+\Delta(z))$ for $z\in\partial D(1,\delta)\cup\partial D(-1,\delta)$ together with \eqref{Rexpansion} and \eqref{Deltaexpansion}. The first one is 
	\begin{equation}\label{eq:RHPR1}
		R_{1+}(z)=R_{1-}(z)+\Delta_1(z), \qquad z\in\partial D(1,\delta)\cup D(-1,\delta),
	\end{equation}
	with $R_1(z) = \mathcal{O}(z^{-1})$ as $z \to \infty$.
	Since $\Delta_1(z)$ has simple poles at $z=\pm 1$, we write
	\begin{equation}\label{eq:D1residues}
		\Delta_1(z)=\frac{A^{(1)}}{z-1}+\mathcal{O}(1), \quad z\to 1,\qquad
		\Delta_1(z)=\frac{B^{(1)}}{z+1}+\mathcal{O}(1), \quad z\to -1,
	\end{equation}
	for some constant matrices $A^{(1)}$ and $B^{(1)}$. Then the solution of the additive RH problem for $R_1(z)$ is given by
	\begin{equation}\label{eq:solRHR1}
		R_1(z)
		=
		\begin{cases}
			\displaystyle
			\frac{A^{(1)}}{z-1}+\frac{B^{(1)}}{z+1}, &\qquad
			z\in\mathbb{C}\setminus\left(\overline{D(1,\delta)}\cup \overline{D(-1,\delta)}\right),\\[3mm]
			\displaystyle
			\frac{A^{(1)}}{z-1}+\frac{B^{(1)}}{z+1}-\Delta_1(z), &\qquad
			z\in D(1,\delta)\cup D(-1,\delta).
		\end{cases}
	\end{equation}
	
	From the previous formulas, we have for $z \in D(1,\delta) \setminus \{1\}$,
	\begin{multline*}
		\Delta_1(z) =
		\frac{1}{2g(z)}
		M(z)
		\begin{pmatrix}
			V(z) & 0_r\\
			0_r & V(\overline{z})^{-\ast}
		\end{pmatrix} \Pi_r^{-1} 
		\diag \begin{pmatrix}
			- (\alpha_j^2 + \frac{1}{4}) & - \frac{1}{2} i \\
			- \frac{1}{2} i & \alpha_j^2 + \frac{1}{4} 
		\end{pmatrix} 
		\Pi_r \\ \times
		\begin{pmatrix}
			V^{-1}(z) & 0_r\\
			0_r & V(\overline{z})^{\ast}
		\end{pmatrix}
		M^{-1}(z).
	\end{multline*}
	for $z \in \partial D(1,\delta)$.
	We note $\ds \lim_{z \to 1} \frac{ (z-1)^{1/2}}{g(z)}  = \frac{1}{\sqrt{2}}$ and using \eqref{Mdef}, the explicit form of $M_0(z)$ and the local expansions of the matrices $D(z)^{-1}V(z)$ and $D(\overline{z})^{\ast}V(\overline{z})^{-\ast}$,
	where we recall that $U_1$ is unitary,
	\begin{align*} \lim_{z \to 1} (z-1)^{1/4} M(z) \begin{pmatrix} V(z) & 0_r
		\\ 0_r & V(\overline{z})^{-\ast} \end{pmatrix} 
	& = \frac{1}{2^{3/4}} \begin{pmatrix} D(\infty) U_1 & 0_r \\
			0_r & D(\infty)^{-\ast} U_1 \end{pmatrix}
		\begin{pmatrix} I_r & i I_r \\ - i I_r & I_r \end{pmatrix}
	\\[5pt]
	\lim_{z \to 1} (z-1)^{1/4} \begin{pmatrix} V^{-1}(z) & 0_r
		\\ 0_r & V(\overline{z})^{\ast} \end{pmatrix} M^{-1}(z) 
	& = \frac{1}{2^{3/4}} 
	\begin{pmatrix} I_r & -i I_r \\  i I_r & I_r \end{pmatrix}
	\begin{pmatrix} U_1^{-1} D(\infty)^{-1} & 0_r \\
		0_r & U_1^{-1} D(\infty)^{\ast} \end{pmatrix}.
	\end{align*} 
	Then we can calculate the residue $A^{(1)}$ in \eqref{eq:D1residues}, 
	\begin{multline}\label{eq:A1}
		A^{(1)} = \lim_{z \to 1} (z-1) \Delta_1(z) \\
	=	\frac{1}{8}
		\begin{pmatrix} 
			D(\infty) U_1 & 0_r \\
			0_r & D(\infty)^{-\ast} U_1 
		\end{pmatrix}
		\begin{pmatrix}
			I_r & iI_r\\
			-iI_r & I_r
		\end{pmatrix} 
		 \Pi_r^{-1} 
		\diag \begin{pmatrix}
			- (\alpha_j^2 + \frac{1}{4}) & - \frac{1}{2} i \\
			- \frac{1}{2} i & \alpha_j^2 + \frac{1}{4} 
		\end{pmatrix} 
		\Pi_r \\ 
		\times
		\begin{pmatrix}
			I_r & -iI_r\\
			iI_r & I_r
		\end{pmatrix}
		\begin{pmatrix} 
		 U_1^{-1}	D(\infty)^{-1} & 0_r \\
			0_r & U_1^{-1} D(\infty)^{\ast} 
		\end{pmatrix}\\
		=
		\frac{1}{4}
			\begin{pmatrix} 
			D(\infty) U_1 & 0_r \\
			0_r & D(\infty)^{-\ast} U_1 
		\end{pmatrix} \Pi_r^{-1}
		\diag \left( (\alpha_j^2-\frac{1}{4}) \begin{pmatrix}
			- 1 & i  \\
			i & 1  
		\end{pmatrix} \right) 
		\Pi_r \\ \times 
		\begin{pmatrix} U_1^{-1} D(\infty)^{-1} & 0_r \\
			0_r & U_1^{-1} D(\infty)^{\ast} \end{pmatrix}.
	\end{multline}
 Similarly, 
	\begin{multline}\label{eq:B1}
		B^{(1)} = \lim_{z \to -1} (z+1) \Delta_1(z) \\
		= \frac{1}{4}
		\begin{pmatrix} 
			D(\infty) U_{-1} & 0_r \\
			0_r & D(\infty)^{-\ast}  U_{-1}  
		\end{pmatrix} \Pi_r^{-1} 
		\diag \left( (\beta_j^2-\frac{1}{4}) \begin{pmatrix} 1 & i  \\
			i & -1 \end{pmatrix} \right) \Pi_r \\ \times
		\begin{pmatrix} 
			U_{-1}^{-1} D(\infty)^{-1} & 0_r \\
			0_r & U_{-1}^{-1} D(\infty)^{\ast}
		\end{pmatrix}.
	\end{multline}

%%%%%%%%%%%%%%%%%%%%%

\section{Proofs of the theorems}

\subsection{Proof of Theorem \ref{theorem14}} \label{subsec41}

\begin{proof}%[Proof of Theorem \ref{theorem14}.]
Let $U$ be an open neighborhood of $[-1,1]$ in the complex plane.
We may assume that the lens around $[-1,1]$ and
the disks $D(\pm 1,\delta)$ are fully
contained in $U$. Then for $z \in \mathbb C \setminus U$, 
we have by \eqref{PninY}, \eqref{Tdef}, and \eqref{Sdef2},
\begin{equation} \label{PninT}
	P_n(z)  =
	\begin{pmatrix} I_r & 0_r \end{pmatrix} T(z) \begin{pmatrix} I_r \\ 0_r \end{pmatrix} \left(\frac{\varphi(z)}{2} \right)^n
	=
	\begin{pmatrix} I_r & 0_r \end{pmatrix} S(z) \begin{pmatrix} I_r \\ 0_r \end{pmatrix} \left(\frac{\varphi(z)}{2} \right)^n.
\end{equation}
Then $S=RM$ by \eqref{Rdef}, and using \eqref{Mdef}, we obtain
\[
\begin{aligned}
	\frac{2^n P_n(z)}{\varphi(z)^n}
	&=
	\begin{pmatrix} I_r & 0_r \end{pmatrix} R(z)
	\begin{pmatrix} D(\infty) & 0_r \\
		0_r & D(\infty)^{-\ast} \end{pmatrix}
	M_0(z) \begin{pmatrix} D(z)^{-1} & 0_r
		\\ 0_r & D(\overline{z})^{\ast} \end{pmatrix}
	\begin{pmatrix} I_r \\ 0_r \end{pmatrix} \\
	&=
	\begin{pmatrix} R_{11}(z)D(\infty) & R_{12}(z)D(\infty)^{-\ast} \end{pmatrix}
	\begin{pmatrix} M_{0,11}(z)  \\ M_{0,21}(z) \end{pmatrix}D(z)^{-1} 
\end{aligned}
\] 
where $R_{11} = \begin{pmatrix} I_r & 0_r \end{pmatrix} R
	\begin{pmatrix} I_r \\ 0_r \end{pmatrix}$
	and $R_{12} = \begin{pmatrix} I_r & 0_r \end{pmatrix} R
	\begin{pmatrix} 0_r \\ I_r \end{pmatrix}$ denote
	$r \times r$ submatrices of $R$ and similarly
	for $M_{0,11}$ and $M_{0,21}$.
Because of \eqref{M0def} these latter matrices are
multiples of the identity matrix, and we obtain 
\[ \frac{2^n P_n(z)}{\varphi(z)^n}
	= \left[\frac{1}{2} \left(\gamma(z) + \gamma(z)^{-1}\right)
	R_{11}(z)D(\infty)
	- \frac{1}{2i} \left(\gamma(z) - \gamma(z)^{-1}\right) R_{12}(z)D(\infty)^{-\ast} 
	 \right] D(z)^{-1} 
\] 
with $\gamma$ as in  \eqref{M0def}.
Using \eqref{Rexpansion} we conclude that
\begin{equation} \label{Pnexpansion}
	\frac{2^n P_n(z)}{\varphi(z)^n} \sim
	\sum_{k=0}^{\infty} \frac{\widetilde{\Pi}_k(z)}{n^k} 
	\end{equation} has a full asymptotic
expansion in inverse powers of $n$, with analytic
matrix valued functions $\widetilde{\Pi}_k$.

From 
\[ R(z) = I_{2r} + \frac{R_1(z)}{n} + \mathcal{O}(n^{-2}) \] 
uniformly
for $z \in \mathbb C \setminus U$, we obtain 
\begin{multline} \label{Pnexpansion2}
\frac{2^n P_n(z)}{\varphi(z)^n}
=
\left[ \frac{1}{2} \left(\gamma(z) + \gamma(z)^{-1}\right)
\left(I_r+ \frac{\left(R_1(z)\right)_{11}}{n} + \mathcal{O}(n^{-2})\right)D(\infty)
 \right. \\
 \left. - \frac{1}{2i} \left(\gamma(z) - \gamma(z)^{-1}\right) \left( \frac{\left(R_1(z)\right)_{12}}{n} + \mathcal{O}(n^{-2}) \right) D(\infty)^{-\ast}
\right]
D(z)^{-1}.
\end{multline}

Recall that  $D(\infty)$ and $D(z)^{-1}$ are invertible matrices
that are independent of $n$. Then we arrive at the leading term in the
expansion \eqref{Pnexpansion}
\begin{equation} \label{Pi0tildedef} 
	\widetilde{\Pi}_0(z) = \lim_{n \to \infty} \frac{2^n P_n(z)}{\varphi(z)^n} =
	\frac{1}{2} \left(\gamma(z)+\gamma(z)^{-1}\right) D(\infty)D(z)^{-1},
\end{equation}
which is \eqref{Pi0def} by simple rewriting of the
scalar prefactor. 

Next, we see from \eqref{Pnexpansion2} that 
the $n^{-1}$ term in \eqref{Pnexpansion} has the coefficient
\begin{equation} \label{Pitilde1z} \widetilde{\Pi}_1(z)
	= \left[	\frac{1}{2} \left(\gamma(z)+\gamma(z)^{-1}\right)
	(R_1(z))_{11} D(\infty) 
	- \frac{1}{2i} \left(\gamma(z) - \gamma(z)^{-1}\right) 
	 (R_1(z))_{12} D(\infty)^{-\ast} \right]
	 D(z)^{-1}.\end{equation}
We use \eqref{eq:solRHR1} and \eqref{eq:A1}, \eqref{eq:B1}
and the property \eqref{AjPir} of the permutation matrix $\Pi_r$
to conclude
\begin{align} \nonumber (R_{1}(z))_{11}
	& = \frac{(A^{(1)})_{11}}{z-1} +  \frac{(B^{(1)})_{11}}{z+1} \\
	& = - \frac{1}{4(z-1)} D(\infty) U_1 
		\diag\left(\alpha_1^2-\frac{1}{4}, \ldots, 
			\alpha_r^2 - \frac{1}{4} \right) U_1^{-1} D(\infty)^{-1} \nonumber \\
	& 	\quad \label{R1z11def}
	+  \frac{1}{4(z+1)} D(\infty) U_{-1} 
	\diag\left(\beta_1^2-\frac{1}{4}, \ldots, 
	\beta_r^2 - \frac{1}{4} \right) U_{-1}^{-1} D(\infty)^{-1}
\end{align}
and similarly
\begin{align}  (R_{1}(z))_{12} & =
	\frac{i}{4(z-1)} D(\infty) U_1 
	\diag\left(\alpha_1^2-\frac{1}{4}, \ldots, 
	\alpha_r^2 - \frac{1}{4} \right) U_1^{-1} D(\infty)^{\ast} \nonumber \\ \label{R1z12def}
	& \quad
	+  \frac{i}{4(z+1)} D(\infty) U_{-1} 
	\diag\left(\beta_1^2-\frac{1}{4}, \ldots, 
	\beta_r^2 - \frac{1}{4} \right) U_{-1}^{-1} D(\infty)^{\ast}
\end{align}
Inserting \eqref{R1z11def} and \eqref{R1z12def}
into \eqref{Pitilde1z}, we obtain
\begin{align*}
	\widetilde{\Pi}_1(z)
	= \frac{1}{2} \left(\gamma(z)+\gamma(z)^{-1}\right)
		D(\infty) \Pi_1(z) 	
		D(z)^{-1}  
\end{align*}
with
\begin{multline*} \Pi_1(z) =
	- \frac{1}{4(z-1)} U_1 
\diag\left(\alpha_1^2-\frac{1}{4}, \ldots, 
\alpha_r^2 - \frac{1}{4} \right) U_1^{-1} \\ 
	+	 \frac{1}{4(z+1)} U_{-1} 
	\diag\left(\beta_1^2-\frac{1}{4}, \ldots, 
	\beta_r^2 - \frac{1}{4} \right) U_{-1}^{-1} \\ 
	 - \frac{\gamma(z) - \gamma(z)^{-1}}{\gamma(z) + \gamma(z)^{-1}}
	\left[\frac{1}{4(z-1)} U_1 
	\diag\left(\alpha_1^2-\frac{1}{4}, \ldots, 
	\alpha_r^2 - \frac{1}{4} \right) U_1^{-1} \right. \\
	 \left.	+  \frac{1}{4(z+1)} U_{-1} 
	\diag\left(\beta_1^2-\frac{1}{4}, \ldots, 
	\beta_r^2 - \frac{1}{4} \right) U_{-1}^{-1} \right].	
\end{multline*}
This leads to \eqref{Pi1def} since
\begin{align*}
	\frac{1}{z-1} \left[ 1 + \frac{\gamma(z) - \gamma(z)^{-1}}{\gamma(z) + \gamma(z)^{-1}} \right]
	& =\frac{2}{\varphi(z) - 1}, \\
	\frac{1}{z+1} \left[ 1 - \frac{\gamma(z) - \gamma(z)^{-1}}{\gamma(z) + \gamma(z)^{-1}} \right]
	& =\frac{2}{\varphi(z) + 1},
\end{align*} 
as can be verified by direct calculation.
\end{proof}
%%%%%%%%%%%%%%%%%%%%%%

\subsection{Proof of Theorem \ref{theorem15}} \label{subsec42}

\begin{proof}
We have by \eqref{PninY} and \eqref{Tdef}
\[
P_n(z)
=
\begin{pmatrix} I_r  & 0_r \end{pmatrix} Y(z) \begin{pmatrix} I_r  \\ 0_r \end{pmatrix}
=
 \begin{pmatrix} I_r  & 0_r \end{pmatrix} T(z) \begin{pmatrix} I_r  \\ 0_r \end{pmatrix}  \left( \frac{\varphi(z)}{2}\right)^n. \\
\]
For $z$ in the upper part of the lens outside
of the disks $D(\pm 1,\delta)$, we then have by \eqref{Sdef1}
and \eqref{Rdef}
\[
\begin{aligned}
	2^n P_n(z)
	&=
	 \begin{pmatrix} I_r  & 0_r \end{pmatrix}
	R(z)M(z)
	\begin{pmatrix} \varphi(z)^n I_r  \\ \varphi(z)^{-n}W(z)^{-1} \end{pmatrix}.
\end{aligned}
\]
We take the limit $z \to x \in (-1+\delta,1-\delta)$ and split the previous formula into two terms. Then we use \eqref{Mjump} to obtain
\begin{multline*}
	2^n P_n(x) =
	\begin{pmatrix} I_r  & 0_r \end{pmatrix} R(x)
	M_+(x) \begin{pmatrix} I_r \\ 0_r \end{pmatrix} \varphi_+(x)^n \\
	 + \begin{pmatrix} I_r & 0_r \end{pmatrix} R(x) 
	M_-(x) \begin{pmatrix} 0_r & W(x) \\ -W(x)^{-1} & 0_r
	\end{pmatrix} \begin{pmatrix} 0_2 \\ \varphi_+(x)^{-n} W(x)^{-1}
	\end{pmatrix} \\
	 = \begin{pmatrix} I_r  & 0_r \end{pmatrix} R(x) 
	\left(M_+(x) \varphi_+(x)^{n} + M_-(x) \varphi_-(x)^{n} \right)
	\begin{pmatrix} I_r \\ 0_r \end{pmatrix}.
\end{multline*}
where we also used $\varphi_+(x) \varphi_-(x) = 1$.

Note that $\varphi_{\pm}(x) = e^{\pm i \theta(x)}$, 
with $\theta(x) = \arccos(x)$. 
Then by the above and \eqref{Rasymp} to obtain
\[ 	2^n P_n(x) =
	\begin{pmatrix} I_r & 0_r \end{pmatrix} M_+(x)
		\begin{pmatrix} I_r \\ 0_r \end{pmatrix} e^{in \theta(x)}
	+	\begin{pmatrix} I_r & 0_r \end{pmatrix} M_-(x)
	\begin{pmatrix} I_r \\ 0_r \end{pmatrix} e^{-in \theta(x)} 
	+ \mathcal{O}(n^{-1}) \]
as $n \to \infty$. Using \eqref{Mdef} and \eqref{M0def} and
noting that
\[ 
\frac{1}{2} \left(\gamma_{\pm}(x) + \gamma_{\pm}^{-1}(x)\right)
	=  \frac{1}{\sqrt{2} \sqrt[4]{1-x^2}}
	e^{\pm \frac{i}{2} \theta(x)\mp \frac{\pi i}{4}, 
		\qquad -1 < x < 1,} \]
we arrive at \eqref{eq:innerasymp0}, with a $\mathcal{O}(n^{-1})$
term that is uniform for $x \in (-1+\delta, 1-\delta)$.

If the weight matrix $W$ is real symmetric, then $P_n(x)$
is real valued for real $x$. Then the normalized Szeg\H{o}
function has the symmetry \eqref{Dreal} which implies
$D_-(x) = \overline{D_+(x)}$ for $-1 < x < 1$. 
Hence the two terms within parantheses in \eqref{eq:innerasymp0}
are each other's complex conjugates,
and \eqref{eq:innerasymp} follows from \eqref{eq:innerasymp0}.

Since $\delta > 0$ can be taken arbitrarily small, the
asymptotic formulas \eqref{eq:innerasymp0}
and \eqref{eq:innerasymp} are valid uniformly for $x$ in any
compact subset of $(-1,1)$.
\end{proof}

%%%%%%%%%%%%%%%%%%%%%%%%%%%%%

\subsection{Proofs of Theorems \ref{theorem16} and \ref{theorem17}}
\label{subsec43}

\begin{proof}[Proof of Theorem \ref{theorem16}]

Let $x \in (1-\delta,1)$ in the upper part of the lens. 
Then, starting from \eqref{PninY} and following the transformations \eqref{Tdef}, 
\eqref{Sdef1}, \eqref{Rdef}, we have
\begin{align*} P_n(x)  = \begin{pmatrix} I_r & 0_r \end{pmatrix}	
	Y_+(x)  \begin{pmatrix} I_r \\ 0_r \end{pmatrix} 
		& = 2^{-n} \begin{pmatrix} I_r & 0_r \end{pmatrix}	
		T_+(x)  \begin{pmatrix} I_r \\ 0_r \end{pmatrix} 
		\varphi_+(x)^n  \\
		& = 2^{-n} \begin{pmatrix} I_r & 0_r \end{pmatrix}	
		S_+(x)  \begin{pmatrix} I_r \\ \varphi_+(x)^{-2n} W^{-1}(x) \end{pmatrix} \varphi_+(x)^n \\
		& =
		2^{-n} \begin{pmatrix} I_r & 0_r \end{pmatrix}	
		R(x) P_+(x)  \begin{pmatrix} \varphi_{+}(x)^n \\ \varphi_+(x)^{-n} W^{-1}(x) \end{pmatrix}.
\end{align*}
Inserting the formula \eqref{Pdef} for the local parametrix
$P$, and using $W = V_- V_-^\ast$ from \eqref{WVfactor} we get
\begin{align*}
		2^n P_n(x) & =
		\begin{pmatrix} I_r & 0_r \end{pmatrix}	
		R(x) E_n(x) P^{(1)}_+(x)  \begin{pmatrix} V_+^{-1}(x) \\ 
			V_-^{-1}(x) \end{pmatrix}. \end{align*}
The definition \eqref{Vdef} of $V$ and the factorization
\eqref{HQLambda} gives
 that $V_{\pm}(x)  = \sqrt{W(x)} Q(x)  e^{\pm \vec{\alpha}\pi i/2}$, 
 with $\vec{\alpha} = \diag(\alpha_1, \ldots, \alpha_r)$,
 where 
 \[ \sqrt{W(x)} = (1-x)^{\alpha/2} (1+x)^{\beta/2}
 	Q(x) \diag \left( \lambda_1(x)^{1/2}, \ldots,
 		\lambda_r(x)^{1/2} \right) Q(x)^\ast \]
is the positive square root of $W(x)$. Thus
\begin{align} \label{Pnnear1formula1}
 2^n	P_n(x) \sqrt{W(x)} & =
		\begin{pmatrix} I_r & 0_r \end{pmatrix}	
		R(x) E_n(x) P^{(1)}_+(x)  \begin{pmatrix} e^{-\frac{1}{2}
		\vec{\alpha}\pi i}  \\ 
		e^{\frac{1}{2}\vec{\alpha}\pi i}  \end{pmatrix} 
			 Q(x)^\ast.
		\end{align}
	
Recall that $P^{(1)}$ is given by \eqref{P1def}
in terms of the Bessel parametrices $\Psi_{\alpha_j}$ for $j=1, \ldots,r$.
Given $x \in (1-\delta, 1)$ we have from \eqref{fdef} that $f(x) = - \arccos(x)^2 < 0$ and 
by \eqref{Psialphadef}, see also the second line in
\eqref{Psialphadef},
\begin{multline} 
	\Psi_{\alpha,+}(n^2 f(x)) \\ =
	\begin{pmatrix} 
		\frac{1}{2} H_{\alpha}^{(1)}(2n \sqrt{-f(x)}) & \frac{1}{2} H_{\alpha}^{(2)}(2n \sqrt{-f(x)}) \\
	\pi i n \sqrt{-f(x)} \left( H_{\alpha}^{(1)}\right)'(2n \sqrt{-f(x)}) & 
	\pi i n \sqrt{-f(x)} \left( H_{\alpha}^{(2)}\right)'(2n \sqrt{-f(x)})
 \end{pmatrix}	\\ \times
\begin{pmatrix}  e^{\frac{1}{2} \alpha \pi i} & 0 \\
	0 & e^{- \frac{1}{2} \alpha \pi i} \end{pmatrix},
\end{multline}
where $H_{\alpha}^{(1)}$ and $H_{\alpha}^{(2)}$ are the
Hankel functions of order $\alpha$, and
$\left(H_{\alpha}^{(1)}\right)'$ and $\left(H_{\alpha}^{(2)}\right)'$ are their derivatives.
We use parameters
$\alpha_1, \ldots, \alpha_r$ and the short hand notation
\[ H_{\vec{\alpha}}^{(j)}(\xi) = \diag \left(H_{\alpha_1}^{(j)}(\xi),
\ldots, H_{\alpha_r}^{(j)}(\xi) \right), \qquad j =1,2, \]
and similarly for $\left(H_{\vec{\alpha}}^{(j)}\right)'$.
Thus by \eqref{P1def} and \eqref{Psialphadef}
\begin{multline*} 
	P^{(1)}_+(x) 
	= \begin{pmatrix} 
		\frac{1}{2} H_{\vec{\alpha}}^{(1)}(2n \sqrt{-f(x)}) & \frac{1}{2} H_{\vec{\alpha}}^{(2)}(2n \sqrt{-f(x)}) \\
		\pi i n \sqrt{-f(x)} \left( H_{\vec{\alpha}}^{(1)}\right)'(2n \sqrt{-f(x)}) & 
		\pi i n \sqrt{-f(x)} \left( H_{\vec{\alpha}}^{(2)}\right)'(2n \sqrt{-f(x)})
	\end{pmatrix} \\ 
\times 
	\begin{pmatrix} e^{\frac{1}{2} \vec{\alpha}\pi i} & 0_r \\ 0_r & e^{-\frac{1}{2}\vec{\alpha}\pi i} \end{pmatrix}.
	\end{multline*}
Hence, because of relation $J_{\alpha} = \frac{1}{2} \left( H_{\alpha}^{(1)} + H_{\alpha}^{(2)}\right)$ between the Bessel
function of the first kind and the Hankel functions, we obtain
\begin{align} 
	P^{(1)}_+ (x)  \begin{pmatrix} e^{-\frac{1}{2}\vec{\alpha}\pi i} \\ e^{\frac{1}{2} \vec{\alpha}\pi i} \end{pmatrix} 
  & = \begin{pmatrix} I_r & 0_r \\
  	0_r & 2\pi i n I_r \end{pmatrix} \begin{pmatrix} 
  	 J_{\vec{\alpha}}\left(2n \sqrt{-f(x)}\right) \\
  	\sqrt{-f(x)} \left( J_{\vec{\alpha}}\right)'\left(2n \sqrt{-f(x)}\right) 
  \end{pmatrix} \label{PtildeJalpha}
\end{align}
with $J_{\vec{\alpha}} = \diag \left(J_{\alpha_1}, \ldots,
 J_{\alpha_r}\right)$ and
 similarly for $\left(J_{\vec{\alpha}} \right)'$.
 Using this in \eqref{Pnnear1formula1}, we obtain
 \begin{multline} \label{Pnnear1formula2}
 	P_n(x) \sqrt{W(x)}  =
 	2^{-n} \begin{pmatrix} I_r & 0_r \end{pmatrix}	
 	R(x) E_n(x) 
 	\begin{pmatrix} I_r & 0_r \\
 		0_r & 2\pi i n I_r \end{pmatrix} \\ \times
 	\begin{pmatrix} 
 		J_{\vec{\alpha}}\left(2n \sqrt{-f(x)}\right) \\
 		\sqrt{-f(x)} \left( J_{\vec{\alpha}}\right)'\left(2n \sqrt{-f(x)}\right) 
 	\end{pmatrix}
 	Q(x)^\ast.
 \end{multline}

Next we focus on the product $E_n(x) \begin{pmatrix} I_r & 0_r
	\\ 0_r & 2 \pi i n I_r \end{pmatrix}$.   By \eqref{Ensum}
it is equal to
\begin{equation} \label{Enformula2}
	E_n(x) \begin{pmatrix} I_r & 0_r
		\\ 0_r & 2 \pi i n I_r \end{pmatrix}
	= \frac{\sqrt{\pi n}}{2}
		\begin{pmatrix} D(\infty) & 0_r \\ 0_r & D(\infty)^{-\ast} \end{pmatrix}
		\begin{pmatrix} I_r & i I_r \\ i I_r & I_r \end{pmatrix}
		\begin{pmatrix} E^{(1)}(x) & i E^{(2)}(x) \end{pmatrix}.
\end{equation}
The $n$-dependence appears only in the prefactor $\frac{\sqrt{\pi n}}{2}$. Since $R(x) = I_{2r} + \mathcal{O}(n^{-1})$ by \eqref{Rasymp}, we obtain from \eqref{Pnnear1formula2} and
 \eqref{Enformula2} that
\begin{multline} \label{Pnnear1formula3}
	2^n P_n(x) \sqrt{W(x)} 
	= \sqrt{\pi n}{2} 
  	 D(\infty) 
  	\begin{pmatrix} I_r & i I_r \end{pmatrix} 
  	  	\begin{pmatrix} E^{(1)}(x) & 
	  i  E^{(2)}(x) \end{pmatrix} 
  \left( I_{2r} + \mathcal{O}(n^{-1}) \right) \\
  \times
  	   \begin{pmatrix} 
  	  	J_{\vec{\alpha}}(2n \sqrt{-f(x)}) \\
  	  	\sqrt{-f(x)} \left( J_{\vec{\alpha}}\right)'(2n \sqrt{-f(x)}) 
  	  \end{pmatrix} Q(x)^\ast.
 \end{multline} 

Note that $E^{(1)}$ and $E^{(2)}$ are matrices
of size $2r \times r$ that are explicitly given in 
\eqref{E1z} and \eqref{E2z}. They are both analytic 
in the disk $D(1,\delta)$ around $z=1$, as was shown
in the proof of Lemma \ref{Enremove}.
For $z \in D(1,\delta)$ we may readily verify 
from \eqref{E1z} and \eqref{E2z}, and from the formula
\eqref{M0def} for $\gamma(z)$ that 
\begin{multline} 
	\begin{pmatrix} I_r & i I_r \end{pmatrix}
	\begin{pmatrix} E^{(1)}(z) & i E^{(2)}(z) \end{pmatrix}
	= \frac{f(z)^{1/4}}{(z^2-1)^{1/4}}	\\
	\times
	\begin{pmatrix} \left((z+1)^{1/2} + (z-1)^{1/2}\right)
		D(z)^{-1} V(z) & 
		\left((z+1)^{1/2} - (z-1)^{1/2}\right)
		D(\overline{z})^\ast V(\overline{z})^{-\ast}  \end{pmatrix} \\
	\times
	\begin{pmatrix} I_r & f(z)^{-1/2} I_r \\
		I_r & - f(z)^{-1/2} I_r \end{pmatrix}.
	\end{multline}
We use this in \eqref{Pnnear1formula3} for $z = x \in (1-\delta,1)$ with $+$ boundary
values, where we note the identities
\[
\frac{f_+(x)^{1/4}}{(x^2-1)_+^{1/4}} = 
	\frac{\sqrt{\arccos(x)}}{\sqrt{2} (1-x^2)^{1/4}},\qquad   f_+(x)^{-1/2} = -i (\sqrt{-f(x)})^{-1}
 \]
 and
 $(x-1)^{1/2}_+ = i \sqrt{1-x}$ to obtain		
\begin{multline} \label{PnrootW1}
	P_n(x) \sqrt{W(x)} 
	= \frac{\sqrt{\pi n \arccos x}}{2^{n+1} (1-x^2)^{1/4}} D(\infty) \\
	\times 
	\begin{pmatrix} \frac{\sqrt{1+x} + i \sqrt{1-x}}{\sqrt{2}}
		D_+(x)^{-1} V_+(x) & 
		\frac{\sqrt{1+x} - i \sqrt{1-x}}{\sqrt{2}}
		D_-(x)^\ast V_-(x)^{-\ast}  \end{pmatrix} \\
	\times
	\begin{pmatrix} I_r & -iI_r \\ I_r & iI_r \end{pmatrix}
	\left(I_{2r} + \mathcal O(n^{-1}) \right)
	\begin{pmatrix}  
		J_{\vec{\alpha}}(2n \sqrt{-f(x)}) \\
		 \left( J_{\vec{\alpha}}\right)'(2n \sqrt{-f(x)}) 
	\end{pmatrix} Q(x)^\ast,
\end{multline} 
which proves \eqref{Pnnear1} in view of the definition
\eqref{Azdef}, and the fact that
$D_-(x) D_-(x)^\ast = V_-(x)V_-(x)^\ast$ by \eqref{WVfactor}, which can be rewritten as
\[ D_-(x)^\ast V_-(x)^{-\ast} = D_-(x)^{-1} V_-(x). \]
\end{proof}

\begin{proof}[Proof of Theorem \ref{theorem17}]

We use $x = \cos \frac{\theta}{n}$ in \eqref{Pnnear1}.
As $n \to \infty$, we then have $x \to 1-$, $A_+(x) \to A(1) = U_1$, $A_-(x) \to U_1$
and $\frac{\arccos x}{(1-x^2)^{1/4}} \to 1$.
Because of \eqref{Pnnear1} we then have
\begin{equation} \label{MHproof1} \lim_{n \to \infty}
	\frac{2^n}{\sqrt{\pi n}} P_n(x) \sqrt{W(x)} Q(x)
		= D(\infty) U_1 J_{\vec{\alpha}}(\theta)
	\qquad \text{with } x = \cos \frac{\theta}{n}. \end{equation}
We write
\begin{equation} \label{MHproof2} 
	\sqrt{W(x)} Q(x) = Q(x) \Lambda^{1/2}(x) (1-x)^{\alpha/2} (1+x)^{\beta/2}, \end{equation}
where we used \eqref{Wdef} and \eqref{HQLambda}.
For each $j=1, \ldots, r$ we have
\begin{equation*} 
	\frac{\lambda_j(x) (1-x)^{\alpha} (1+x)^{\beta}}{(1-x)^{\alpha_j}}
	 \to  c_j 2^{\alpha_j} \quad \text{ as } x \to 1-, \end{equation*}
	 because of \eqref{eq:cj}.
Since $1-x = \frac{\theta^2}{2n^2} + \mathcal O(n^{-4})$
as $n \to \infty$, when $x = \cos \frac{\theta}{n}$, we get
\[ n^{2\alpha_j} \lambda_j(x) (1-x)^{\alpha} (1+x)^{\beta}
	\to c_j \theta^{2\alpha_j} \]
as $n \to \infty$. It follows that
\begin{multline} \label{MHproof3} 
 \lim_{n \to \infty}	\Lambda^{1/2}(x) (1-x)^{\alpha/2} (1+x)^{\beta/2}
	\diag\left(c_1^{-1/2} n^{\alpha_1}, \ldots, c_r^{-1/2} n^{\alpha_r}\right) \\
	= \diag\left(\theta^{\alpha_1},
		\ldots, \theta^{\alpha_r} \right), \end{multline}
again with $x = \cos \frac{\theta}{n}$.
Combining \eqref{MHproof1}, \eqref{MHproof2} and \eqref{MHproof3},
we obtain \eqref{eq:MHgeneral}.
\end{proof}

%%%%%%%%%%%%%%%%%%%%%%%%%
\subsection{Proof of Theorem \ref{theorem18}} \label{subsec44}

The recurrence coefficients can be obtained from the solution
of the RH problem for $Y$. To emphasize the dependence on $n$, we write $Y^{(n)}$, and similarly for other matrices that depend
on $n$. For a matrix $X$ of size $2r \times 2r$
we use $X_{ij}$ for $i,j=1,2$ to denote its submatrices of 
size $r \times r$, that is,
\[ X = \begin{pmatrix} X_{11} & X_{12} \\ X_{21} & X_{22} \end{pmatrix}. \]

We start with explicit formulas for the recurrence coefficients $B_n$ and $C_n$ in terms of the matrices $R^{(n)}$ and
$R^{(n+1)}$.
\begin{lemma}
	We have
	\begin{equation} \label{Bnformula} 
		B_n = \lim_{z\to \infty} z \left( R_{11}^{(n)}(z) -
		R_{11}^{(n+1)}(z) \right) \end{equation}
	and
	\begin{equation} \label{Cnformula}
		C_n = \lim_{z \to \infty}
		\left( \frac{i}{2} D(\infty) D(\infty)^\ast	
			+ z R_{12}^{(n)}(z) \right)
		\left(-\frac{i}{2} D(\infty)^{-\ast} D(\infty)^{-1}
			+ z R_{21}^{(n)}(z) \right).
	\end{equation}	
\end{lemma}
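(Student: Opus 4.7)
The plan is to extract $B_n$ and $C_n$ from the coefficient $Y_1^{(n)}$ of $1/z$ in the expansion
\[
Y^{(n)}(z)\diag(z^{-n} I_r, z^n I_r) = I_{2r} + Y_1^{(n)}/z + \mathcal{O}(z^{-2}), \qquad z\to \infty,
\]
and then to translate this into a statement about $R^{(n)}$ by following the chain of transformations $Y\to T\to S\to RM$.

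From \eqref{Ysolution}, the Neumann expansion $1/(s-z) = -\sum_{k\geq 0}s^k/z^{k+1}$ and the orthogonality relation \eqref{MVOPdef}, one identifies $(Y_1^{(n)})_{11}$ as the subleading matrix coefficient of $P_n$, $(Y_1^{(n)})_{12} = -\Gamma_n/(2\pi i)$, and $(Y_1^{(n)})_{21} = -2\pi i\,\Gamma_{n-1}^{-1}$. Matching coefficients of $x^n$ in the three-term recurrence \eqref{eq:TTRR} gives $B_n = (Y_1^{(n)})_{11} - (Y_1^{(n+1)})_{11}$, while testing \eqref{eq:TTRR} against $W P_{n-1}^\ast$ yields $\Gamma_n = C_n \Gamma_{n-1}$, so that $C_n = \Gamma_n \Gamma_{n-1}^{-1} = (Y_1^{(n)})_{12}\,(Y_1^{(n)})_{21}$.

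For $z$ outside the lens and the two disks $D(\pm 1,\delta)$, combining \eqref{Tdef}, \eqref{Sdef2} and \eqref{Rdef} gives
\[
Y^{(n)}(z) = \diag(2^{-n} I_r, 2^n I_r)\, R^{(n)}(z)\, M(z)\, \diag(\varphi(z)^n I_r, \varphi(z)^{-n} I_r).
\]
Since $\varphi(z)/(2z) = 1 - \frac{1}{4z^2} + \mathcal{O}(z^{-4})$, multiplication on the right by $\diag(z^{-n} I_r, z^n I_r)$ introduces only an $\mathcal{O}(z^{-2})$ perturbation of $\diag(2^n I_r, 2^{-n} I_r)$. Hence at order $1/z$ one has
\[
Y_1^{(n)} = \diag(2^{-n} I_r, 2^n I_r)\,(R_{1,\infty}^{(n)} + M_1)\,\diag(2^n I_r, 2^{-n} I_r),
\]
with $R_{1,\infty}^{(n)} = \lim_{z\to\infty} z(R^{(n)}(z)-I_{2r})$ and $M_1 = \lim_{z\to\infty} z(M(z)-I_{2r})$. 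The sandwich by $\diag(2^{\pm n} I_r, 2^{\mp n} I_r)$ preserves the diagonal blocks and multiplies the off-diagonal $(1,2)$ and $(2,1)$ blocks by $2^{-2n}$ and $2^{2n}$ respectively, so these factors cancel in the product $(Y_1^{(n)})_{12}(Y_1^{(n)})_{21}$. Since $M_1$ is independent of $n$, formula \eqref{Bnformula} follows from $B_n = (Y_1^{(n)})_{11} - (Y_1^{(n+1)})_{11}$.

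For \eqref{Cnformula} I would compute $(M_1)_{12}$ and $(M_1)_{21}$ directly from \eqref{Mdef}: the expansion $\gamma(z) = 1 - \frac{1}{2z} + \mathcal{O}(z^{-2})$ gives $M_0(z) = I_{2r} + \frac{1}{2z}\begin{pmatrix} 0_r & iI_r \\ -iI_r & 0_r \end{pmatrix} + \mathcal{O}(z^{-2})$, and because the outer factors $\diag(D(\infty), D(\infty)^{-\ast})$ and $\diag(D(z)^{-1}, D(\overline{z})^{\ast})$ in \eqref{Mdef} are block diagonal, their $1/z$-contributions only touch the diagonal blocks. A short conjugation yields $(M_1)_{12} = \frac{i}{2} D(\infty) D(\infty)^{\ast}$ and $(M_1)_{21} = -\frac{i}{2} D(\infty)^{-\ast} D(\infty)^{-1}$, and substituting into $C_n = (Y_1^{(n)})_{12}(Y_1^{(n)})_{21}$ yields \eqref{Cnformula}. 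The main bookkeeping obstacle is the cancellation of the $2^{\pm 2n}$ factors in the off-diagonal blocks; once this cancellation is verified, the conclusion follows from the entry-by-entry comparison above.
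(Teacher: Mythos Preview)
Your argument is correct and follows essentially the same route as the paper: both extract $B_n=(Y_1^{(n)})_{11}-(Y_1^{(n+1)})_{11}$ and $C_n=(Y_1^{(n)})_{12}(Y_1^{(n)})_{21}$ and then unwind $Y\to T\to S=RM$ near $z=\infty$, using the same computation of the off-diagonal blocks of $M_1$. The only difference is cosmetic: for $B_n$ the paper passes through the auxiliary entire matrix $U^{(n)}(z)=Y^{(n+1)}(z)Y^{(n)}(z)^{-1}$ (citing \cite{GIM11}), whereas you read off $Y_1^{(n)}$ directly from $Y^{(n)}=\diag(2^{-n},2^n)R^{(n)}M\diag(\varphi^n,\varphi^{-n})$, which is slightly more economical.
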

\begin{proof} 
The matrix valued function
	\begin{equation} \label{Undef} 
		U^{(n)}(z) = Y^{(n+1)}(z) Y^{(n)}(z)^{-1} 
	\end{equation} 
	is entire, since the jump matrix for $Y^{(n)}$ is indpendent of $n$, and it is equal to
	\begin{align} \nonumber U^{(n)}(z) & = \begin{pmatrix} z I_r & 0_r \\ 0_r & 0_r \end{pmatrix}	
		+ Y_1^{(n+1)} \begin{pmatrix} I_r & 0_r \\ 0_r & 0_r \end{pmatrix} - 
		\begin{pmatrix} 	I_r & 0_r \\ 0_r & 0_r \end{pmatrix} Y_1^{(n)} \\ \label{Unformula}
		& = \begin{pmatrix} zI_r + \left(Y_1^{(n+1)}\right)_{11}
			- \left( Y_1^{(n)} \right)_{11} & 
			- \left(Y_1^{(n)} \right)_{12} \\
			\left( Y_1^{(n+1)} \right)_{21} & 0_r \end{pmatrix}. 
	\end{align}
where $Y_1^{(n)}$ denotes the residue matrix in
\begin{equation} \label{Y1def} 
	Y^{(n)}(z) \begin{pmatrix} z^{-n} I_r & 0_r \\
		0_r & z^n I_r \end{pmatrix} =  
	I_{2r} + \frac{Y_1^{(n)}}{z} + 
	\mathcal{O}(z^{-2}) \end{equation}
	as $z \to \infty$
with a constant matrix $Y_1^{(n)}$.
Then by \cite[Theorem 2.16]{GIM11}
\begin{align} \label{BnwithU} 
		B_n & = zI_r - U_{11}^{(n)}(z) = \left(Y_1^{(n)}\right)_{11} - \left( Y_1^{(n+1)} \right)_{11}. \\ \label{CnwithY}
	C_n & = \left(Y_1^{(n)}\right)_{12} 
		\left(Y_1^{(n)}\right)_{21} \end{align}

The transformations $Y \mapsto T \mapsto S$ in \eqref{Tdef} and
\eqref{Sdef2}, and
the definition \eqref{Undef} show
that for $z$ outside of the lens
\[ U^{(n)}(z) = 
	\begin{pmatrix} 2^{-n-1} I_r & 0_r \\ 0_r & 2^{n+1} I_r \end{pmatrix}
	S^{(n+1)}(z) \begin{pmatrix} \varphi(z) I_r & 0_r \\ 0_r &
		\varphi(z)^{-1} I_r \end{pmatrix} S^{(n)}(z)^{-1}
	\begin{pmatrix} 2^n I_r & 0_r \\ 0_r & 2^{-n} I_r \end{pmatrix}.
	\] 
All factors in this product remain bounded as $z \to \infty$, except for
the diagonal matrix with $\varphi(z)$.
Since $\varphi(z) = 2z + \mathcal{O}(z^{-1})$ as $z \to \infty$
we obtain 
\begin{equation} \label{UnwithS} 
	U^{(n)}(z) = 2z 
\begin{pmatrix} 2^{-n-1} I_r & 0_r \\ 0_r & 2^{n+1} I_r \end{pmatrix}
S^{(n+1)}(z) \begin{pmatrix} I_r & 0_r \\ 0_r &
	0_r \end{pmatrix} S^{(n)}(z)^{-1}
\begin{pmatrix} 2^n I_r & 0_r \\ 0_r & 2^{-n} I_r \end{pmatrix}
+ \mathcal{O}(z^{-1}) \end{equation}
as $z \to \infty$.

We use the final transformation $S^{(n)} = R^{(n)} M$,
see \eqref{Rdef}, where we note that 
\begin{multline} \label{MinUasymp} 
		M(z) \begin{pmatrix} I_r & 0_r \\ 0_r & 0_r \end{pmatrix}
		M(z)^{-1}  \\
		= \begin{pmatrix} I_r & 0_r \\ 0_r & 0_r \end{pmatrix}
		+ \frac{1}{2iz} \begin{pmatrix} 0_r & D(\infty) D(\infty)^\ast \\ D(\infty)^{-\ast} D(\infty)^{-1} & 0_r	\end{pmatrix}  + 
		\mathcal{O}(z^{-2}) \quad
	\text{ as } z \to \infty, \end{multline}
which follows from direct calculation from \eqref{M0def}
and \eqref{Mdef}.
Using $S^{(n)} = R^{(n)} M$ and \eqref{MinUasymp} in \eqref{UnwithS}
we arrive at (where we also use $R^{(n+1)}(z) = I_{2r} + \mathcal{O}(z^{-1})$ and  
$R^{(n)}(z)^{-1} = I_{2r} +  \mathcal{O}(z^{-1})$)
\begin{multline} \label{UnwithR}
 U^{(n)}(z) = 2z 
\begin{pmatrix} 2^{-n-1} I_r & 0_r \\ 0_r & 2^{n+1} I_r \end{pmatrix}
R^{(n+1)}(z) \begin{pmatrix} I_r & 0_r \\ 0_r &
	0_r \end{pmatrix}  R^{(n)}(z)^{-1}
\begin{pmatrix} 2^n I_r & 0_r \\ 0_r & 2^{-n} I_r \end{pmatrix} \\
+ \frac{1}{2i} 
 \begin{pmatrix} 0_r & 4^{-n} D(\infty) D(\infty)^{\ast} \\ 
	4^{n+1} D(\infty)^{-\ast} D(\infty)^{-1}  & 0_r \end{pmatrix}  
  + \mathcal{O}(z^{-1}) \end{multline}
as $z \to \infty$.

Using \eqref{UnwithR} in the first identity of \eqref{BnwithU} we note
that the term with $D(\infty)$ does not contribute to $B_n$
and we get
\begin{equation} \label{BnwithR1} B_n = \lim_{z \to \infty}
	\left( zI_r - z R^{(n+1)}_{11}(z) \left(R^{(n)}(z)^{-1}\right)_{11} \right) 
\end{equation}
We use the Laurent expansion
$\ds R^{(n)}(z) = I_{2r} + \frac{R_1^{(n)}}{z} + \mathcal{O}(z^{-2})$, where we note  
$\ds  R^{(n)}(z)^{-1} = I_{2r} - \frac{R_1^{(n)}}{z} + \mathcal{O}(z^{-2})$, and we obtain from \eqref{BnwithR1} that
\[ B_n = \lim_{z \to \infty} 
	\left(z 
	\left( R_1^{(n)}(z) - R_1^{(n+1)}(z) \right)_{11} \right)
	\]
which is the same as \eqref{Bnformula}.

To obtain the formula \eqref{Cnformula} for $C_n$ we go back to \eqref{CnwithY}.
The transformations $Y \mapsto T \mapsto S = RM$ then show that
\begin{align*}
	Y_{12}^{(n)}(z) & =
	2^{-n} \left[ R_{11}^{(n)}(z) M_{12}(z) + R_{12}^{(n)}(z) M_{22}(z) \right] \varphi(z)^{-n} \\
	Y_{21}^{(n)}(z) & =
	2^{n} \left[ R_{21}^{(n)}(z) M_{11}(z) + R_{22}^{(n)}(z) M_{21}(z) \right] \varphi(z)^{n} 
\end{align*}
and by \eqref{Y1def}, since $\varphi(z) = 2z + \mathcal{O}(z^{-1})$
as $z \to \infty$,
\begin{equation} \label{Y1entrylimits}
\begin{aligned}
	\left(Y_1^{(n)}\right)_{12} & =
		\frac{1}{4^{n}} \lim_{z \to \infty} 
		z \left[  R_{11}^{(n)}(z) M_{12}(z) + R_{12}^{(n)}(z) M_{22}(z) \right], \\
	\left(Y_1^{(n)}\right)_{21} & =
	4^{n} \lim_{z \to \infty} 
	z \left[  R_{21}^{(n)}(z) M_{11}(z) + R_{22}^{(n)}(z) M_{21}(z) \right].
\end{aligned}
\end{equation}
Since $R^{(n)}(z) \to I_{2r}$ and $M(z) \to I_{2r}$ as $z \to \infty$
with
\begin{align*} \lim_{z\to \infty} z M_{12}(z) & =
	\frac{i}{2} D(\infty) D(\infty)^{\ast}, \\
 \lim_{z \to \infty} z M_{21}(z) & = 
	- \frac{i}{2} D(\infty)^{-\ast} D(\infty)^{-1}, \end{align*}
see the formulas \eqref{M0def} and \eqref{Mdef} for $M$,
we obtain \eqref{Cnformula} from \eqref{CnwithY}
and \eqref{Y1entrylimits}. 
\end{proof} 

We can now turn to the proof of Theorem \ref{theorem18}.

\begin{proof}[Proof of Theorem \ref{theorem18}.]
Due to the asymptotic expansion \eqref{Rexpansion} for 
$R = R^{(n)}$, and the formulas \eqref{Bnformula}
and \eqref{Cnformula} we see that $B_n$ and $C_n$
have an asymptotic expansion in inverse powers of $n$.

Because of \eqref{Rexpansion} we have  
\[ z\left(R^{(n)}(z) - R^{(n+1)}(z)\right)
	\sim \sum_{k=1}^{\infty} 
		 \left(\frac{1}{n^k} - \frac{1}{(n+1)^k}\right) R_k(z)
\] 
and the $k$th term in this series is $\mathcal{O}(n^{-k-1})$
as $k \to \infty$, uniformly for $z$ in a neighborhood of $\infty$. Keeping only the first term we have
\[ z(R^{(n)}(z) - R^{(n+1)}(z))
	 = \frac{R_1(z)}{n^2} + \mathcal{O}(n^{-3}). \] 
From the explicit form \eqref{eq:solRHR1} we then obtain
\begin{align*} 
	\lim_{z \to \infty} z \left(R^{(n)}(z) - R^{(n+1)}(z)\right)
	= \frac{A^{(1)} + B^{(1)}}{n^2} + \mathcal{O}(n^{-3})
	\end{align*}
as $n \to  \infty$. We thus conclude from \eqref{Bnformula}
\[ B_n = \frac{A_{11}^{(1)} + B^{(1)}_{11}}{n^2} + \mathcal{O}(n^{-3}).\]
The explicit formulas \eqref{eq:A1} and \eqref{eq:B1}
for $A^{(1)}$ and $B^{(1)}$ then lead
to the formula \eqref{b2def} for
$\mathcal{B}_2 = A_{11}^{(1)} + B_{11}^{(1)}$.

\medskip

Now we turn to the recurrence coefficient $C_n$.
From \eqref{Cnformula} with $\lim\limits_{z \to \infty} 
	z R_{12}^{(n)}(z) = \mathcal{O}(n^{-1})$,
	$\lim\limits_{z \to \infty} 
	z R_{21}^{(n)}(z) = \mathcal{O}(n^{-1})$,
	we find
	\begin{multline} \label{Cnexpansion} 
		C_n = \frac{1}{4} I_r 
		- \frac{i}{2} 
		\left(\lim_{z \to \infty}
			z R_{12}^{(n)}(z) \right) 
			D(\infty)^{-\ast} D(\infty)^{-1} \\
		+ \frac{i}{2} D(\infty) D(\infty)^\ast
		\left(\lim_{z \to \infty} z R_{21}^{(n)}(z) \right) +
			\mathcal{O}(n^{-2})   \end{multline}
	as $n \to \infty$. We have the leading term $\frac{1}{4} I_r$
	for $C_n$ and to prove \eqref{eq:BnCnasymp} it remains
	to show that the $n^{-1}$ term in the expansion
	for $C_n$ vanishes.

By \eqref{Rexpansion} and \eqref{Bnformula} we have 
\[ \lim_{z \to \infty} z R_{12}^{(n)}(z)
	= \frac{1}{n} \lim_{z\to\infty} \left(zR_1(z) \right)_{12}
	+ \mathcal{O}(n^{-2})
	= \frac{1}{n} \left(A^{(1)}_{12} + B^{(1)}_{12} \right)
	+ \mathcal{O}(n^{-2}), 
\]
and similarly for $R_{21}^{(n)}$. 
Thus by \eqref{Cnexpansion}, $C_n = \frac{1}{4} I_r + 
\frac{\mathcal{C}_1}{n} + \mathcal{O}(n^{-2})$ as $n \to \infty$
with 
\begin{align} \label{C1def} 
	\mathcal{C}_1  =  
- \frac{i}{2}  \left(A^{(1)}_{12} + B^{(1)}_{12} \right)
D(\infty)^{-\ast} D(\infty)^{-1} 
+ \frac{i}{2} D(\infty) D(\infty)^\ast
 \left(A^{(1)}_{21} + B^{(1)}_{21} \right).   \end{align}

The formulas \eqref{eq:A1} and \eqref{eq:B1} can be written
as 
\begin{multline} \begin{pmatrix} D(\infty)^{-1} & 0_r \\ 0_r & D(\infty)^{\ast} \end{pmatrix} A^{(1)} \begin{pmatrix} D(\infty) & 0_r \\ 0_r & D(\infty)^{-\ast} \end{pmatrix} \\
	=  \begin{pmatrix}
		-X & i X \\ iX & X \end{pmatrix},
	\qquad X = \frac{1}{4} U_1  \diag\left(\alpha_j^2-\frac{1}{4}\right) U_1^{-1},  \end{multline}
and
\begin{multline} \begin{pmatrix} D(\infty)^{-1} & 0_r \\ 0_r & D(\infty)^{\ast} \end{pmatrix} B^{(1)} \begin{pmatrix} D(\infty) & 0_r \\ 0_r & D(\infty)^{-\ast} \end{pmatrix} \\ 
	=  \begin{pmatrix}
		\widetilde{X} & i \widetilde{X} \\ i\widetilde{X} & -\widetilde{X} \end{pmatrix},
	\qquad \widetilde{X} = \frac{1}{4} U_{-1}  \diag\left(\beta_j^2-\frac{1}{4}\right) U_{-1}^{-1}.  
\end{multline}
We observe that in both these formulas the off-diagonal 
blocks agree, which means that 
\begin{align*} D(\infty)^{-1} A^{(1)}_{12} D(\infty)^{-\ast} & = 
	D(\infty)^\ast A^{(1)}_{21} D(\infty), \\
 D(\infty)^{-1} B^{(1)}_{12} D(\infty)^{-\ast} & =  
	D(\infty)^\ast B^{(1)}_{21} D(\infty). \end{align*}
These two identities and \eqref{C1def} show
that indeed $\mathcal{C}_1 = 0_r$ and the proof is complete.
\end{proof}

\section*{Acknowledgements}
A. D. acknowledges financial support from Direcci\'on General de Investigaci\'on e Innovaci\'on, Consejer\'ia de Educaci\'on e Investigaci\'on of Comunidad de Madrid (Spain), and Universidad de Alcal\'a under grant CM/JIN/2021-014, and Comunidad de Madrid (Spain) under the Multiannual Agreement with UC3M in the line of Excellence of University Professors (EPUC3M23), and in the context of the V PRICIT (Regional Programme of Research and Technological Innovation). Research supported by Grant PID2021-123969NB-I00, funded by MCIN/AEI/ 10.13039/501100011033, and by grant PID2021-122154NB-I00 from Spanish Agencia Estatal de Investigaci\'on.

\medskip

A. D. acknowledges financial support and hospitality from IMAPP, Radboud Universiteit Nijmegen, and in particular Prof. Erik Koelink, during a visit to Nijmegen in June 2022.

\medskip

A.B.J.K. is supported by long term structural funding-Methusalem grant of the
Flemish Government, and by FWO Flanders projects EOS 30889451 and G.0910.20.

\medskip

P. R. acknowledges the support of Erasmus+ travel grant and  SeCyT UNC.
%%%%%%%%%%%%%%%%%%%%%%%%%

\section*{Competing interests}
 The authors have no competing interests to declare that
are relevant to the content of this article.


\begin{thebibliography}{99}

\bibitem{AKR17}
	N. Aldenhoven, E. Koelink, and P. Rom\'an, 
	Matrix-valued orthogonal polynomials related to the quantum analogue of $(\textrm{SU(2)}\times \textrm{SU}(2),\diag)$,	
	The Ramanujan Journal 43 (2017), 243--311.

\bibitem{AN83}
	A.I. Aptekarev and E. Nikishin,
	The scattering problem for a discrete Sturm-Liouville problem,
	Mat. Sb. 121 (1983), 327--358.
	
\bibitem{BD19}
	T. Berggren and M. Duits,
	Correlation functions for determinantal processes defined by infinite block Toeplitz minors,
	Adv. Math. 356 (2019), 106766.
	
\bibitem{BI99}
	P. Bleher and A. Its,
	Semiclassical asymptotics of orthogonal polynomials, Riemann-Hilbert problem, and universality in the matrix model, 
	Ann. Math. 150 (1999), 185--266. 

\bibitem{BFM21}
	A. Branquinho, A. Foulqui\'e, and M. Ma\~nas,
	Matrix biorthogonal polynomials: eigenvalue problems and non-Abelian discrete Painlevé equations: a Riemann-Hilbert problem perspective,
	J. Math. Anal. Appl. 494 (2021), Paper No. 124605, 36 pp. 
	
\bibitem{BFFM22}
	A. Branquinho, A. Foulqui\'e, A. Fradi, and M. Ma\~nas,
	Matrix Jacobi Biorthogonal Polynomials via Riemann-Hilbert problem,
	%https://arxiv.org/abs/2209.15372.
	to appear in Proc. Amer. Math. Soc.
	
\bibitem{CdI2014}
    M. Cafasso and M. D. de la Iglesia,
	Non-commutative Painlev\'e equations and Hermite-type matrix orthogonal polynomials, 
	Comm. Math. Phys. 326 (2014), 559--583.
	
\bibitem{CdI2018}	
    M. Cafasso and M. D. de la Iglesia,
	The Toda and Painlev\'e systems associated with semiclassical matrix-valued orthogonal polynomials of Laguerre type, 
	SIGMA 14 (2018), 076, 17 pages.
	
\bibitem{CY22}
	W. R. Casper and M. Yakimov,  
	The matrix Bochner problem, 
	Amer. J. Math. 144 (2022), 1009--1065.
	%(to appear), preprint arXiv:1803.04405.
	
\bibitem{CM12}
	G. A. Cassatella-Contra and M. Ma\~nas,
	Riemann-Hilbert problems, matrix orthogonal polynomials and
	discrete matrix equations with singularity confinement,
	Stud. Appl. Math. 128 (2012), 252--274.
	
\bibitem{Ch21}
    C. Charlier,
    Doubly periodic lozenge tilings of a hexagon and matrix valued orthogonal polynomials,
    Stud. Appl. Math. 146, 1 (2021), 3--80.
    
\bibitem{DPS08}
	D. Damanik, A. Pushnitski, and B. Simon,
	The analytic theory of matrix orthogonal polynomials,
	Surv. Approx. Theory 4 (2008), 1--85.
	
\bibitem{DZ93}
	P. Deift and X. Zhou,
	A steepest descent method for oscillatory Riemann-Hilbert problems. Asymptotics for the MKdV equation, 
	Ann. Math.  137 (1993), 295--368.
	
\bibitem{DKMVZ99}
	P. Deift, T. Kriecherbauer, K.T-R McLaughlin, S. Venakides, and X. Zhou,
	Uniform asymptotics for polynomials orthogonal with respect to varying exponential weights and applications to universality questions in random matrix theory, 
	Comm. Pure Appl. Math. 52 (1999), 1335--1425.
	
\bibitem{DGK78}
	P. Delsarte, Y.V. Genin, and Y.G. Kamp,
	Orthogonal polynomial matrices on the unit circle,
	IEEE Transactions on Circuits and Systems 25 (1978), 149--160.
	
\bibitem{DLMF} 
NIST Digital Library of Mathematical Functions. http://dlmf.nist.gov/, Release 1.1.5 of 2022-03-15. F. W. J. Olver, A. B. Olde Daalhuis, D. W. Lozier, B. I. Schneider, R. F. Boisvert, C. W. Clark, B. R. Miller, B. V. Saunders, H. S. Cohl, and M. A. McClain, eds.

\bibitem{DK21}
    M. Duits and A. B. J. Kuijlaars,
    The two periodic Aztec diamond and matrix valued orthogonal polynomials, 
    J. Eur. Math. Soc. 23 (2021), 1075--1131

\bibitem{DG05a}
	A. Dur\'an and F. A. Gr\"{u}nbaum,
	A survey on orthogonal matrix polynomials satisfying second order differential equations,
	J. Comput. Appl. Math. 178 (2005), 169--190.
	
\bibitem{DG05b}
	A. Dur\'an and F. A. Gr\"{u}nbaum,
	A characterization for a class of weight matrices with orthogonal matrix polynomials satisfying second-Order differential equations, 
	Int. Math. Res. Not. 2005 (2005), no. 23, 1371--1390.

\bibitem{DL96}
    A. Dur\'an and P.  L\'opez-Rodr\'iguez,
    Orthogonal matrix polynomials: zeros and Blumenthal's theorem, 
    J. Approx. Theory  1 (1996), 96--118

\bibitem{FIK92}
	A. S. Fokas, A.R. Its, and A.V. Kitaev,
	The isomonodromy approach to matrix models in 2D quantum gravity,
	Comm. Math. Phys. 147 (1992), 395--430.
	
\bibitem{GK21}
    A. Groot and A. B. J. Kuijlaars, 
    Matrix-valued orthogonal polynomials related to hexagon tilings, 
    J. Approx. Theory 270 (2021), 105619, 36 pages.
	
\bibitem{GIM11}
	F. A. Gr\"unbaum, M. D. de la Iglesia, and A. Mart\'inez-Finkelshtein,
	Properties of matrix orthogonal polynomials via their Riemann-Hilbert characterization,
	SIGMA Symmetry Integrability Geom. Methods Appl. 7 (2011), Paper 098, 31 pp.
	
\bibitem{GPT02}
	F.~A. Gr{\"u}nbaum, I.~Pacharoni, and J.~Tirao,
    Matrix valued spherical functions associated to the complex projective plane,
    J. Funct. Anal. 188 (2002), 350--441. 

\bibitem{HL58}
	H. Helson and D. Lowdenslager,
	Prediction theory and Fourier series in several variables,
	Acta Math. 99 (1958), 165--202. 


\bibitem{KLS2010} 
	R. Koekoek, P. A. Lesky, and R. F. Swarttouw,
	Hypergeometric Orthogonal Polynomials and Their $q$-Analogues,
	Springer Monographs in Mathematics, Springer-Verlag, Berlin, 2010.


%online at
%\texttt{http://aw.twi.tudelft.nl/\~{}koekoek/askey.html}, Report 98-17,
%Technical University Delft, 1998.

	\bibitem{KdlRR17}
	E. Koelink, A. M. de~los R\'ios, and P. Rom{\'a}n,
	Matrix-valued Gegenbauer-type polynomials,
	Constr. Approx. 46 (2017), 459--487. 
	
	\bibitem{KvPR12}
	E. Koelink, M. van Pruijssen, and P. Rom{\'a}n,
	Matrix-valued orthogonal polynomials related to
	  $(\mathrm{SU}(2)\times\mathrm{SU}(2),\diag)$.
	Int. Math. Res. Not. IMRN, 2012 (2012) no. 24, 5673--5730.
	
	\bibitem{KvPR13}
	E. Koelink, M. van Pruijssen, and P. Rom{\'a}n,
	Matrix-valued orthogonal polynomials related to
	  $(\mathrm{SU}(2)\times\mathrm{SU}(2),\diag)$, {II}.
	Publ. Res. Inst. Math. Sci. 49 (2013), no. 2, 271--312.

\bibitem{Ko10}
	R. Kozhan,
	Szeg\H{o} asymptotics for matrix-valued measures with countably many bound states,
	J. Approx. Theory, 162 (2010), 1211--1224.
	
\bibitem{Kui03}
	A.B.J. Kuijlaars,
	Riemann-Hilbert analysis for orthogonal polynomials,
	in: Orthogonal Polynomials and Special Functions
	(E. Koelink and W. Van Assche eds.), 
	Lecture Notes in Mathematics 1817, Springer-Berlin, 2003,
	pp. 167--210.
	
\bibitem{KMVV04}
	A.B.J. Kuijlaars, K.T.-R. McLaughlin, W. Van Assche, and M. Vanlessen,
	The Riemann-Hilbert approach to strong asymptotics for orthogonal polynomials on $[-1,1]$,
	Adv. Math. 188 (2004), 337--398.
		
\bibitem{PR08}
	I. Pacharoni and P. Román,
	\newblock A sequence of matrix valued orthogonal polynomials associated to spherical functions,
	Constr. Approx. 28 (2008),  28, 127--147.
	
\bibitem{PT07}
 	I. Pacharoni and J. Tirao,
 	Matrix valued orthogonal polynomials arising from the complex projective space,
 	Constr. Approx. 5 (2007), 177--192.
 	
  \bibitem{PT13}
	I. Pacharoni and J. Tirao,
	One-step spherical functions of the pair $(\mathrm{SU}(n+1), \mathrm{U}(n))$,
	Lie Groups: Structure, Actions, and Representations, Progr. Math. Vol. 306, Birkhäuser/Springer, New York, 309--35, 2013.
	
\bibitem{vPR16} 
	M. van Pruijssen and P. Rom\'an.
	Deformation of matrix-valued orthogonal polynomials, 
	preprint arXiv:1610.01257.
  
\bibitem{RT06}
	P. Rom\'an and J. Tirao, 
	Spherical Functions, the complex hyperbolic plane and the hypergeometric operator,
  Internat. J. Math. 17, 10 (2006), 1151--1173.
  
\bibitem{Simon15}
	B. Simon,
	Operator Theory. A comprehensive Course in Analysis. Part IV. 
	Amer. Math. Soc., Providence, RI, 2015.
    
	\bibitem{WM57}
	N. Wiener and P. Masani,
	The prediction theory of multivariate stochastic processes. I. The regularity condition,
	 Acta Math. 98 (1957), 111--150,
	
	\bibitem{Wim86}
	H. K. Wimmer,
	Rellich’s perturbation theorem on Hermitian matrices of holomorphic functions,
	J. Math. Anal. Appl. 114 (1986), 52--54.
 	
	\bibitem{YK78} 
	D. Youla and N. Kazanjian,
	Bauer-type factorization of positive matrices and the theory of matrix polynomials orthogonal on the unit circle,
	IEEE Transactions on Circuits and Systems 25 (1978) 57--69.
\end{thebibliography}
\end{document}